\title[Algebraically generated groups and their Lie algebras]{Algebraically generated groups \\ and their Lie algebras}
\author{Hanspeter Kraft and Mikhail Zaidenberg}
\address{Departement Mathematik und Informatik,
\newline\indent Universit\"at Basel,  Spiegelgasse 1, CH-4051 Basel, Switzerland}
\email{hanspeter.kraft@unibas.ch}
\address{Univ. Grenoble Alpes, CNRS, IF, 38000 Grenoble, France}
\email{mikhail.zaidenberg@univ-grenoble-alpes.fr}
\keywords{ind-group, automorphism group, Lie algebra, algebraically generated subgroup, group action}
\subjclass{14J50 (primary), 14L40, 17B66, 22E65 (secondary)}
\thanks{}
\newtheorem{thm}{Theorem}[subsection]
\newtheorem*{thm*}{Theorem}
\newtheorem*{thmA}{Theorem A}
\newtheorem*{thmB}{Theorem B}
\newtheorem*{thmC}{Theorem C}
\newtheorem*{thmD}{Theorem D}
\newtheorem*{thmE}{Theorem E}
\newtheorem{prop}[thm]{Proposition}
\newtheorem{lem}[thm]{Lemma}
\newtheorem{cor}[thm]{Corollary}
\newtheorem*{cor*}{Corollary}
\newtheorem{que}{Question}
\theoremstyle{definition}
\newtheorem{defn}[thm]{Definition}
\newtheorem{exa}[thm]{Example}
\theoremstyle{remark}
\newtheorem*{rem*}{Remark}
\newtheorem{rem}[thm]{Remark}
\newtheorem*{rems*}{Remarks}
\newtheorem{rems}[thm]{Remarks}
\newcommand{\name}[1]{{\rm\textsc{#1\/}}}
\newcommand{\NN}{{\mathbb N}}
\newcommand{\ZZ}{{\mathbb Z}}
\newcommand{\CC}{{\mathbb C}}
\newcommand{\QQ}{{\mathbb Q}}
\newcommand{\kk}{{\Bbbk}}
\newcommand{\GG}{{\mathbb G}}
\newcommand{\kst}{{\kk^*}}
\renewcommand{\AA}{{\mathbb A}}
\newcommand{\An}{{\mathbb A}^{n}}
\newcommand{\Atwo}{{\mathbb A}^{2}}
\newcommand{\Aone}{{\mathbb A}^{1}}
\newcommand{\SSS}{\mathcal S}
\newcommand{\JJJ}{\mathcal J}
\newcommand{\FFF}{\mathcal F}
\newcommand{\KKK}{\mathcal{K}}
\newcommand{\TTT}{\mathcal{T}}
\newcommand{\simto}{\xrightarrow{\sim}}
\newcommand{\be}{\begin{enumerate}}
\newcommand{\ee}{\end{enumerate}}
\newcommand{\beq}{\begin{equation}}
\newcommand{\eeq}{\end{equation}}
\newcommand{\bullitem}{\item[$\bullet$]}
\newcommand{\G}{\mathfrak G}
\newcommand{\F}{\mathfrak F}
\newcommand{\V}{\mathfrak V}
\newcommand{\X}{\mathfrak X}
\renewcommand{\H}{\mathfrak H}
\newcommand{\A}{\mathfrak A}
\newcommand{\J}{\mathfrak J}
\newcommand{\B}{\mathfrak B}
\newcommand{\into}{\hookrightarrow}
\newcommand{\p}{\partial}
\newcommand{\ps}{\par\smallskip}
\newcommand{\Ga}{\GG_a}
\newcommand{\Gm}{\GG_m}
\newcommand{\tp}[2]{\texorpdfstring{#1}{#2}}
\DeclareMathOperator{\id}{id}
\DeclareMathOperator{\Aut}{Aut}
\DeclareMathOperator{\SAut}{SAut}
\DeclareMathOperator{\Lie}{Lie}
\DeclareMathOperator{\VF}{Vec}
\DeclareMathOperator{\Div}{div}
\DeclareMathOperator{\ad}{ad}
\DeclareMathOperator{\GL}{GL}
\DeclareMathOperator{\SL}{SL}
\DeclareMathOperator{\SLtwo}{SL_{2}}
\DeclareMathOperator{\Norm}{Norm}
\DeclareMathOperator{\Ad}{Ad}
\DeclareMathOperator{\Int}{Int}
\DeclareMathOperator{\Der}{Der}
\DeclareMathOperator{\Aff}{Aff}
\DeclareMathOperator{\Jac}{jac}
\DeclareMathOperator{\pr}{pr}
\DeclareMathOperator{\Spec}{Spec}
\DeclareMathOperator{\ev}{ev}
\DeclareMathOperator{\Jonq}{Jonq}
\DeclareMathOperator{\Cent}{Cent}
\DeclareMathOperator{\cent}{\mathfrak{cent}}
\DeclareMathOperator{\Ll}{\mathcal L}
\DeclareMathOperator{\Trans}{Trans}
\DeclareMathOperator{\Span}{span}
\DeclareMathOperator{\rad}{rad}
\DeclareMathOperator{\gl}{\mathfrak{gl}}
\DeclareMathOperator{\Grass}{Grass}
\newcommand{\reg}{\text{\it reg}}
\renewcommand{\subset}{\subseteq}
\renewcommand{\supset}{\supseteq}
\newcommand{\bg}{\mathbf{g}}
\newcommand{\bu}{\mathbf{u}}
\newcommand{\bv}{\mathbf{v}}
\newcommand{\OOO}{\mathcal O}
\newcommand{\ii}{\boldsymbol i}
\newcommand{\quot}{/\!\!/}
\renewcommand{\phi}{\varphi}
\newcommand{\bb}{\mathfrak b}
\newcommand{\nn}{\mathfrak n}
\begin{document}
{\small
\begin{abstract}
The automorphism group $\Aut(X)$ of an affine variety $X$ is an ind-group. Its 
Lie algebra is canonically embedded into the Lie algebra $\VF(X)$ of vector fields on $X$.
We study the relations between subgroups of $\Aut(X)$ and Lie subalgebras of $\VF(X)$.

We show that a subgroup $G\subset \Aut(X)$ generated by  a family of
connected algebraic subgroups $G_i$ of $\Aut(X)$
is algebraic if and only if the Lie algebras $\Lie G_i \subset \VF(X)$ generate a finite dimensional 
Lie subalgebra of $\VF(X)$.

Extending  a result by \name{Cohen-Draisma} 
\cite{CoDr2003From-Lie-algebras-} 
we prove that a locally finite Lie algebra $L \subset \VF(X)$ generated by locally nilpotent vector fields is algebraic,  i.e. $L = \Lie G$ for an algebraic subgroup  $G \subset \Aut(X)$.

Along the same lines we prove that if a subgroup $G \subset \Aut(X)$ generated by finitely many connected  algebraic groups is solvable, then it is an algebraic group. 

We also show that a unipotent algebraic subgroup $U \subset \Aut(X)$ has derived length $\leq \dim X$. This result is based on the following triangulation theorem: 
{\it Every unipotent algebraic subgroup of $\Aut(\An)$ with a dense orbit in $\An$ is conjugate to a subgroup of the de Jonqui\`eres subgroup.}

Furthermore, we give an example of a free subgroup $F\subset \Aut(\Atwo)$ 
generated by two algebraic
elements such that the Zariski closure $\overline{F}$ is a free product of two nested commutative closed unipotent  ind-subgroups. 

To any affine ind-group $\G$ one can associate a canonical ideal  $L_\G \subset \Lie\G$. It is linearly generated by the tangent spaces $T_e X$ for all algebraic subsets $X \subset \G$ which are smooth in $e$. It has the important property that for a surjective homomorphism $\phi\colon \G \to \H$ the induced homomorphism $d\phi_e\colon L_\G \to L_\H$ is surjective as well. Moreover, if $\H \subset \G$ is a subnormal closed ind-subgroup of finite codimension, then $L_\H$ has finite codimension in $L_\G$.
\end{abstract}
}

\maketitle
{\small
\noindent
%
{\small
\tableofcontents}

\section{Introduction and main results}
The introduction contains the necessary preliminaries and definitions, gives some background material and describes the main results of the paper. The details then follow in the subsequent sections.
\subsection{Notation}\label{notation:sec}
Our base field  $\kk$ is algebraically closed and of characteristic zero. $\An$ stands for affine $n$-space over $\kk$,  and $\Ga := \kk^+$ and $\Gm:=\kk^*:=\kk\setminus\{0\}$ denote the {\it additive} and the {\it multiplicative} groups of $\kk$. An algebraic group is always an {\it affine algebraic group}, and 
a {\it $G$-variety $X$} is an affine variety with an action of $G$
such that the corresponding map $G \times X \to X$ is a morphism. 

For every 
$G$-variety $X$ there is a canonical \mbox{(anti-)} homomorphism of Lie algebras $\xi\colon \Lie G \to \VF(X)$,
where $\VF(X)$ stands for the Lie algebra of (algebraic) vector fields on $X$. The construction of this 
anti-homomorphism goes as follows. For any $x \in X$ let $\theta_x \colon G \to X$ be the orbit map $g \mapsto gx$, and denote by 
\[\tag{$*$}
d\theta_x\colon \Lie G:=T_eG  \to T_xX
\]
its differential in $e \in G$.  If $A \in \Lie G$, then the corresponding vector field $\xi_A$ is given by 
$\xi_A(x):= d\theta_x(A)\in T_xX$. If $L \subset \VF(X)$ denotes the image of $\Lie G$, then one has  $T_xGx = L(x)$ for any $x \in X$. 
Moreover, the differential $(d\theta_x)_g\colon T_gG \to T_{gx}Gx$ of the orbit map $\theta_x$ is surjective in every $g \in G$.
(For a reference one might look at \cite[Appendix A.4]{Kr2016Algebraic-Transfor}.)

For a $G$-variety $X$ we have linear actions of  $G$ on the coordinate ring $\OOO(X)$ and on the vector fields $\VF(X)$,
and these representations are {\it locally finite and rational}, i.e. every element is contained in
a finite dimensional $G$-invariant subspace and the linear action of $G$ on any finite dimensional $G$-invariant subspace is rational. Moreover, the homomorphism $\xi\colon \Lie G \to \VF(X)$ is $G$-equivariant, see \cite[Sect.~7.3]{FuKr2018On-the-geometry-of}.

\ps
\subsection{Automorphism groups and vector fields}\label{VF.subsec}
For an affine variety $X$ the group of regular automorphisms $\Aut(X)$ is an {\it affine ind-group}. We refer to the paper \cite{FuKr2018On-the-geometry-of} 
for an introduction to ind-varieties and ind-groups and for basic concepts,  cf. \cite{Sh1981On-some-infinite-d, Ku2002Kac-Moody-groups-t}. 
For an affine ind-group $\G$ the tangent space $T_e\G$ carries a natural structure of a Lie algebra which will be denoted by $\Lie \G$. In case of 
$\Aut(X)$ there is a canonical embedding $\xi\colon\Lie\Aut(X) \into \VF(X)$ which is an anti-homomorphism of Lie algebras. It is constructed in a similar way as explained above for an algebraic group $G$,  see \cite[Proposition~7.2.4]{FuKr2018On-the-geometry-of}. It has the following property:

{\it If $A \in \Lie \Aut(X)$ and $\xi_A\in\VF(X)$ the corresponding vector field, then one has, for every $x \in X$,
$$
\xi_A(x) = d(\theta_x)_e(A) \ \text{ where\ } \ \theta_x\colon \Aut(X) \to X \text{ is the orbit map in }x.
$$}
\par\noindent
The group $\G$ acts on $\Lie \G$ by the adjoint action: $\Ad(g) := d(\Int g)_e$. For $\G = \Aut(X)$ this action is induced by the action on all vector fields $\VF(X)$ which is given by 
$$
\Ad g(\delta)(gx) := d\mu_g\,\delta(x), \ \text{ i.e. }\  \Ad g(\delta) = d\mu_g \circ \delta \circ \mu_{g^{-1}}
$$
where $\mu_g\colon X \simto X$ denotes the multiplication map $x \mapsto gx$. Here we consider a vector field $\delta \in \VF(X)$ as a section of the ``tangent bundle'' $\TTT_X \to X$, and then $\Ad g(\delta)$ is defined by the following diagram:
$$
\begin{CD}
\TTT_X @<{d\mu_g}<< \TTT_X \\
@A{\Ad g(\delta)}AA @AA{\delta}A \\
X @>{\mu_{g}^{-1}}>>X
\end{CD}
$$
It is easy to see that the embedding $\xi\colon \Lie\Aut(X) \into \VF(X)$ defined above is $\Aut(X)$-equivariant:
$$
\xi_{\Ad(g) A} = \Ad g(\xi_A) \ \text{ or } \ \xi_{\Ad(g)A}(gx) = d\mu_g \, \xi_A(x).
$$
Ind-varieties and ind-groups carry a natural topology, 
called {\it Zariski-topology} or {\it ind-topology}. All topological notation in this paper will be with respect to this topology. 
\par\smallskip
A subset $Y \subset \V$ of an ind-variety $\V=\bigcup_k\V_k$ is called {\it bounded} if it is contained in $\V_k$ for some $k$. It is called {\it algebraic} if it is bounded and locally closed.
If $X$ is a $G$-variety, then the canonical map $G \to \Aut(X)$ is a homomorphism of ind-groups, and the image of $G$ is closed and algebraic, 
see  \cite[Proposition~2.7.1]{FuKr2018On-the-geometry-of}. In particular, every algebraic subgroup $G \subset \Aut(X)$ is closed and thus a linear algebraic group. It follows that $\Lie G \subset \Lie\Aut(X)$ is a Lie subalgebra and that $\Lie G$ is canonically embedded into the vector fields $\VF(X)$. 

\ps
\noindent
{\bf Convention.} In the following our ind-groups and ind-varieties will always be {\it affine\/} ind-groups and {\it affine\/} ind-varieties. By abuse of notation, we will constantly identify, for an algebraic group $G \subset \Aut(X)$, the Lie algebra $\Lie G$ with its image $\xi(\Lie G) \subset \VF(X)$, although the map $\xi$ is an anti-homomorphism.
\ps
\subsection{Algebraically generated groups}\label{alg-gen-groups:sec} 
A first result showing a very strong relation between the Lie algebra of an ind-group and the group itself is the following, see \cite[Proposition~2.2.1(3) and (4)]{FuKr2018On-the-geometry-of}.
\begin{prop}
Let $\G$ be an ind-group. Then the connected component $\G^\circ$ is an algebraic group if and only if $\Lie \G$ is finite dimensional.
\end{prop}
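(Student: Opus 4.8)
I would prove the two implications separately. For the easy one, ``$\G^\circ$ an algebraic group $\Rightarrow$ $\dim\Lie\G<\infty$'': the identity component $\G^\circ$ is open and closed in $\G$ (part of the basic structure theory of ind-groups, cf.\ \cite{FuKr2018On-the-geometry-of}), so the closed immersion $\G^\circ\hookrightarrow\G$ identifies $\Lie\G^\circ$ with $\Lie\G$; if $\G^\circ$ is an algebraic group then $\Lie\G^\circ=T_e\G^\circ$ is the Zariski tangent space of a variety at a point, hence finite-dimensional. For the converse, replacing $\G$ by $\G^\circ$ we may assume $\G$ connected, and it then suffices to prove that $\G$ is algebraic, i.e.\ contained in a single term $\G_k$ of its filtration; such a $\G$ is automatically an algebraic group, the group operations being morphisms. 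Put $n:=\dim\Lie\G<\infty$. Everything will rest on one observation, the \emph{dimension bound}: any irreducible algebraic subset $Y\subseteq\G$ with $e\in Y$ is a closed subvariety of some $\G_k$, so $\dim Y\le\dim T_eY\le\dim T_e\G=n$.

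The first step is a lemma: the closure of the subgroup generated by a suitable irreducible set is a connected algebraic subgroup. Precisely, if $Y\ni e$ is irreducible, algebraic and symmetric ($Y=Y^{-1}$), I would argue that the closures $\overline{Y^{(m)}}$ of the $m$-fold products $Y\cdots Y$ form an ascending chain of \emph{irreducible} closed subsets — irreducible since $Y^{(m)}$ is the image of the irreducible variety $Y\times\cdots\times Y$ under iterated multiplication — all of dimension $\le n$ by the dimension bound. Such a chain stabilizes; if $H_Y$ denotes its stable value, then $e\in H_Y$, $H_Y^{-1}=H_Y$, and $H_Y\cdot H_Y\subseteq H_Y$ (because $\overline{Y^{(m_0)}\cdot Y^{(m_0)}}=\overline{Y^{(2m_0)}}=H_Y$ once the chain has stabilized at some $m_0$), so $H_Y$ is a closed subgroup with irreducible underlying space, i.e.\ a connected algebraic subgroup — indeed the closure $\overline{\langle Y\rangle}$.

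Next, among all connected algebraic subgroups of $\G$, which by the dimension bound have dimension $\le n$, choose one, $H$, of maximal dimension; I claim $H=\G$. If not, pick $g\in\G\setminus H$. A connected ind-group is curve-connected — any two of its points are joined by a chain of irreducible curves, a fact I would take from \cite{FuKr2018On-the-geometry-of} (it also follows from the classical statement for varieties together with the openness of the identity components of the $\G_k$) — so, since $e\in H$ and $g\notin H$, some irreducible curve $C$ meets $H$ without lying in it; translating on the right by a point of $C\cap H\subseteq H$ we may assume $e\in C$, still with $C\not\subseteq H$. Then $Y:=\overline{H\,C\,H\,C^{-1}\,H}$ is the closure of the image of the irreducible variety $H\times C\times H\times C\times H$, hence irreducible and algebraic; it contains $e$, $H$ and $C$ (so $Y\not\subseteq H$), and $Y=Y^{-1}$ by inspection. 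By the lemma, $H_Y$ is a connected algebraic subgroup with $H\subsetneq H_Y$, so $\dim H<\dim H_Y\le n$, contradicting maximality. Hence $\G=H$ is an algebraic group.

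The main obstacle is the lemma in the second paragraph: one wants the subgroup $\langle Y\rangle$, a priori merely a countable increasing union of algebraic sets, to be algebraic. It works precisely because one can keep the approximating sets irreducible, so that the finiteness of $\Lie\G$ enters as a uniform bound on their dimensions and forces the ascending chain of closures to terminate after finitely many steps; the somewhat artificial word $HCHC^{-1}H$ in the third paragraph is exactly the device that enlarges $H$ by $C$ while staying simultaneously irreducible and symmetric. The remaining ingredients — openness of $\G^\circ$ and curve-connectedness of connected ind-groups — are standard, and I would simply cite \cite{FuKr2018On-the-geometry-of} for them.
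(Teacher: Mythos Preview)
The paper does not give its own proof of this proposition; it is quoted from \cite[Proposition~2.2.1(3) and (4)]{FuKr2018On-the-geometry-of}, so there is no in-paper argument to compare against directly. Your argument is correct and self-contained.

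It is still worth comparing your approach with the proof the paper \emph{does} supply for the closely related \name{Ramanujam} alternative (Proposition~\ref{Ramanujam.prop}), since the two rest on the same ingredients: the dimension bound $\dim Y\le\dim T_eY\le\dim\Lie\G$ for irreducible algebraic $Y\ni e$, curve-connectedness, and the fact that products of irreducible sets stay irreducible. \name{Ramanujam}'s version is somewhat more economical: rather than first manufacturing algebraic subgroups via your stabilizing chain $\overline{Y^{(m)}}$ and then maximizing among them, it takes directly an irreducible algebraic $Y\ni e$ of maximal dimension and shows, for any irreducible $S\ni e$, that $\overline{S\cdot Y}=\overline{Y}$ (irreducible, contains $Y$, dimension cannot increase), whence $\G\subseteq\overline{Y}$; then $\overline{Y}=Y^{-1}\cdot Y$ delivers the group structure in one stroke. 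Your route through the lemma and the symmetric word $HCHC^{-1}H$ reaches the same conclusion but with one extra layer; what it buys you is an explicit supply of algebraic \emph{subgroups} along the way, which is conceptually pleasant even if not strictly needed here.
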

We will prove a similar statement for so-called algebraically generated groups, see \cite{ArFlKa2013Flexible-varieties}.
Let $(G_{i})_{i\in I}$ be a family of connected algebraic subgroups of an ind-group $\G$. 
The subgroup $G\subset \G$  generated by the $G_{i}$ will be called  {\it algebraically generated} (Definition~\ref{AGgroup.def}).

Let $L(G) \subset \Lie\G$ be the Lie subalgebra generated by the Lie algebras $\Lie G_{i}$.
We will see in Theorem~\ref{main1.thm} that $L(G)$
is invariant under the action of $G$ and of the closure $\overline{G}$. This implies that
$L(G) \subset \Lie\overline{G}$ is an ideal.

\begin{que}\label{Q1}
Do we have $L(G) = \Lie \overline{G}$?
\end{que}
Our result in this direction is the following, see Theorem~\ref{main2.thm}.

\begin{thmA}\label{mthm:0}
The subgroup $G\subset \G$ is an algebraic group if and only if $L(G)$ is finite dimensional, and in this case we have $\Lie G = L(G)$.
\end{thmA}

See also Theorem~\ref{main1.thm}.
Our example in section~\ref{sec:example} gives an infinite dimensional $L$ where Question~\ref{Q1} above has a positive answer, see Remark~\ref{example.rems}(1).

Theorem~A  above is an important ingredient in the proof of the following result.
\begin{thmB}
Let $G \subset \G$ be a solvable subgroup generated by a family of connected algebraic subgroups. If the family is finite, then $G$ is an algebraic group. In general, $G$ is nested, i.e. a filtered union of algebraic subgroups, and is of the form $U_G \rtimes T$ where $T$ is a torus and $U_G$ a nested unipotent group.

In particular, if $G$ is generated by a family of unipotent algebraic groups, then $G = U_G$ is a nested unipotent group.
\end{thmB}
The proof of Theorem B will be given in Section~\ref{solvable.subsec}, see Theorem~\ref{thmB-new}.
\begin{rems}\label{AG.rems}
\be
\item 
Clearly, a connected nested ind-group is algebraically generated.

\item
If $G$ is an algebraic group and $H_1,\ldots, H_m \subset G$ a finite set of connected closed subgroups, then the subgroup $H$ generated by the $H_i$ is closed. (Indeed, there is a sequence $i_1,i_2,\ldots,i_N$ such that the product  $P:=H_{i_1} H_{i_2} \cdots H_{i_N}$ is dense (and constructible) in $\overline{H}$. It then follows that $P P = \overline{H}$, hence $H = \overline{H}$.)

As a consequence we see that a subgroup of a nested ind-group $\G$ generated by finitely many connected algebraic groups is an algebraic group. 

Note that this statement does not hold for non-connected subgroups. E.g. the two matrices 
$A:=\left[\begin{smallmatrix} 0 & -1\\1 & \phantom{-}0\end{smallmatrix}\right]$ and 
$B:=\left[\begin{smallmatrix} 0 & -\frac{1}{2}\\2 & \phantom{-}0\end{smallmatrix}\right]$ have both order 4, and the product $AB = \left[\begin{smallmatrix} -2 & \phantom{-}0 \\ \phantom{-}0 &-\frac{1}{2}\end{smallmatrix}\right]$ has infinite order. Thus $\overline{\langle A, B \rangle} = N$, the normalizer of the diagonal torus, but $\langle A, B \rangle \subseteq \SL_2(\QQ)$. It follows that the subgroup of $\SL_2(\kk)$ generated by the finite groups $\langle A\rangle$ and $\langle B\rangle$ is not closed.

\item
If the ind-group $\G$ verifies \name{Tits}' alternative, then the assumption of solvability of $G$ in Theorem B can be replaced by the weaker assumption that $G$ has no nonabelian free subgroup; see e.g. \cite{ArZa2021Tits-type-alternative-for} and the literature therein for a discussion of \name{Tits}' alternative in ind-groups of type $\Aut(X)$. 
\ee
\end{rems}
\ps
\subsection{Solvability and triangulation}\label{ss:solvable-triangular}\label{solvability-triangulation.subsec}
The de Jonqui\`eres subgroup $\Jonq(n) \subset\Aut(\An)$ consists of the automorphisms of the form
$$
\phi=(a_1x_1+f_1, a_2x_2 +f_2(x_1),
\ldots, 
a_n x_n + f_n(x_1,\ldots,x_{n-1}))
$$
where $a_i \in \kst$ and $f_i\in\kk[x_1,\ldots,x_{i-1}]$. 
It is known that the unipotent elements of $\Jonq(n)$ form a solvable subgroup of derived length $n$ which is not nilpotent for $n>1$ (see Remarks~\ref{rem1} and \ref{rem2}). More generally, we have the following result.

\begin{thmC}
A nested unipotent subgroup $U \subset \Aut(X)$ is 
solvable of derived length $\leq \max\{\dim Ux\mid x \in X\}\leq \dim X$.
\end{thmC}
It is known that the derived length of a nilpotent (respectively, a solvable) connected Lie group $G$ acting faithfully on a Hausdorff topological space $M$ is bounded above by $\dim M$ (respectively, by $\dim M+1$), 
see \cite[Theorem 1.2]{EpTh1979Transformation-groups}. Up to passing to a finite index subgroup, the former estimate works as well in the case of a finitely generated nilpotent group acting faithfully on a quasi-projective variety $X$ defined over a field of characteristic zero \cite[Theorem B]{Ab2022Actions-of-nilpote}. 
\ps
The proof of Theorem~C will be given in Section~\ref{nested-unipotent.subsec}, see Theorem~\ref{thmC}. It is based on the following important triangulation result, see Theorem~\ref{thmD}.
\begin{thmD}
Assume that a nested unipotent subgroup $U \subset \Aut(\An)$ has a dense orbit in $\An$. Then $U$ acts transitively on $\An$ and $U$ is triangulable, i.e., it is conjugate to a subgroup of the de Jonqui\`eres subgroup.
\end{thmD}

In \cite{Ba1984A-nontriangular-ac} \name{Bass} gave an example of a $\Ga$-action on $\AA^3$ which is not triangulable, i.e. the image $A \subset\Aut(\AA^3)$ of $\Ga$ is not conjugate to a subgroup of $\Jonq(3)$. The reason is that the fixed points set $(\AA^3)^{\Ga}$ is a hypersurface with an isolated singularity. This is not possible for a triangulable action, because for such an action the fixed point set has the form $X \times\Aone$. With the same idea one can construct  non-triangulable $\Ga$-actions in any dimension, see \cite{Po1987On-actions-of-bf-G}.

The $\Ga$-action of \name{Bass} corresponds to the locally nilpotent vector field $\delta:= (xz+y^2)(-2y\partial_x + z \partial_y)$, and the image $A$ of $\Ga$ contains the famous Nagata automorphism 
$$
\eta = (x-2y(xz+y^2) - z(xz+y^2)^2, y+z(xz+y^2), z). 
$$
Due to the celebrated \name{Shestakov-Umirbaev} Theorem  (\cite{ShUm2003The-Nagata-automor}) this $\Ga$-sub\-group is not contained in the tame subgroup  of $\Aut(\AA^3)$. However, it becomes tame in $\AA^4$, see \cite{Sm1989Stably-tame-automo}, but it is still non-triangulable in $\AA^4$, see \cite[Lemma~3.36]{Fr2006Algebraic-theory-o}.
The question arises if this action is {\it stably triangulable}, i.e. becomes triangulable in $\AA^3\times \AA^d$ for a suitable $d\geq 1$. In this context we have the following negative answer.

\begin{prop}\label{Bass.prop}
Consider a $\Ga$-action on $\An$ and assume that the fixed point set is a hypersurface with an isolated singularity. Then the action is not stably triangulable.
\end{prop}
\begin{proof}
(a)
Denote by $F \subset \An$ the fixed point set and by $p \in F$ the isolated singularity. For the quotient morphism $\pi\colon\An \to \An\quot\Ga$ the fiber $\pi^{-1}(\pi(p))$ has dimension $\geq 1$ and thus is not contained in the singularities of $F$. If we extend the action to $\An\times \AA^d$ where $\Ga$ acts trivially on $\AA^d$, then $(\An\times\AA^d)\quot \Ga = \An\quot\Ga \times \AA^d$ and the quotient morphism is equal to $\tilde\pi := \pi\times\id$. Moreover, the fixed point set is $\tilde F := F\times \AA^d$, and the singularities of $\tilde F$ are ${\tilde F}_{\text{sing}} := \{p\}\times \AA^d$. It follows that ${\tilde\pi}^{-1}(\tilde\pi({\tilde F}_{\text{sing}}))$ is not contained in ${\tilde F}_{\text{sing}}$.
\ps
(b)
Now assume that the action on $\AA^n \times \AA^d$ is triangulable, so that the corresponding vector field is equivalent to one of the form
$$
\tilde\delta = f_1\partial_{x_1} + f_2\partial_{x_2}  + \cdots + f_{m}\partial_{x_{m}}
$$ 
where $m := n + d$ and $f_i \in \kk[x_1,\ldots,x_{i-1}]$. The fixed point set $\tilde F$ is an irreducible hypersurface defined by an invariant (irreducible) function $h$, and so $h$ must divide all  the $f_i$. Since the singular points of $\tilde F$ form a subvariety of dimension $d$ this implies that  $f_1=\cdots=f_n = 0$. In fact, if $h$ would depend on less than $n$ variables, then the zero set  $\{h=0\}\subset\AA^m$ would have the form $X \times \AA^{d+1}$ and so the singular set of $\tilde F$ would have at least dimension $d+1$.

Let $f_{r+1}$ be the first non-zero coefficient of $\tilde\delta$. Then $r\geq n$ and $x_1,x_2,\ldots,x_r$ are invariants. Since $h$ divides $f_{r+1}$ we see that $h$ only depends on the variables $x_1,\ldots,x_r$. It follows that the linear $\Ga$-invariant morphism $\phi\colon \AA^m \to \AA^r$, $(x_1,\ldots,x_m) \mapsto (x_1,\ldots,x_r)$, maps the fixed point set $\tilde F$ to the hypersurface $F_r:=\{h=0\} \subset \AA^r$, and  
$\tilde F = F_r \times \AA^{m-r}$. This shows that $\phi^{-1}(\phi({\tilde F}_{\text{sing}})) = {\tilde F}_{\text{sing}}$. Since the invariant map $\phi$ factors through the quotient map $\tilde\pi$ this contradicts what we have seen in (a).
\end{proof}
\begin{rems}\label{freundenburg.rem}$\,$

\be
\item 
The non-triangulable $\Ga$-actions on $\An$ constructed  in \cite{Po1987On-actions-of-bf-G} are not stably triangulable.
\item
The Nagata automorphism $\eta$ is not contained in a unipotent subgroup $U \subset \Aut(\AA^3)$ which has a dense orbit in $\AA^3$, by our Theorem~D.
\item
If we consider the diagonal action on $\AA^3 \times \Aone$ where $\Ga$ acts on $\AA^3$ as in \name{Bass}' example and on $\Aone$ by translation, then this new action is triangulable. Indeed, we have the following result. 
\ps
{\it Let $U$ be a unipotent group acting on $\An$. Then the diagonal action on $\An \times U$ where $U$ acts by left multiplication on $U$ is triangulable.}
\ps
Indeed, consider the isomorphism $\phi\colon \An \times U \simto \An \times U$ given by $(x,u ) \mapsto (u^{-1} x,u)$. This morphism is $U$-equivariant with respect to the diagonal action on the left hand side and the action on $U$ by left multiplication (and the trivial action on $\An$) on the right hand side. Since the action on $U$ is triangulable by our Theorem~D above the claim follows.

Likewise, given any $\kk$-algebra $R$ and a locally nilpotent deri\-vation $\partial$ of $R$, its extension $\overline\partial$  to $R[x]$ by letting $\overline\partial(x)=1$ is conjugate to the locally nilpotent $R$-derivation $\delta$  of $R[x]$ defined by $\delta(x)=1$.
\ee
\end{rems}
\ps
\subsection{Locally finite subsets} 
Let $V$ be a vector space over $\kk$. 
We denote by $\Ll(V)$ the algebra of linear endomorphisms of $V$. 
An endomorphism $\lambda \in \Ll(V)$ is called {\it locally finite\/} if  the linear span $\langle \lambda^j(v) \mid j \in \NN\rangle$ 
is finite dimensional for any $v \in V$.
It is called {\it semisimple\/} if there is a basis of eigenvectors, and {\it locally nilpotent} if for any $v \in V$ there is an $m \in \NN$ such that $\lambda^m(v) = 0$. 
Every locally finite endomorphism $\lambda$ has a uniquely defined {\it additive Jordan-decomposition\/} $\lambda = \lambda_s + \lambda_n$ 
where $\lambda_s$ is semisimple, $\lambda_n$ locally nilpotent, and $\lambda_s \circ \lambda_n = \lambda_n \circ \lambda_s$.

A subset $S \subset \Ll(V)$ is called {\it locally finite} if every $v\in V$
is contained in a finite dimensional subspace $W \subset V$ which is invariant
under all elements from $S$. If $S \subset\GL(V)$, then $S$ is locally finite if
and only if the group $\langle S \rangle$ generated by $S$ is locally finite.

Note that a locally finite subspace $A \subset \Ll(V)$ is not necessarily finite dimensional. 
In fact, let $(e_1,e_2,\ldots)$ be a basis of $V:=\kk^\infty$ and define $\lambda_k\in\Ll(V)$ by $\lambda_k(e_j):=\delta_{kj}e_j$. 
Then $A:=\bigoplus_k \kk\lambda_k$ is an infinite dimensional locally finite subspace. 

\begin{defn}
Let $X$ be an affine variety. A morphism  $\phi\colon X \to X$ is called
{\it locally finite} if the induced endomorphism $\phi^{*}\in\Ll(\OOO(X))$ 
is locally finite. It is called {\it semisimple\/} if $\phi^*$ is semisimple, and {\it locally nilpotent} if $\phi^*$ is locally nilpotent.

A subgroup $H \subset \Aut(X)$ is called {\it locally finite} if the image of $H$
in $\Ll(\OOO(X))$ is locally finite.
\end{defn}
Note that a subgroup $H \subset \Aut(X)$ is locally finite if and only if the
closure $\overline{H} \subset \Aut(X)$ is an algebraic group. If $g \in \Aut(X)$ is locally finite, 
then the closed commutative subgroup $\overline{\langle g \rangle}$ has the form $\Gm^p \times \Ga^q \times F$ where $p\in\ZZ_{\geq 0}$, $q \in \{0,1\}$ and $F$ is a finite cyclic group.

There are many examples of subgroups $G \subset \Aut(X)$ that are not locally finite while being generated by locally finite elements. 
We will discuss such an example in Section~\ref{sec:example}. We also refer to the interesting discussions in  
\cite[9.4.3--9.4.5]{FuKr2018On-the-geometry-of} and \cite{PeRe2022When-is-the-automo} of subgroups $G \subset\Aut(X)$ which consist of locally finite elements.
\ps
\subsection{Locally finite vector fields}\label{lofi-VF.subsec}
Recall that the vector fields $\VF(X)$ are the derivations $\Der(\OOO(X))$ of $\OOO(X)$. Since  $\Der(\OOO(X)) \subset \Ll(\OOO(X))$ we can talk about {\it locally finite vector fields\/} and {\it locally finite subspaces\/} of $\VF(X)$.

\begin{exa}
If $G \subset \Aut(X)$ is an algebraic subgroup, then its Lie algebra $\Lie(G) \subset \VF(X)$  is locally finite. These Lie algebras are called {\it algebraic}.
\end{exa}
In contrast to the general situation we have the following finiteness result.
\begin{lem}\label{locallyfinite.lem}
Let $L \subset \VF(X)$ be a locally finite subspace. Then $L$ is finite dimensional.
\end{lem}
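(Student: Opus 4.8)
The plan is to exploit the fact that a locally finite subspace $L\subset\VF(X)$ is, by definition, a union of finite-dimensional subspaces each stable under a common large collection of local data, but more importantly that $\VF(X)$ is a module over the coordinate ring $\OOO(X)$, which is a finitely generated $\kk$-algebra. First I would fix generators $f_1,\dots,f_n$ of $\OOO(X)$, so that a vector field (= derivation) $\delta\in\VF(X)$ is completely determined by the $n$-tuple $(\delta f_1,\dots,\delta f_n)\in\OOO(X)^n$. For each index $j$, consider the $\kk$-linear evaluation-type map $\mu_j\colon L\to\OOO(X)$, $\delta\mapsto \delta f_j$; then $L$ embeds into $\bigoplus_{j=1}^n \OOO(X)$ via $\delta\mapsto(\delta f_1,\dots,\delta f_n)$. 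So it suffices to bound the dimension of the image, i.e. to show that each $\mu_j(L)$ lies in a finite-dimensional subspace of $\OOO(X)$.

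The key step is to see that local finiteness of $L$ forces $\mu_j(L)$ to be a locally finite subspace of $\Ll(\OOO(X))$ in a suitable sense — or more directly, to pass through the element-wise local finiteness. Here I would use the definition: for the element $f_j\in\OOO(X)$, there is a finite-dimensional subspace $W_j\subset\OOO(X)$ containing $f_j$ and invariant under every element of $L$. Invariance under $L$ means $\delta(W_j)\subseteq W_j$ for all $\delta\in L$; in particular $\delta f_j\in W_j$ for every $\delta\in L$, so $\mu_j(L)\subseteq W_j$, which is finite dimensional. Taking $W:=W_1+\cdots+W_n$, a finite-dimensional subspace of $\OOO(X)$, we get that the embedding $L\hookrightarrow\bigoplus_{j=1}^n\OOO(X)$ actually lands in $W^{\oplus n}$, hence $\dim_\kk L\le n\cdot\dim_\kk W<\infty$.

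I expect the only real subtlety to be making the reduction rigorous: namely that a derivation of $\OOO(X)$ is determined by its values on algebra generators (this is immediate from the Leibniz rule and $\kk$-linearity, since $\OOO(X)=\kk[f_1,\dots,f_n]$), and that the resulting map $L\to\OOO(X)^n$ is $\kk$-linear and injective. Neither presents difficulty; everything else is the straightforward unwinding of the definition of a locally finite subspace applied to the finitely many generators $f_1,\dots,f_n$. There is no need to invoke Jordan decomposition or algebraicity of individual vector fields — the statement is purely about finite generation of $\OOO(X)$ together with the module structure $\VF(X)=\Der(\OOO(X))$.
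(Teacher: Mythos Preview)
Your proof is correct and follows essentially the same approach as the paper: embed $L$ into $\OOO(X)^n$ via evaluation on a finite set of algebra generators $f_1,\dots,f_n$, then use local finiteness to see that the image lands in a finite-dimensional subspace. The paper phrases the target as $\bigoplus_i Lf_i$ rather than introducing the auxiliary $W_j$, but this is a cosmetic difference since $Lf_j\subset W_j$.
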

\begin{proof}
If $f_1,\ldots, f_m\in\OOO(X)$ is a set of generators, then every vector field  $\delta$ is determined by the values $\delta(f_1),\ldots,\delta(f_m)$. 
It follows that for any subspace $L \subset \VF(X)$ the linear map $L \to \OOO(X)^m$, $\delta \mapsto (\delta(f_1),\ldots,\delta(f_m))$ is injective.
 If $L$ is locally finite, then the image of this map is contained in the finite dimensional subspace $\bigoplus_{i=1}^m Lf_i$, and the claim follows.
\end{proof}

The fact that a vector field is determined by its values on a finite generating set of $\OOO(X)$ has the following important consequences.

\begin{lem}\label{locallyfiniteVF.lem}
Let $V \subset \OOO(X)$ be a finite dimensional  subspace generating  $\OOO(X)$,
and let $\delta \in \VF(X)$ be a vector field. If $V$ is $\delta$-invariant, 
then the following holds.
\be
\item $\delta$ is locally finite.
\item $\delta$ is semisimple if and only if $\delta|_V \in \gl(V)$ is semisimple.
\item $\delta$ is locally nilpotent if and only if $\delta|_V \in \gl(V)$ is nilpotent.
\item If $\delta = \delta_s + \delta_n$ is the Jordan decomposition in $\Ll(\OOO(X))$, then $\delta_s, \delta_n \in \VF(X)$.
\item $V$ is $\delta_s$- and $\delta_n$-invariant, and $\delta|_V = \delta_s|_V + \delta_n|_V \in \gl(V)$ is the usual Jordan decomposition in $\gl(V)$.
\ee
\end{lem}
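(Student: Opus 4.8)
The plan rests on one elementary observation. Since the finite-dimensional subspace $V$ generates $\OOO(X)$ as an algebra, we have an exhausting filtration $\OOO(X)=\bigcup_{d\ge 0}V_{\le d}$ by the finite-dimensional subspaces $V_{\le d}:=\kk+V+V^{2}+\cdots+V^{d}$, and as $\delta$ is a derivation with $\delta(V)\subseteq V$, the Leibniz rule gives $\delta(V^{j})\subseteq V^{j}$, hence $\delta(V_{\le d})\subseteq V_{\le d}$ for every $d$. Thus every element of $\OOO(X)$ lies in a finite-dimensional $\delta$-invariant subspace, which is exactly~(1); in particular $\delta$ is locally finite, so its Jordan decomposition $\delta=\delta_s+\delta_n$ in $\Ll(\OOO(X))$ exists and statement~(4) is meaningful.

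For (2) and (3) I would work with the pieces $V^{d}$. If $e_{1},\dots,e_{n}$ is a basis of $V$ consisting of eigenvectors of $\delta|_V$ with eigenvalues $\lambda_{i}$, then the monomials in the $e_{i}$ span $\OOO(X)$ and each of them is a $\delta$-eigenvector, with eigenvalue the corresponding sum of the $\lambda_{i}$; hence, when $\delta|_V$ is semisimple, $\OOO(X)$ is spanned by $\delta$-eigenvectors and $\delta$ is semisimple. Similarly, if $(\delta|_V)^{r}=0$, the multinomial Leibniz formula shows that $\delta^{m}$ annihilates every product of at most $k$ elements of $V$ as soon as $m>k(r-1)$, so $\delta$ is locally nilpotent. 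The two converse implications are the standard fact that the restriction of a locally finite semisimple (resp. locally nilpotent) endomorphism to an invariant subspace keeps that property, applied to $V\subseteq\OOO(X)$.

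The substantive part is (4) and (5), for which I would use the decomposition of $\OOO(X)$ into generalised eigenspaces of $\delta$. Because $\delta$ is locally finite, $\OOO(X)=\bigoplus_{\lambda\in\kk}\OOO(X)_{\lambda}$ with $\OOO(X)_{\lambda}=\{f\mid(\delta-\lambda)^{N}f=0\text{ for }N\gg 0\}$, and by construction of the Jordan decomposition $\delta_s$ acts on $\OOO(X)_{\lambda}$ as the scalar $\lambda$. A short induction from the Leibniz rule yields $\OOO(X)_{\lambda}\cdot\OOO(X)_{\mu}\subseteq\OOO(X)_{\lambda+\mu}$, i.e. this is a grading of $\OOO(X)$ by the group $(\kk,+)$; it follows at once that the scalar operator $\delta_s$ is a derivation, and then so is $\delta_n=\delta-\delta_s$, which is~(4). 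For (5), the finite-dimensional $\delta$-invariant subspace $V$ splits as $V=\bigoplus_{\lambda}\bigl(V\cap\OOO(X)_{\lambda}\bigr)$, hence is stable under $\delta_s$ and $\delta_n$, and on $V$ the identity $\delta|_V=\delta_s|_V+\delta_n|_V$ has semisimple first summand, nilpotent second summand and commuting summands, so by uniqueness it is the Jordan decomposition in $\gl(V)$.

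The only point that is not mere bookkeeping is the multiplicativity $\OOO(X)_{\lambda}\cdot\OOO(X)_{\mu}\subseteq\OOO(X)_{\lambda+\mu}$ entering the proof of (4); everything else follows from the filtration by the $V_{\le d}$ together with standard properties of Jordan decompositions of locally finite operators. One may avoid the explicit grading by appealing instead to the functoriality of the Jordan decomposition: the multiplication $m\colon\OOO(X)\otimes\OOO(X)\to\OOO(X)$ intertwines the locally finite operator $\delta\otimes\id+\id\otimes\delta$, whose semisimple part is $\delta_s\otimes\id+\id\otimes\delta_s$, with $\delta$, and its kernel is $\delta$-stable; hence $m\circ(\delta_s\otimes\id+\id\otimes\delta_s)=\delta_s\circ m$, which is precisely the Leibniz identity for $\delta_s$.
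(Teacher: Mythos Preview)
Your proof is correct and, for parts (1)--(3), follows essentially the same route as the paper: the filtration of $\OOO(X)$ by the $\delta$-invariant pieces $V^{(d)}$ (your $V_{\le d}$) together with the observation that semisimplicity and nilpotency of $\delta|_V$ propagate to each piece via the Leibniz rule. The only real difference is in (4) and (5): the paper simply cites \cite[Proposition~7.6.1]{FuKr2018On-the-geometry-of} for the fact that $\delta_s,\delta_n$ are derivations and then deduces (5) from (2) and (3), whereas you supply a self-contained argument via the generalized-eigenspace grading $\OOO(X)=\bigoplus_\lambda\OOO(X)_\lambda$ and the multiplicativity $\OOO(X)_\lambda\cdot\OOO(X)_\mu\subseteq\OOO(X)_{\lambda+\mu}$. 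Your argument is exactly the standard one underlying the cited reference, so this is a matter of exposition rather than a genuinely different method.
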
 
\begin{proof}
Denote by $V^{(n)} \subset \OOO(X)$ the linear subspace generated by the $n$-fold products of elements from $V$. 
Then the following is obvious: (a)~$V^{(n)}$ is $\delta$-invariant; (b) if $\delta|_V$ is semisimple resp. nilpotent, then so is $\delta|_{V^{(n)}}$. This proves (1)--(3). 
The claim (4)  follows from  [FK18, Proposition 7.6.1], and (5) follows  from 
(2) and (3).
\end{proof}

\begin{que}\label{Q2}
Let $L_{i}\subset \VF(X)$ ($i\in I$) be a family of locally finite Lie subalgebras, and 
denote by $L \subset \VF(X)$ the Lie subalgebra generated by the $L_i$. Is $L$ locally finite in case $L$ is finite dimensional?
\end{que}

In this direction we have the following consequence of  Theorem~A.

\begin{cor*}
Let  $L\subset \VF(X)$ be the Lie subalgebra generated by a family of locally finite Lie subalgebras $L_i\subset \VF(X)$, $i\in I$. 
Assume that each $L_i$ is algebraic, i.e. $L_i = \Lie G_i$ for a connected algebraic group $G_i\subset \Aut(X)$. 
Then $L$ is locally finite if and only if $L$ is finite dimensional. In this case the subgroup $G$ generated by the $G_i$ is algebraic and $L=\Lie G$. In particular, $L$ is algebraic.
\end{cor*}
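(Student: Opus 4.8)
The plan is to deduce the Corollary directly from Theorem~A together with Lemma~\ref{locallyfinite.lem}.

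First I would prove the equivalence ``$L$ locally finite $\iff$ $L$ finite dimensional''. One implication is trivial. For the other, suppose $L$ is finite dimensional. Since each $L_i = \Lie G_i$ is algebraic, Theorem~A applies verbatim: the subgroup $G \subset \Aut(X)$ generated by the $G_i$ is an algebraic subgroup and $\Lie G = L$. Being the Lie algebra of an algebraic subgroup of $\Aut(X)$, $L$ is locally finite (this is exactly the Example before Lemma~\ref{locallyfinite.lem}, since the representation of an algebraic group on $\OOO(X)$ is locally finite, hence so is the induced representation of $\Lie G$). This also immediately yields the remaining assertions: in this case $G$ is algebraic, $L = \Lie G$, and $L$ is algebraic by definition.

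Conversely, combining the two directions with Lemma~\ref{locallyfinite.lem} gives a sharper reading: if $L$ is locally finite then it is automatically finite dimensional, so in the statement one of the two hypotheses is redundant — but I would keep the phrasing as is, since the point is the bidirectional characterization together with the structural conclusion. The only subtlety worth spelling out is why ``each $L_i$ algebraic'' is genuinely needed rather than just ``each $L_i$ locally finite'': it is precisely the hypothesis that upgrades each $L_i$ to $\Lie G_i$ for a \emph{connected} algebraic group $G_i$, so that the $G_i$ generate an algebraically generated subgroup $G$ in the sense of Section~\ref{alg-gen-groups:sec} and Theorem~A becomes applicable. (Indeed Question~\ref{Q2} asks whether the conclusion survives without algebraicity, so one should not claim more than Theorem~A gives.)

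There is no real obstacle here — the Corollary is a formal consequence of Theorem~A — so the ``hard part'' is only bookkeeping: making sure that the $G_i$ produced from the $L_i$ are connected (which is built into the notion of an algebraic Lie subalgebra $L_i = \Lie G_i$, one may always replace $G_i$ by its identity component without changing $\Lie G_i$), and that the Lie algebra generated by the $\Lie G_i$ inside $\VF(X)$ is the same object $L$ that Theorem~A controls. Once this is noted the proof is a one-paragraph reduction.
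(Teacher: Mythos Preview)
Your approach is essentially identical to the paper's: locally finite $\Rightarrow$ finite dimensional by Lemma~\ref{locallyfinite.lem}, and finite dimensional $\Rightarrow$ the full conclusion (including local finiteness and $L=\Lie G$) by Theorem~A. The only quibble is that you call one implication ``trivial'' before justifying it; the direction locally finite $\Rightarrow$ finite dimensional is precisely the content of Lemma~\ref{locallyfinite.lem}, not a triviality, so cite it up front rather than deferring it to the ``Conversely'' paragraph.
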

\begin{proof}
If $L$ is locally finite, then it is finite dimensional, by Lemma~\ref{locallyfinite.lem}. If $L$ is finite dimensional, then the claim follows from Theorem~A.
\end{proof}
\begin{rem}
For a locally finite vector field $\delta \in \VF(X)$ there is a uniquely determined minimal algebraic group $H \subset \Aut(X)$ such that $\delta \in \Lie H$.  Moreover, $H$ is commutative and connected, and $H$ is a torus if $\delta$ is semisimple and 
$H \simeq \Ga$ if $\delta$ is locally nilpotent and non-zero, see \cite[Proposition~7.6.1]{FuKr2018On-the-geometry-of}.
\end{rem}
\ps
\subsection{The adjoint action on \tp{$\VF(X)$}{Vec(X)}}
For any vector field $\delta$ we have the {\it adjoint action\/} $\ad \delta$ on $\VF(X)$ defined in the usual way: 
$$
\ad \delta (\eta) := [\delta,\eta] = \delta\circ\eta - \eta \circ \delta.
$$
\begin{lem}\label{adjoint.lem}
Let $\delta \in \VF(X)$ be a locally finite vector field with Jordan decomposition $\delta =\delta_s +\delta_n$. 
Then $\ad\delta$ acting on $\VF(X)$ is locally finite, and $\ad\delta = \ad\delta_s + \ad\delta_n$ is the Jordan decomposition.
\end{lem}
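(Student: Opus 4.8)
Write $R:=\OOO(X)$, so that $\VF(X)=\Der(R)\subseteq\Ll(R)$, and recall from Lemma~\ref{locallyfiniteVF.lem} that $\delta_s,\delta_n\in\VF(X)$ and $[\delta_s,\delta_n]=0$. Since the additive \name{Jordan} decomposition of a locally finite operator is unique, the plan is to verify the following three facts and then conclude: (i)~$\ad\delta_s$ is semisimple on $\VF(X)$; (ii)~$\ad\delta_n$ is locally nilpotent on $\VF(X)$; (iii)~$\ad\delta:=\ad\delta_s+\ad\delta_n$ is locally finite. Commutativity of $\ad\delta_s$ and $\ad\delta_n$ is automatic, since $[\ad\delta_s,\ad\delta_n]=\ad[\delta_s,\delta_n]=0$. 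So I would reduce everything to the two ``scalar'' building blocks, the semisimple derivation $\delta_s$ and the locally nilpotent derivation $\delta_n$ of $R$, and analyse the induced operators on $\Der(R)$ separately.

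\emph{The semisimple part.} I would use the eigenspace decomposition $R=\bigoplus_{c}R_c$ of $\delta_s$, which is a grading of $R$ by the additive group $\kk$ (here $R_c\cdot R_{c'}\subseteq R_{c+c'}$). Because $R$ is finitely generated, every derivation has only finitely many nonzero homogeneous components, since it is determined by its values on a finite set of homogeneous generators; hence $\VF(X)=\bigoplus_{\mu}\VF(X)_\mu$ with $\VF(X)_\mu=\{\eta\in\VF(X)\mid \eta(R_c)\subseteq R_{c+\mu}\ \text{for all }c\}$, and $[\VF(X)_\mu,\VF(X)_\nu]\subseteq\VF(X)_{\mu+\nu}$. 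A one-line computation ($\ad\delta_s(\eta)(f_c)=\delta_s(\eta(f_c))-\eta(\delta_s f_c)=(c+\mu)\eta(f_c)-c\,\eta(f_c)$ for $\eta\in\VF(X)_\mu$) shows $\ad\delta_s$ acts on $\VF(X)_\mu$ as multiplication by $\mu$, which is (i). Moreover $[\delta_s,\delta_n]=0$ forces $\delta_n\in\VF(X)_0$, so $\ad\delta_n$, and hence $\ad\delta$, preserves each $\VF(X)_\mu$, acting there as $\mu\cdot\id+\ad\delta_n|_{\VF(X)_\mu}$. Thus (iii) — together with the assertion that $\ad\delta_s$ and $\ad\delta_n$ are exactly the semisimple and the locally nilpotent part of $\ad\delta$ — will follow as soon as (ii) is available.

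\emph{The nilpotent part.} Here I would regard $\ad\delta_n(\eta)=[\delta_n,\eta]$ as a commutator in $\Ll(R)$ and expand
\[
(\ad\delta_n)^k(\eta)=\sum_{i=0}^{k}\binom{k}{i}(-1)^{k-i}\,\delta_n^i\,\eta\,\delta_n^{k-i}.
\]
Fix a finite generating set of $R$ and enclose it, together with its image under $\eta$, in finite dimensional $\delta_n$-stable subspaces $U_0\subseteq U_1\subseteq R$; this is possible because $\delta_n$, being a summand of the locally finite $\delta$, is locally finite. On $U_0$ and $U_1$ the nilpotent operator $\delta_n$ has index at most $\dim U_0$ resp. $\dim U_1$. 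For $k>\dim U_0+\dim U_1$ every summand above, evaluated on a generator $g\in U_0$, vanishes: either $\delta_n^{k-i}$ already kills $g$, or else $\delta_n^{k-i}g\in U_0$ so $\eta(\delta_n^{k-i}g)\in U_1$ and $\delta_n^i$ kills it. Hence $(\ad\delta_n)^k(\eta)=0$ for $k\gg0$, which is (ii). Combined with the previous paragraph, this proves that $\ad\delta$ is locally finite and that $\ad\delta=\ad\delta_s+\ad\delta_n$ is its \name{Jordan} decomposition.

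\emph{Main obstacle.} The delicate point is (ii): the binomial bookkeeping must be organized so that a \emph{single} bound on $k$ works simultaneously for all elements of the chosen finite generating set, rather than merely killing $(\ad\delta_n)^k(\eta)$ on one element at a time. A shorter but less self-contained alternative bypasses (ii) and (iii): since $\delta$ is locally finite, it lies in $\Lie H$ for an algebraic subgroup $H\subseteq\Aut(X)$ by \cite[Proposition~7.6.1]{FuKr2018On-the-geometry-of}, and then also $\delta_s,\delta_n\in\Lie H$ because $H$ is connected commutative; one invokes that $X$ is an $H$-variety, so $H$ acts on $\VF(X)$ locally finitely and regularly with differential $A\mapsto\ad\xi(A)$ (up to sign), and that a rational representation of an algebraic group carries the \name{Jordan} decomposition of Lie algebra elements to that of the associated operators. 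Restricting to any finite dimensional $H$-invariant subspace then gives both statements at once.
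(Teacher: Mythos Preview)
Your proof is correct and uses the same two ingredients as the paper: the eigenspace grading of $\OOO(X)$ for the semisimple part, and the commutator expansion $(\ad\delta_n)^k(\eta)=\sum_i\binom{k}{i}(-1)^{k-i}\delta_n^{\,i}\eta\,\delta_n^{\,k-i}$ together with finite-dimensional $\delta_n$-stable subspaces for the nilpotent part. The only organisational difference is that the paper first proves local finiteness of $\ad\delta$ \emph{directly}---by embedding $\VF(X)\hookrightarrow\Ll(V,\OOO(X))$ via restriction and observing that every $(\ad\delta)^m\mu$ lands in the finite-dimensional block $\Ll(V,W)$---and only afterwards checks the semisimple and nilpotent cases, whereas you build local finiteness out of the two pieces. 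One small point to tighten in your step~(ii): the inference ``$\delta_n^{k-i}g\in U_0$, so $\eta(\delta_n^{k-i}g)\in U_1$'' needs $\eta(U_0)\subseteq U_1$, not just $\eta(\text{generators})\subseteq U_1$; so choose $U_1$ to be a finite-dimensional $\delta_n$-stable subspace containing $\eta(U_0)$ (this is exactly how the paper picks $W\supseteq\mu(V)$).
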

\begin{proof}
Let $V \subset \OOO(X)$ be a finite dimensional $\delta$-invariant subspace which generates $\OOO(X)$. Then we have an inclusion 
\[
\VF(X) \into \Ll(V, \OOO(X)), \quad \mu\mapsto \mu|_V.
\]
Given $\mu \in \VF(X)$ we choose a finite dimensional $\delta$-invariant subspace $W$ which contains $\mu(V)$. 
In $\Ll(\OOO(X))$ we have 
\[
(\ad\delta)^{m} \mu =\sum_{i=0}^{m} (-1)^i \binom{m}{i}\delta^{m-i}\mu\,\delta^i.
\] 
Hence $(\ad\delta)^m \mu$ sends $V$ to $W$ for all $m\geq 0$, and so the linear span $\langle (\ad\delta)^m \mu \mid m\geq 0\rangle \subset \VF(X)$ is finite dimensional. 
This shows that $\ad\delta$ is locally finite.

If $\delta$ is locally nilpotent, then $(\delta|_V)^m = 0$ and $(\delta|_W)^m = 0$ for a suitable $m>0$.  The above formula implies that $(\ad\delta)^{2m-1} \mu = 0$, hence $\ad\delta$ is locally nilpotent.

Next, assume that $\delta$ is semisimple. Then we have eigenspace decompositions $V = \bigoplus_a V_a$ and $\OOO(X) = \bigoplus_b \OOO(X)_b$. As a consequence, we get a decomposition
$$
\Ll(V,\OOO(X)) = \bigoplus_{a,b} \Ll(V_a,\OOO(X)_b)
$$
into eigenspaces of $\ad \delta$, and so $\ad\delta$ is semisimple.

Finally, $\ad\delta_s$ and $\ad\delta_n$ commute, because $[\delta_s,\delta_n]=0$, and thus $\ad\delta = \ad\delta_s + \ad\delta_n$ is the Jordan decomposition.
\end{proof}
\ps
\subsection{Toral subalgebras}\label{toral.subsec}
\begin{defn}\label{toral.def}
A finite dimensional Lie subalgebra $S \subset \VF(X)$ is called {\it toral} if it consists of semisimple elements. 
It follows that $S$ is commutative \cite[8.1,~Lemma]{Hu1972Introduction-to-Li} and thus locally finite. 
In particular, we obtain a {\it weight decomposition\/} $\OOO(X) = \bigoplus_{\alpha\in S^*} \OOO(X)_\alpha$ where
$$
\OOO(X)_\alpha := \{f \in \OOO(X) \mid \tau f = \alpha(\tau) \cdot f \text{ for } \tau \in S\}.
$$
\end{defn}
If $T \subset \Aut(X)$ is a torus, then $\Lie T \subset \VF(X)$ is a toral subalgebra, 
but it is not true that every toral subalgebra is of this form. Indeed, if $T \subset \Aut(X)$ is a two dimensional torus, 
then the one dimensional tori $S \subset T$ are the kernels of certain characters $\chi \in X(T)\simeq \ZZ^2$ 
whereas every one dimensional subspace of $\Lie T$ is a toral subalgebra. The best we can get is the following result.
\begin{lem}\label{toral.lem}
For a toral Lie subalgebra $S \subset \VF(X)$ there exists a uniquely defined smallest torus $T \subset \Aut(X)$ such that $\Lie T \supset S$. 
Moreover, if a subspace $M \subset \VF(X)$ is invariant under the adjoint action of $S$, then $M$ is invariant under $T$.
\end{lem}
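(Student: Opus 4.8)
The plan is to build the torus $T$ directly from the weight decomposition attached to $S$ and then verify minimality and the invariance statement. First I would fix the weight decomposition $\OOO(X) = \bigoplus_{\alpha\in S^*}\OOO(X)_\alpha$ from Definition~\ref{toral.def}; only finitely many weights $\alpha_1,\ldots,\alpha_r$ occur on a given finite-dimensional generating subspace $V\subset\OOO(X)$, and since $S$ is spanned by semisimple vector fields whose $\OOO(X)$-action is diagonalized by this decomposition, the weights $\alpha_i$ span $S^*$. The integral span $\Lambda := \ZZ\alpha_1 + \cdots + \ZZ\alpha_r \subset S^*$ is then a finitely generated subgroup whose $\kk$-span is all of $S^*$, hence $\Lambda \cong \ZZ^d$ with $d = \dim S$ (after discarding torsion, which does not occur here since $S^*$ is a $\kk$-vector space and the $\alpha_i$ generate it). The torus $T$ is defined as the closed subgroup of $\Aut(X)$ acting on $\OOO(X)_\alpha$ (for $\alpha\in\Lambda$) by the character corresponding to $\alpha$; concretely, $X(T) := \Lambda$, and $t\in T$ acts on $f\in\OOO(X)_\alpha$ by $t\cdot f = \alpha(t)f$. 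One checks this is a well-defined algebraic action: the grading is by a finitely generated group and is compatible with multiplication in $\OOO(X)$, so it corresponds to an action of the diagonalizable group $D := \Spec \kk[\Lambda]$, which is a torus because $\Lambda$ is torsion-free; and $\Lie T = S$ because the differential of this $T$-action on $V$ is exactly the diagonal action of $S$, and a vector field is determined by its restriction to $V$ (Lemma~\ref{locallyfiniteVF.lem} and the remark that $V$ generates).

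For minimality: if $T'\subset\Aut(X)$ is any torus with $\Lie T'\supset S$, then the $T'$-action on $\OOO(X)$ restricts on each $\OOO(X)_\alpha$ (these are $T'$-invariant, being joint eigenspaces for the $S$-action which sits inside $\Lie T'$) and the characters of $T'$ appearing there must restrict on $S$ to the weights $\alpha$; since the $\alpha_i$ generate $\Lambda = X(T)$, the restriction map $X(T')\to X(T)$ is surjective, which dually gives a closed embedding $T\hookrightarrow T'$ identifying $T$ with the smallest subtorus of $T'$ through which the relevant characters factor. Hence $T$ is contained in every such $T'$, proving both existence of a smallest one and its uniqueness.

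For the invariance claim, suppose $M\subset\VF(X)$ is stable under $\ad S$. By Lemma~\ref{adjoint.lem} each $\ad\tau$ for $\tau\in S$ is a semisimple locally finite operator on $\VF(X)$, so $M$ decomposes into joint eigenspaces $M = \bigoplus_\beta M_\beta$ for the commuting family $\{\ad\tau : \tau\in S\}$, where $\beta$ ranges over $S^*$. I would then show that each occurring weight $\beta$ lies in $\Lambda = X(T)$: indeed $\ad\tau$ acts on $\Ll(V,\OOO(X))\supset\VF(X)$ through $\bigoplus_{a,b}\Ll(V_a,\OOO(X)_b)$ (as in the proof of Lemma~\ref{adjoint.lem}), so the eigenvalues of $\ad\tau$ on $\VF(X)$ are differences $\alpha_j - \alpha_i$ of weights on $V$, hence lie in $\Lambda$. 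Thus $T$ acts on each $M_\beta$ by the character $\beta\in X(T)$, and therefore $T$ preserves $M = \bigoplus_\beta M_\beta$. The one delicate point here — and the step I expect to be the main obstacle — is verifying rigorously that the abstractly constructed diagonalizable group $D = \Spec\kk[\Lambda]$ really acts on the ind-group level as a closed algebraic subgroup $T\subset\Aut(X)$ with $\Lie T = S$ (as opposed to merely acting on the coordinate ring): this requires knowing that the locally finite regular grading of $\OOO(X)$ by the finitely generated group $\Lambda$ integrates to an algebraic $D$-action on $X$, which is standard for affine varieties, and then invoking \cite[Proposition~2.7.1]{FuKr2018On-the-geometry-of} (or the analogous statement) to see that the resulting homomorphism $D\to\Aut(X)$ has closed algebraic image. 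The rest is bookkeeping with weight spaces.
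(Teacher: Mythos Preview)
Your approach is essentially the paper's: build $T$ from the $\Lambda$-grading of $\OOO(X)$ and then verify minimality and the invariance of $M$ by weight-space bookkeeping. There is, however, one genuine error. You assert that $\Lambda \cong \ZZ^d$ with $d=\dim S$, and hence that $\Lie T = S$. This is false in general: a finitely generated subgroup of $S^*$ whose $\kk$-span is all of $S^*$ can have rank strictly larger than $\dim_\kk S^*$ (take $S$ one-dimensional and two weights $\alpha_1,\alpha_2\in S^*\cong\kk$ that are $\ZZ$-linearly independent; then $\Lambda\cong\ZZ^2$). Correspondingly, the torus you build satisfies only $\Lie T\supset S$, which is precisely what the lemma claims and no more. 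The paper makes this point explicitly just before the lemma: for a two-dimensional torus, a generic one-dimensional subspace of its Lie algebra is toral but is not $\Lie T'$ for any subtorus $T'$.

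Fortunately the error does not propagate. Your minimality argument only uses that the $S$-weights on $\OOO(X)$ generate $\Lambda=X(T)$, and your invariance argument only uses that the $\ad S$-weights on $\VF(X)$ lie in $\Lambda$; both survive once you drop the rank claim and replace ``$\Lie T=S$'' by ``$\Lie T\supset S$''. Two minor corrections: in the minimality step, the ``restriction map $X(T')\to X(T)$'' should be explained as the map $\chi\mapsto d\chi|_S$, which lands in $\Lambda$ because $X(T')$ is generated by characters occurring in $\OOO(X)$ (the $T'$-action being faithful); and in the invariance step, the eigenvalues of $\ad\tau$ on $\VF(X)\hookrightarrow\Ll(V,\OOO(X))$ are differences $b-a$ with $a$ a weight on $V$ but $b$ a weight on all of $\OOO(X)$, not just on $V$ --- still in $\ZZ\Lambda$, which is what you need. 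The paper's own argument for invariance is the slightly cleaner observation that $\delta\in\VF(X)_\gamma$ sends $\OOO(X)_\alpha$ into $\OOO(X)_{\alpha+\gamma}$, forcing $\gamma\in\ZZ\Lambda$.
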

\begin{proof} 
(a)
One easily verifies that the weight decomposition of $\OOO(X)$ has the following properties, cf. 
\cite[Proposition~7.6.1 
and its proof]{FuKr2018On-the-geometry-of}:
\be
\item[$\bullet$]
$\OOO(X)_\alpha \cdot \OOO(X)_\beta \subset \OOO(X)_{\alpha+\beta}$;
\item[$\bullet$]
the set of weights $\Lambda:=\{\gamma\mid \OOO(X)_\gamma\neq 0\} \subset S^*$ generates a free subgroup $\ZZ\Lambda = \bigoplus_{i=1}^k \ZZ \mu_i$.
\ee 
This defines a faithful action of the torus $T:=(\Gm)^k$ on $\OOO(X)$ in the following way: For $\alpha=\sum_i n_i\mu_i \in \Lambda$, 
$f \in \OOO(X)_\alpha$ and $t = (t_1,\ldots,t_k) \in T$ we put
$$
t \, f := t_1^{n_1} \cdots t_k^{n_k} \cdot f.
$$
The corresponding action of $c=(c_1,\ldots,c_k) \in \Lie T=\kk^m$ is then given by the vector field $\delta_c$ where 
$$
\delta_c|_{\OOO(X)_\alpha} = \text{scalar multiplication with }\textstyle \sum_i n_i c_i \text{ for }\alpha = \sum_i n_i\mu_i \in \Lambda. 
$$ 
It follows that $\Lie T$ contains $S$ and that every torus with this property contains $T$, see 
\cite[proof of Proposition~7.6.1(2)]{FuKr2018On-the-geometry-of}.
\ps
(b)
With respect to the adjoint action of $S$ on $\VF(X)$ we also have a weight space decomposition 
$\VF(X) = \bigoplus_{\gamma \in  S^*} \VF(X)_\gamma$  where
$$
\VF(X)_\gamma :=\{\delta \in \VF(X) \mid (\ad\tau)\delta = \gamma(\tau)\cdot\delta \text{ for }\tau\in S\},
$$
see Lemma~\ref{adjoint.lem}. Equivalently, we have
$$
\VF(X)_\gamma :=\{\delta \in \VF(X) \mid
\delta(\OOO(X)_\alpha) \subset \OOO(X)_{\alpha+\gamma}
\text{ for }\alpha\in S^*\}.
$$
It follows that if $\VF(X)_\gamma \neq 0$, then $\gamma \in \ZZ\Lambda$. In fact, if $\delta \in\VF(X)_\gamma$, $\delta\neq 0$, then there is an $\alpha\in\Lambda$ and a non-zero $f \in \OOO(X)_\alpha$ such that $\delta f \neq 0$. Thus $\alpha$ and $\alpha+\gamma$ belong to $\Lambda$, and so $\gamma \in \ZZ\Lambda$. 
Hence, the subspaces $\VF(X)_\gamma$ are weight spaces of the action of $T$ on $\VF(X)$, and thus every $S$-invariant subspace of $\VF(X)$ is invariant under $T$.
\end{proof}
\ps
\subsection{Integration of Lie algebras}\label{integrationLA.subsec}
The first part of the following result is due to \name{Cohen} and \name{Draisma}, see 
\cite[Theorem 1]{CoDr2003From-Lie-algebras-}.  

\begin{thmE}
Let $L\subset \VF(X)$ be a Lie subalgebra. Then $L \subset \Lie G$ for
some algebraic subgroup $G \subset \Aut(X)$ if and only if
$L$ is locally finite. If $L$ is locally finite and generated by locally nilpotent elements, then $L$ is algebraic, i.e. there is an algebraic group $G\subset \Aut(X)$ such that $L = \Lie G$.
\end{thmE}
The proof will be given in Section~\ref{integration.subsec}, see Theorem~\ref{CD.thm}.
A weaker form of Question~\ref{Q2} is the following.
\begin{que} 
Let $\xi,\eta \in \VF(X)$ be locally finite vector fields. If the Lie subalgebra $L:=\langle \xi,\eta \rangle_{\text{\tiny Lie}}$ 
generated by $\xi$ and $\eta$ is finite dimensional, does it follow that $L$ is locally finite? 
\end{que}
From Theorem~E we get the following result in this direction.
\begin{cor*}
Let $\{\eta_i\mid i\in I\}$ be a family of locally nilpotent vector fields, and denote by $L:=\langle \eta_i \mid i \in I\rangle_{\text{\tiny Lie}}$ 
the Lie subalgebra generated by the $\eta_i$. If $L$ is finite dimensional, then $L$ is locally finite and algebraic.
\end{cor*}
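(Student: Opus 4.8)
The plan is to deduce this corollary from Theorem~C. The hypotheses of Theorem~C require two things of $L$: that it be locally finite, and (for the stronger conclusion $L = \Lie G$) that it be generated by locally nilpotent elements. The second is given by hypothesis. So the entire content of the proof is to verify that a \emph{finite dimensional} Lie subalgebra $L \subset \VF(X)$ that is generated by locally nilpotent vector fields is automatically locally finite. Once this is established, Theorem~C applies verbatim and gives an algebraic group $G \subset \Aut(X)$ with $L = \Lie G$; in particular $L$ is algebraic, and algebraic Lie subalgebras are locally finite (being the Lie algebra of an algebraic subgroup of $\Aut(X)$), which also yields the "locally finite" half of the statement.

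To prove that $L$ is locally finite, I would argue as follows. Since each $\eta_i$ is locally nilpotent, by the remark after the Corollary in Section~\ref{lofi-VF.subsec} there is a one-dimensional unipotent subgroup $G_i \simeq \Ga$ of $\Aut(X)$ with $\kk\eta_i \subset \Lie G_i$; in fact $\Lie G_i = \kk \eta_i$. Thus $L$ is exactly the Lie subalgebra of $\VF(X)$ generated by the $\Lie G_i$, where the $G_i$ are connected algebraic (indeed unipotent) subgroups of $\Aut(X)$. This is precisely the setup of Theorem~A: we have an algebraically generated subgroup $G := \langle G_i \mid i \in I\rangle \subset \Aut(X)$, and the Lie subalgebra $L$ generated by the $\Lie G_i$ is, by hypothesis, finite dimensional. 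Theorem~A then asserts that $G$ is an algebraic subgroup of $\Aut(X)$ and $\Lie G = L$. Being the Lie algebra of an algebraic subgroup of $\Aut(X)$, $L = \Lie G$ is locally finite (this is the Example in Section~\ref{lofi-VF.subsec}). This closes the circle: $L$ is locally finite and generated by locally nilpotent elements, so Theorem~C gives an algebraic group $H$ with $L = \Lie H$ (one can take $H = G$), and $L$ is algebraic.

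I do not expect any serious obstacle here: the statement is essentially a repackaging, and in fact it is subsumed by the Corollary in Section~\ref{lofi-VF.subsec} once one observes that $\kk\eta_i = \Lie G_i$ for the minimal $\Ga$. The only point requiring a word of care is the identification of $\kk\eta_i$ with $\Lie G_i$ for a \emph{one-dimensional} group — that $\eta_i \ne 0$ may be assumed (the zero vector field contributes nothing to $L$), and then the cited Proposition~7.6.1 of \cite{FuKr2018On-the-geometry-of} guarantees $G_i \simeq \Ga$ with $\Lie G_i = \kk\eta_i$. With that in hand the argument is immediate; alternatively, one invokes the Corollary of Section~\ref{lofi-VF.subsec} directly, since each $L_i := \kk\eta_i$ is algebraic.
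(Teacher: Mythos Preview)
Your argument is correct and matches the paper's approach: the paper derives the corollary from Theorem~C, whose proof in Section~\ref{integration.subsec} (Theorem~\ref{CD.thm}(1)) is exactly your construction---integrate each $\eta_i$ to $U_i\simeq\Ga$ via \cite[Proposition~7.6.1]{FuKr2018On-the-geometry-of} and apply Theorem~A (Theorem~\ref{main2.thm}). Your only detour is framing it as ``first show $L$ is locally finite, then invoke Theorem~C,'' which is redundant since Theorem~A already yields $L=\Lie G$ with $G$ algebraic in one stroke; you note this yourself at the end.
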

\ps
\subsection{A free subgroup of \tp{$\Aut(\Atwo)$}{Aut(A2)} and its closure} \label{sec:free-sbgrp}
In Section~\ref{sec:example} we provide an example of a 
nonabelian free subgroup  $F
\simeq \mathbb{F}_2$ of $\Aut(\Atwo)$ generated by two locally finite elements whose product is not locally finite. 
In particular, $F$ is not locally finite. We compute its closure  $\F:=\overline{F}\subset \Aut(\Atwo)$ and describe the 
Lie algebra $\Lie\F$.  It occurs that $\F$ coincides with the double centralizer of $F$
(Lemma~\ref{example.lem}) and is a free product of two abelian nested unipotent ind-subgroups $\J$ and $\J^-$ (Theorem~\ref{example.thm}(3)). 
Furthermore, 
any algebraic subgroup $G$ of $\F$ is abelian and unipotent and is conjugate to a subgroup of $\J$ or $\J^-$
(Theorem~\ref{example.thm}(4)).

The construction shows that $\F$ contains two algebraic subgroups $U$ and $V$, both isomorphic to $\Ga$, with the following properties: 
$$
\langle U, V \rangle = U*V, \quad
\F = \overline{\langle U,V \rangle} \quad \text{and} 
\quad \Lie \F = \langle \Lie U, \Lie V\rangle_{\text{\tiny Lie}},
$$
see Lemma~\ref{example-lie.lem}.
This is an instance where Question~\ref{Q1} has a positive answer.
\par\medskip
Yet another instance is the following. Consider the ind-group $\G :=\Aut_{\KKK}(\KKK[x,y])$ where $\KKK:=\kk[z]$, 
and let $\G^t\subset \G$ be the subgroup consisting of tame automorphisms 
of $\kk[x,y,z]$. Then $\G$  is connected and closed in $\Aut_\kk(\kk[x,y,z])$, $\G^t\subsetneqq\G$ is closed and 
$\Lie\G^t=\Lie\G$, see \cite[Theorem~17.3.1]{FuKr2018On-the-geometry-of}. The group $\G^t$ is generated by the affine and the de Jonqui\`eres
transformations which fix $z$ (cf. \cite[Remark~17.3.10]{FuKr2018On-the-geometry-of}). 
Since the de Jonqui\`eres group is nested (Remark~\ref{rem1}), it is alge\-braically generated (Remark~\ref{AG.rems}(1)), and so $\G^t$ is algebraically generated. 
By the  \name{Umirbaev-Shestakov} Theorem, the
one dimensional  unipotent subgroup $U$ of $\G$ corresponding to the locally nilpotent Nagata derivation is not contained in $\G^t$. 
However, since the algebraically generated group $\H:=\langle \G^t, U\rangle$ is contained in $\G$ and $\Lie U\subset \Lie\G =\Lie\G^t$ we have 
\[
\Lie\overline{\H} = L_{\H}:= \langle \Lie\G^t, \Lie U\rangle = \Lie\G.
\] 
\ps
\subsection{Ind-group actions and path-connected subsets}\label{path-connected.subsec}
The theory of algebraic group actions on affine varieties can be generalized, to some extent, to ind-groups acting on affine ind-varieties,
see \cite{FuKr2018On-the-geometry-of}. If $\G$ is an ind-group, then a $\G$-variety
$X$ is a variety with a $\G$-action such that the map $\G\times X \to X$
is a morphism of ind-varieties. 

A subset $M \subset \X$ of an ind-variety $\X$ is called {\it path-connected} if for any two points $x,y \in M$ 
there is an irreducible variety $Y$ and a morphism $\phi\colon Y \to M$ such that $x,y \in \phi(Y) \subset \X$, 
cf. \cite[p.~26]{Ra1964A-note-on-automorp}. 
One can always assume that $Y$ is an irreducible curve, because any two points in an irreducible variety are contained in an irreducible curve, 
cf. \cite[Section~1.6]{FuKr2018On-the-geometry-of}. A connected ind-variety is not necessarily path-connected (\cite[Example~1.6.5]{FuKr2018On-the-geometry-of}), but a connected ind-group is path-connected (\cite[Proposition~2.2.1]{FuKr2018On-the-geometry-of}).

Note that a path-connected subset of an algebraic variety is not necessarily closed; take $\CC^2$ and remove a ball around the origin. However, we will see that a path-connected subgroup of an algebraic group is closed, cf. Corollary~\ref{path-connected-subgroup.cor}.

In contrast to the finite-dimensional case, a strict closed subgroup of a path-connected ind-group may have the same 
Lie algebra, see \cite[Theorem 17.3.1]{FuKr2018On-the-geometry-of} and the example in Section~\ref{sec:free-sbgrp}. 
This disproves \cite[Theorems 1 and 2]{Sh1981On-some-infinite-d}. 
However, a homomorphism of a path-connected ind-group to an ind-group is uniquely determined by the induced homomorphism 
of their Lie algebras, see \cite[Proposition~7.4.7]{FuKr2018On-the-geometry-of}. 

In the last section~\ref{canonical-LA.sec} we define a canonical Lie subalgebra $L_\G \subset \Lie \G$ for an ind-group $\G$ and show that for a surjective homomorphism $\G \to \H$ the corresponding homomorphism $L_\G \to L_\H$ is also surjective. We do not know if this holds for $\Lie\G \to \Lie\H$. We also study subgroups $\H \subset \G$ of finite codimension and show that under additional assumptions $L_\H$ has finite codimension in $L_\G$.

\par\bigskip
\section{Algebraically generated subgroups}\label{sec:AGG}
\subsection{Orbits of algebraically generated subgroups}\label{subsec:orbits-of-agg}
Let $\G=\bigcup_{k}\G_{k}$ be an ind-group.
The following concept was introduced in \cite{ArFlKa2013Flexible-varieties}.
\begin{defn}\label{AGgroup.def}
A subgroup $G \subset \G$ is called {\it algebraically generated} if there is
a family $(G_{i})_{i\in I}$ of connected algebraic subgroups $G_{i}\subset \G$
generating $G$ as an abstract group: $G=\langle G_{i}\mid i\in I\rangle$.
\end{defn}
It is clear that an algebraically generated subgroup is path-connected (section~\ref{path-connected.subsec}), hence connected. The following result
can be found in  \cite[Prop.\ 1.2 and 1.3]{ArFlKa2013Flexible-varieties}. 
For the reader's convenience we provide a short argument. 
\begin{prop}\label{orbit-agg.prop}
Let $G=\langle G_{i}\mid i\in I\rangle \subset \Aut(X)$ be an algebraically generated subgroup.
For every $x\in X$ the $G$-orbit $G x \subset X$ is open in its closure $\overline{G x}$.
Moreover, there is a finite sequence $j_{1},\ldots, j_{n}\in I$ such that
$G x = G_{j_{1}}\cdots G_{j_{n}} x$ for all $x\in X$.
\end{prop}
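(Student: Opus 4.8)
The plan is to prove the two assertions together by a dimension/genericity argument on products of the algebraic subgroups. First I would fix notation: for a finite word $w = (i_1,\dots,i_n)$ in the index set $I$ and a point $x \in X$, consider the subset $G_{i_1}\cdots G_{i_n} x \subset X$, which is a constructible subset of $X$ since each $G_{i_j}$ is a connected algebraic group acting morphically and the product map $G_{i_1}\times\cdots\times G_{i_n}\times\{x\}\to X$ is a morphism. Since $G = \bigcup_w G_{i_1}\cdots G_{i_n}$ (ranging over all finite words, where we may always insert repetitions and use that each $G_i$ contains $e$), we have $Gx = \bigcup_w G_{i_1}\cdots G_{i_n}x$, an increasing union of constructible sets. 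Each constructible set has a well-defined dimension of its closure, bounded above by $\dim X$, so there is a word $w^0 = (j_1,\dots,j_m)$ for which $\dim \overline{G_{j_1}\cdots G_{j_m}x}$ is maximal; moreover, since the closures are irreducible (images of irreducible varieties) and increasing, I can choose $w^0$ so that the irreducible closed set $Z := \overline{G_{j_1}\cdots G_{j_m}x}$ is maximal for inclusion among all such closures, and then $Z = \overline{Gx}$.

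The key step is then to show $Gx = G_{j_1}\cdots G_{j_m}x$ for a suitable (possibly longer) such word. Set $P := G_{j_1}\cdots G_{j_m}x$; it is a constructible subset dense in the irreducible variety $Z = \overline{Gx}$, hence contains a dense open subset $P^\circ$ of $Z$. By maximality, for \emph{every} index $i\in I$ we have $G_i\cdot P \subset \overline{G_iG_{j_1}\cdots G_{j_m}x} = Z$ (its closure cannot be strictly larger). Now I would use the standard trick: $P$ is dense and constructible in $Z$, so $P\cap P$ — more precisely, I want to see that translating $P$ by group elements sweeps out all of $Z$. Concretely, pick any $y\in Z$; I claim $y\in Gx$. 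Since $Gx$ is a subgroup orbit, it suffices to show $Gx$ is open in $Z$, equivalently that $Gx$ contains a nonempty open subset of $Z$ and is a union of $G$-translates. The orbit $Gx$, being an increasing union of the constructible sets $G_{i_1}\cdots G_{i_n}x$ all contained in $Z$, contains one of maximal dimension $=\dim Z$, hence contains a dense open $O\subset Z$; and $Gx = \bigcup_{g\in G} gO' $ for the corresponding preimage, but more cleanly: $Gx \supset O$ and $Gx$ is $G$-stable while $O$ is dense, so for $y\in Z$ arbitrary, $yO^{-1}\cdots$ — the clean formulation is that two dense constructible subsets of an irreducible variety must meet, so $gO \cap O \neq \emptyset$ can be arranged, giving $Gx = Z$ as a set once $Gx$ is open; openness of $Gx$ in $\overline{Gx}$ then follows because an orbit is a union of translates of the open set $O$. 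Finally, to get the \emph{uniform} word working for all $x$ simultaneously: I would run the above with a generic point, using that the function $x\mapsto \dim Gx$ is lower semicontinuous and the construction of the maximizing word can be done on a suitable stratification; alternatively, and more simply, once one knows $Gx$ is open in $\overline{Gx}$ for each $x$, apply the "two dense subsets meet" argument to conclude $(G_{j_1}\cdots G_{j_m}x)\cdot(G_{j_1}\cdots G_{j_m}x)^{-1}$-type products stabilize, so that doubling the word, $G_{j_1}\cdots G_{j_m}G_{j_m}^{-1}\cdots$, no: instead note $G_i\cdot P^\circ$ meets $P^\circ$ for each $i$ (both dense in $Z$), whence $G_iP = P$, and iterating over a word generating $G$ shows $GP = P$, i.e. $P = Gx$. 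The uniformity in $x$ comes for free because the \emph{same} word $w^0$ works: the argument only used that $w^0$ achieves maximal orbit-closure dimension, and one can pick $w^0$ achieving the maximum of $\dim\overline{Gx}$ over a dense set of $x$ and extend by semicontinuity, spelling out that on the locus where $\dim Gx$ is constant the orbit map behaves uniformly.

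The main obstacle I anticipate is precisely this last point — arranging a \emph{single} finite word $(j_1,\dots,j_n)$ that computes $Gx$ for \emph{all} $x\in X$, not just for one point or for generic points. For a fixed $x$ the increasing-union-of-constructibles argument is routine (it is Rosenlicht-type reasoning), but the uniform statement requires either a Noetherian induction on closed $G$-stable subsets of $X$ (pass to the closed complement of the locus where a given word already works, and note that locus is $G$-stable and closed, then induct), or an explicit use of upper semicontinuity of $x\mapsto \operatorname{codim}_X \overline{Gx}$ together with the observation that enlarging the word never decreases any orbit. I would go with the Noetherian induction: let $X_{\max}\subset X$ be the (locally closed, $G$-stable) stratum where $\dim Gx$ is maximal; find a word working on all of $X_{\max}$; its complement $X'$ is closed and $G$-stable of smaller dimension; by induction a word works on $X'$; concatenating the two words (and using that appending factors only enlarges orbits, so the combined word still computes $Gx$ on each piece) yields the desired uniform word. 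The length $n$ is then finite since $X$ is Noetherian.
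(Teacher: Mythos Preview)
Your overall architecture---maximal-dimension word, constructibility, openness of $Gx$ via translates of a dense open, and Noetherian induction on $X$ for the uniform word---matches the paper's proof. But there is a genuine gap at the step where you pass from ``$G_i\cdot P^\circ$ meets $P^\circ$'' to ``$G_iP=P$''. That implication is false: the two dense sets meeting only tells you that \emph{some} element of $G_i$ carries \emph{some} point of $P^\circ$ into $P^\circ$, not that $G_i$ stabilizes $P$. Concretely, take $G_1,G_2\subset\SLtwo$ the upper and lower unipotent $\Ga$'s acting on $\Atwo$ and $x=(1,0)$; then $P:=G_1G_2x$ is dense constructible in $\Atwo=\overline{Gx}$, yet $G_2P\supsetneq P$ (indeed $G_2G_1G_2x=\Atwo$ while $P$ misses e.g.\ the origin). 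So your chosen word need not already compute the orbit, and ``iterating'' the false identity $G_iP=P$ does not help.

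The idea you wrote down and then abandoned---the palindrome/doubling word---is the correct one, but it only works once you know the density statement \emph{uniformly along the orbit}, and this is precisely where the order of the argument matters. The paper first uses lower semicontinuity of $y\mapsto\dim\overline{G_{j_1}\cdots G_{j_m}y}$ to produce a $G$-stable open set $U\subset X$ (the locus of maximal orbit dimension) and a single word $P=G_{j_1}\cdots G_{j_m}$ with $\overline{Py}=\overline{Gy}$ for \emph{every} $y\in U$. Only then does the ``two dense constructibles meet'' trick apply: for $y\in U$ and any $z\in Gy\subset U$, both $Py$ and $Pz$ are dense in $\overline{Gy}$, hence intersect, giving $z\in P^{-1}Py$; thus the palindrome $G_{j_m}\cdots G_{j_1}G_{j_1}\cdots G_{j_m}$ computes $Gy$ for all $y\in U$. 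Your Noetherian induction on $X\setminus U$ then finishes the job exactly as you outlined. In short: establish the uniform density on a $G$-stable open set \emph{before} attempting to pin down the orbit with a fixed word, and use $P^{-1}P$ rather than the hoped-for stability $G_iP=P$.
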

\begin{proof}
(a) Choose a sequence $i_{1},\ldots, i_{m}$ such that $\dim
\overline{G_{i_{1}}\cdots G_{i_{m}} x}$ is maximal. Since
$\overline{G_{i_{1}}\cdots G_{i_{m}} x}$ is irreducible, it
follows that $G_{j_{1}}\cdots G_{j_{\ell}} x \subset
\overline{G_{i_{1}}\cdots G_{i_{m}} x}$ for every sequence
$j_{1},\ldots,j_{\ell}\in I$. Hence $\overline{G_{i_{1}}\cdots
G_{i_{m}} x} = \overline{Gx}$, and there is an open dense set $U
\subset \overline{Gx}$ contained in $G_{i_{1}}\cdots G_{i_{m}} x$.
This implies that $Gx = GU$ is open in $\overline{Gx}$. 
\ps
(b) For the second claim we first remark that for every $x\in X$ with
$\dim Gx$ maximal there is an open set $U_{x}\subset X$ containing
$x$ and a sequence $\ii_{x}=(i_{1},\ldots,i_{m})$, depending on
$x$, such that $\overline{G_{i_{1}}\cdots G_{i_{m}} y} =
\overline{Gy}$ for all $y\in U_{x}$. Indeed, if $d:=\dim Gx$ is
maximal and $Gx \subset \overline{G_{i_{1}}\cdots G_{i_{m}} x}$,
then $\dim \overline{G_{i_{1}}\cdots G_{i_{m}} y} = d$ for all $y$
in an open neighborhood of $x$. Now the union $U:=\bigcup_{x}
U_{x}$ is a $G$-invariant open set, hence covered by finitely many
$U_{x_{j}}$. By joining the finitely many sequences $\ii_{x_{j}}$
corresponding to the points $x_{j}$, we obtain a sequence
$(j_{1},\ldots,j_{\ell})$ such that $\overline{G_{j_{1}}\cdots
G_{j_{\ell}}y} = \overline{Gy}$ for all $y\in U$. A standard
argument implies that $G_{j_{\ell}} G_{j_{\ell-1}}\cdots
G_{j_{2}}G_{j_{1}}G_{j_{2}}\cdots G_{j_{\ell}}y = Gy$ for all
$y\in U$. Since $X \setminus U$ is a closed $G$-invariant subvariety
of smaller dimension, the claim follows by induction.
\end{proof}

\begin{rem} A similar result holds for the action of a connected ind-group $\G$ on an affine variety $X$,
see \cite[Proposition~7.1.2]{FuKr2018On-the-geometry-of}.
\end{rem}

\ps
\subsection{The multiplication map}\label{mult-map.subsec}
In the following lemma we use the adjoint action $\Ad$ of $\G$ on its Lie algebra $\Lie\G$, cf. \cite{FuKr2018On-the-geometry-of}. Moreover, for $g\in \G$, we denote by $\rho_g\colon \G \simto \G$ the right multiplication with  $g$.
\begin{lem}\label{crucial.lem1}
Let $G_1,\ldots,G_m \subset \G$ be connected algebraic subgroups
and consider the ``multiplication map'' $\phi\colon G_1 \times \cdots \times G_m \to \G$.
Fix $g_i \in G_i$, $i=1,\ldots,m$, and set $g:=g_1\cdots g_m$.
Then the image of the composition
\[
d(\rho_{g^{-1}})_g \circ d\phi_{(g_1,\ldots,g_m)}\colon T_{g_1} G_1 \oplus \cdots \oplus T_{g_m} G_m \to \Lie\G
\] 
is given by $\sum_i \Ad h_i (\Lie G_i)$ where $h_1=e$ and $h_i:=g_1\cdots g_{i-1}$ for $i>1$.
\end{lem}
\begin{proof} 
The restriction of the composition $\rho_{g^{-1}}\circ \phi$ to 
$$
\{g_1\}\times\cdots\times\{g_{i-1}\}\times G_i \times\{g_{i+1}\}\times\cdots\times\{g_n\}
\subset G_1 \times\cdots\times G_n
$$
is given by 
$$
\phi_i\colon G_i \to \G,\quad  h \mapsto g_1\cdots g_{i-1} h g_{i+1}\cdots g_n g^{-1} = h_i (hg_i^{-1}) h_i^{-1}
$$
i.e. $\phi_i = \Int h_i \circ \rho_{g_i^{-1}}$, and so the image of $T_{g_i}G_i$ under $d(\phi_i)_{g_i}$ is equal to $\Ad h_i (\Lie G_i)$.
\end{proof}

In case of an automorphism group $\G = \Aut(X)$ we can reformulate the lemma above in terms of vector fields. Recall that the action of $\Aut(X)$ on the vector fields is given by $\Ad g(\delta)(gx) = d\mu_g \, \delta(x)$, see section~\ref{VF.subsec}.

If $L \subset \VF(X)$ is a subspace, then $L(x) \subset T_xX$ denotes the image of $L$ under the evaluation map $\ev_x\colon \VF(X) \to T_x X$. 
\begin{lem}\label{crucial.lem2}
Let $G_1,\ldots,G_m \subset \Aut(X)$ be connected algebraic subgroups. Set $L_i:=\Lie G_i \subset \VF(X)$ and consider the ``orbit map''
\[
\theta_x \colon G_1 \times \cdots \times G_m \to X, \quad \bg:=(g_1,\ldots,g_m)\mapsto \bg x:=g_1\cdots g_m x.
\]
Then the image of the differential $(d\theta_x)_{\bg} \colon T_{g_1} G_1\oplus\cdots\oplus T_{g_m} G_m \to T_{\bg x}X$ is given by 
$\sum_i d\mu_{h_i} L_i(g_{i}\cdots g_m x)$.
\end{lem}
\begin{proof}
By Lemma~\ref{crucial.lem1} above the image of the differential $(d\theta_x)_\bg$ is given by $\sum_i \Ad h_i (L_i) (\bg x)$. Since $\Ad h_i (L_i) (\bg x) = d\mu_{h_i} L_i(g_i\cdots g_m x)$ the claim follows.
\end{proof}

\ps
\subsection{Lie algebras of algebraically generated groups}\label{liealg-agg.subsec}
For an algebraically generated subgroup $G=\langle G_{i}\mid i\in I\rangle \subset \G$ of the ind-group $\G$ we define
$$
L(G):=\langle \Lie G_{i}\mid i\in I\rangle_{\text{\rm Lie}}\subset \Lie\G
$$ 
to be the Lie subalgebra
generated by the $\Lie G_{i}$. If $Z \subset \G$ is an algebraic subset and $g \in Z$ we can use the right multiplication with $g^{-1}$ to send $T_g Z$ into $T_e \G = \Lie \G$: $d\rho_{g^{-1}}\colon T_g Z \into \Lie \G$.
Note that for an algebraic subgroup $G \subset\G$ this image of $T_gG$ is equal to $\Lie G$ for any $g \in G$.
\begin{thm}\label{main1.thm}
Let $G = \langle G_{i}\mid i\in I \rangle \subset \G$ be an algebraically generated subgroup.
\be
\item\label{item1}
$L(G)$ is stable under the adjoint actions of $G$ and of $\overline{G} \subset \G$. 
In particular, $L(G)$ is an ideal in $\Lie\overline{G}$.
\ee
Assume in addition that the index set $I$ is countable and that the base field $\kk$ is uncountable.
\be
\item[(2)]
If $Z \subset \G$ is an algebraic subset contained in $G$, then the image of $T_gZ$ under $d(\rho_{g^{-1}})_g$ is contained in $L(G)$ for any $g$ in an open dense subset of $Z$.
\item[(3)]
If $H \subset \G$ is an algebraic subgroup contained in $G$, then $\Lie H \subset L(G)$. In particular, $L(G)$ 
depends only on $G$ and not on the generating subgroups $G_{i}$.
\ee
\end{thm}
\begin{rem}
We do not know if the additional assumptions for the statements (2) and (3) are necessary.
\end{rem}
\begin{proof}
(1)
It suffices to show that  $L(G)$ is invariant under all the $G_{i}$. Since $G_{i}$ is connected, we are reduced to prove that $L(G)$ is invariant under the adjoint action of $\Lie G_i$ (see \cite[Proposition 6.3.4]{FuKr2018On-the-geometry-of}) which holds by construction.
\ps
(2)
We first show that $Z \subset G_{i_1}\cdots G_{i_m}$ for a suitable sequence $(i_1,\ldots,i_m)$. We can clearly assume that $Z$ is irreducible. Since the index set $I$ is countable the group $G$ is the union of countably many products $G_{i_1}\cdots G_{i_m}$, and so $Z$ is the union of countably many constructible subsets $Z\cap G_{i_1}\cdots G_{i_m}$. Since the base field $\kk$ is uncountable we get $Z = Z\cap G_{i_1}\cdots G_{i_m}\subset G_{i_1}\cdots G_{i_m}$ for a suitable sequence $(i_1,\ldots,i_m)$ (\cite[Lemma~1.3.1]{FuKr2018On-the-geometry-of}).

Now consider the multiplication map $\phi\colon G_{i_1}\times \cdots\times G_{i_m} \to \G$. The induced morphism $\phi'\colon \phi^{-1}(Z) \to Z$ is smooth on a non-empty open set $U \subset \phi^{-1}(Z)$ where we can assume that $\phi(U) \subset Z_\reg$. Since $d\phi_u\colon T_u\phi^{-1}(Z) \to T_{\phi(u)} Z$ is surjective for all $u \in U$ it follows from Lemma~\ref{crucial.lem1} that the image of the tangent space $T_{\phi(u)} Z$ in $\Lie\G$ is contained in 
$\sum_k \Ad h_{i_k}(\Lie G_{i_k}) \subset L(G)$ where the latter inclusion follows from (1).
\ps
(3)
This is an immediate consequence of (2).
\end{proof}

Now consider the case where $G=\langle G_{i}\mid i\in I\rangle \subset \Aut(X)$ and so $L(G) \subset \Lie\Aut(X) \subset \VF(X)$.
If $Y \subset X$ is a $G$-stable closed subset, then
$\Lie G_{i}|_{Y}\subset \VF(Y)$ for all $i$ and so $L(G)|_{Y}\subset \VF(Y)$.
Choosing $Y:=\overline{Gx}$ we get  $L(G)(x) \subset T_{x}Gx$ for any $x\in X$.

\begin{prop}
We have  $T_{x}Gx = L(G)(x)$ for all $x\in X$. In particular, any vector field in $L(G)$
is tangent to the $G$-orbits.
\end{prop}
\begin{proof}
We already know that $L(G)(x) \subset T_x Gx$. Now we choose a surjective ``orbit map'' 
\[
\theta_x\colon G_{j_1}\times \cdots \times G_{j_n} \to Gx \subset X,
\]
see Proposition~\ref{orbit-agg.prop}. It follows that for a suitable $\bg = (g_1,\ldots,g_n)$ the differential 
\[
(d\theta_x)_{\bg} \colon T_{g_1}G_{j_1} \oplus \cdots \oplus T_{g_n} G_{j_n} \to T_{\bg x} Gx
\]
is also surjective. Thus we get $T_{\bg x} Gx = \sum_j d\mu_{h_j} L_j(g_{j}\cdots g_n x)$,
by Lemma~\ref{crucial.lem2}. 
By Theorem~\ref{main1.thm}(1) $L(G)$ is invariant under the action of $G$, and so
\[
d\mu_{h_j} L_j(g_{j}\cdots g_n x) \subset d\mu_{h_j} L(G)(g_{j}\cdots g_n x) =  L(G)(\bg x),
\] 
and the claim follows.
\end{proof}

\ps
\subsection{Ramanujam's alternative}\label{ramanujam}
The following result is due to \name{Ramanujam} (\cite[Theorem on p.~26]{Ra1964A-note-on-automorp}).
\begin{prop}\label{Ramanujam.prop}
Let $G \subset \G$ be a path-connected subgroup of an ind-group $\G$. Then one of the following holds:
\be
\item[{\rm (i)}] $G$ is an algebraic subgroup;
\item[{\rm (ii)}] $G$ contains algebraic subsets of arbitrary large dimension.
\ee
\end{prop}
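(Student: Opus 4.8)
The plan is to follow \name{Ramanujam}'s original dichotomy: either the path-connected subgroup $G$ is generated by a \emph{bounded} family of algebraic subsets (forcing it to be algebraic) or it is not (yielding algebraic subsets of arbitrary dimension). First I would set up the relevant notion: since $G$ is path-connected, for any two points $g, h \in G$ there is an irreducible curve $C$ and a morphism $\phi\colon C \to \Aut(X)$ with $g, h \in \phi(C) \subset G$. Replacing $\phi$ by $c \mapsto g^{-1}\phi(c)$ and passing to the image, one reduces to algebraic subsets $Z \subset G$ containing $e$; and by replacing $Z$ with $Z \cup Z^{-1}$ one may assume $Z = Z^{-1}$. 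Hence $G = \bigcup_{n \geq 1} Z^n$ where $Z$ ranges over all symmetric algebraic subsets of $G$ containing $e$, and in fact $G$ is the increasing union of the constructible sets $Z_1 Z_2 \cdots Z_n$ as the $Z_i$ range over such subsets.

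Next I would carry out the main case distinction. Suppose first that there is a bound $d$ such that every algebraic subset of $G$ has dimension $\leq d$ — this is the negation of (ii). Among all products $P = Z_1 \cdots Z_n$ (each $Z_i$ a symmetric algebraic subset of $G$ containing $e$, so $P$ is constructible, irreducible once the $Z_i$ are irreducible, and still of dimension $\leq d$) choose one, call it $P_0$, with $\dim \overline{P_0}$ maximal; this maximum exists and is $\leq d$. By the usual argument (as in the proof of Proposition~\ref{orbit-agg.prop}), maximality forces $\overline{Q} \subset \overline{P_0}$ for every such product $Q$, in particular $P_0 \cdot P_0 \subset \overline{P_0 P_0} = \overline{P_0}$ and $P_0^{-1} \subset \overline{P_0}$; moreover $P_0$ contains a dense open subset $U$ of the irreducible closed set $\overline{P_0}$. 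Then $U U^{-1}$ is dense in $\overline{P_0}$ and, by the standard fact that a dense constructible subset $U$ of an irreducible variety satisfies $U \cdot U^{-1} = $ the whole variety when a group acts, one gets $\overline{P_0} = U U^{-1} \subset G$, so $\overline{P_0}$ is a closed algebraic subset of $\Aut(X)$ that is a subgroup and equals $G$ (since it contains $U$ and $G = \bigcup P_0^n \subset \bigcup \overline{P_0} = \overline{P_0}$). By \cite[Proposition~2.7.1]{FuKr2018On-the-geometry-of} a subgroup of $\Aut(X)$ that is algebraic as a subset is an algebraic subgroup, giving (i).

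Otherwise, for every $d$ there is an algebraic subset $Z_d \subset G$ with $\dim Z_d > d$, which is precisely (ii), and we are done. The step I expect to be the main obstacle is the "group trick" $U \cdot U^{-1} = \overline{P_0}$: one must argue carefully that a dense constructible subset $U$ of the irreducible \emph{constructible} (not a priori algebraic variety in the classical sense, but contained in some $\Aut(X)_k$, hence genuinely a variety) set $\overline{P_0}$ satisfies this. The clean way is to note $\overline{P_0}$ lies in a single $\Aut(X)_k$, so it is an honest algebraic variety, $U$ is a dense open subset, and then for any $p \in \overline{P_0}$ both $U$ and $pU^{-1}$ are dense open, hence meet, giving $p \in U U^{-1}$. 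Everything else is a routine transcription of the finite-dimensional maximal-orbit argument into the ind-setting, using that algebraic subsets are by definition contained in some filtration piece $\Aut(X)_k$.
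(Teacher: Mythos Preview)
Your approach is essentially the paper's: negate (ii), pick an algebraic subset of maximal dimension, show its closure equals $\overline{G}$, then use the density trick to get $\overline{G}\subset G$. The paper's execution is leaner---it takes a \emph{single} irreducible $Y\subset G$ of maximal dimension with $e\in Y$, and for any $g\in G$ picks (via path-connectedness) an irreducible $S\subset G$ containing $e$ and $g$; maximality forces $\overline{S\cdot Y}=\overline{Y}$, so $g\in\overline{Y}$, hence $\overline{G}=\overline{Y}$ is an algebraic group, and $Y^{-1}\cdot Y=\overline{G}\subset G$.

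There is one genuine slip in your write-up: replacing $Z$ by $Z\cup Z^{-1}$ generally destroys irreducibility, so your products $P=Z_1\cdots Z_n$ need not be irreducible, and then the maximality step ``$\overline{Q}\subset\overline{P_0}$ for every such product $Q$'' fails (it uses that $\overline{P_0}$ and $\overline{P_0 Q}$ are irreducible of the same dimension with one contained in the other). The fix is immediate---drop the symmetrization and work with products of irreducible algebraic subsets containing $e$, noting that $P_0^{-1}=Z_n^{-1}\cdots Z_1^{-1}$ is again such a product---but once you do this you have essentially the paper's argument with extra bookkeeping. Also, in your ``main obstacle'' paragraph, the reason $pU^{-1}$ is dense open in $\overline{P_0}$ is that $\overline{P_0}$ is already a \emph{group} (this follows by continuity from $P_0P_0\subset\overline{P_0}$ and $P_0^{-1}\subset\overline{P_0}$, which you established); you should say this explicitly before invoking the translation argument.
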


\begin{proof}
Assume that we are not in case (ii), and let $Y \subset G$ be an irreducible algebraic subset
of maximal dimension $d$. We may assume that $e\in Y$. We claim that $G \subset
\overline{Y}=\overline{G}$. 

Since $G$ is path-connected we can find, for any $g \in G$, an irreducible variety $C_g$ and a morphism 
$\phi_g\colon C_g \to \G$ such that $\phi_g(C_g) \subset G$ and $e,g \in \phi_g(C_g)$. Clearly, 
$\phi_g(C_g)\cdot Y \subseteq G$, and the closure $\overline{\phi_g(C_g)\cdot Y} \subset \G$ is an irreducible subvariety which contains $\overline{Y}$ and $g$. The constructible set $\phi_g(C_g)\cdot Y$ contains a subset $U_g$ which is open and dense in its closure $\overline{\phi_g(C_g)\cdot Y}$. Thus $\dim U_g \leq d$, and so $\overline{U_g} = \overline{\phi_g(C_g)\cdot Y} = \overline{Y}$. Hence, $G \subset \overline{Y} = \overline{G}$ as claimed.

As a consequence, $\overline{G}\subset \G$ is an algebraic subgroup. Since $Y\subset G$
is open and dense in $\overline{G}$ it follows that $\overline{G} = Y^{-1} \cdot Y \subset G$,
and so (i) holds.
\end{proof}
\begin{cor}\label{path-connected-subgroup.cor}
A path-connected subgroup of an algebraic group is a closed subgroup.
\end{cor}

\ps
\subsection{The finite dimensional case}\label{finitedim.subsec}
The next result shows that an algebraically generated subgroup of an ind-group $\G$ is an algebraic group if and only if the Lie algebra $L(G)$ is finite dimensional. 
It also proves Theorem~A from Section~\ref{alg-gen-groups:sec} of the introduction.

\begin{thm}\label{main2.thm}
Let $G = \langle G_{i}\mid i\in I \rangle \subset \G$ be an algebraically generated group. Then the following two statements are equivalent.
\be
\item[(i)]
$G$ is an algebraic subgroup of $\G$;
\item[(ii)] The Lie algebra $L(G)$ is finite dimensional.
\ee
In this case $\Lie G = L(G)$. In particular, $L(G)$ is locally finite and algebraic.
\end{thm}

\begin{proof}
(a) Assume that $G$ is an algebraic group. Since $G_i \subset G$ is a closed subgroup for any $i$, we have $\Lie G_i \subset \Lie G$, and so $L(G)\subset \Lie G$ is finite dimensional. 
Moreover, there is a finite sequence $(i_1,\ldots,i_m)$ such that the multiplication map $\phi\colon G_{i_1}\times\cdot\times G_{i_m} \to G$ is surjective. Then Lemma~\ref{crucial.lem1} applied to $g_1 = g_2 = \cdots = g_m = e$ shows that $\Lie G \subset \sum_i \Lie G_i$, and so $L(G) = \Lie G$.
\par\smallskip
(b) Now assume that $L(G)$ is finite dimensional, and consider the multiplication map $\phi\colon G_{i_1}\times\cdot\times G_{i_m} \to \G$. There is an open dense subset $U \subset G_{i_1}\times\cdot\times G_{i_m}$ such that $\phi(U) \subset \overline{G_{i_1}\cdots G_{i_m}}\subset \G$ is open and contained in 
$(\overline{G_{i_1}\cdots G_{i_m}})_\reg$, and that $\phi|_U \colon U \to \phi(U)$ is smooth. It follows from Lemma~\ref{crucial.lem1} that $T_g \phi(U) \subset L(G)$ for all $g \in \phi(U)$ and so $\dim \overline{G_{i_1}\cdots G_{i_m}} \leq \dim L(G)$. Hence there is a sequence $(i_1,\ldots,i_m)$ such that $\dim\overline{G_{i_1}\cdots G_{i_m}}$ is maximal which implies that all $G_i$ are contained in $\overline{G_{i_1}\cdots G_{i_m}}$, and so $G$ is an algebraic subgroup.
\end{proof}
\ps
\section{Integration of vector fields}\label{integration.sec}
\subsection{Integration of locally finite Lie algebras}\label{integration.subsec}
The next result is our Theorem~E from the introduction, see Section~\ref{integrationLA.subsec}.
\begin{thm}\label{CD.thm}
Let $L\subset \VF(X)$ be a Lie subalgebra. 
\be
\item
If $L$ is finite dimensional and generated by locally nilpotent vector fields, then $L$ is algebraic, i.e. there is a connected algebraic subgroup $G \subset \Aut(X)$ such that $L = \Lie G$.
\item
If $L$ is locally finite, then there exists a unique minimal connected algebraic subgroup 
$G \subset \Aut(X)$ such that $L \subset \Lie G$.
\ee
\end{thm}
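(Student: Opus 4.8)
The plan is to treat the two assertions in a coordinated way, deducing~(2) from a slightly expanded version of~(1) plus the toral machinery of Section~\ref{toral.subsec}. For part~(1), let $L$ be finite dimensional and generated by locally nilpotent vector fields $\eta_1,\dots,\eta_r$. Each $\eta_j$ integrates to a one-parameter unipotent subgroup $U_j\simeq\Ga\subset\Aut(X)$ with $\Lie U_j=\kk\eta_j$ (see the Remark after the Corollary in Section~\ref{lofi-VF.subsec}, resp.\ \cite[Proposition~7.6.1]{FuKr2018On-the-geometry-of}). Let $G:=\langle U_1,\dots,U_r\rangle$ be the algebraically generated subgroup they generate. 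By construction the Lie algebra generated by the $\Lie U_j$ is exactly $L$, which is finite dimensional by hypothesis, so Theorem~\ref{main2.thm} applies and gives that $G$ is an algebraic subgroup with $\Lie G=L$. Since $G$ is generated by connected subgroups it is connected, and this proves~(1).

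For part~(2) the new ingredient is that $L$ is only assumed locally finite, so it need not be generated by its locally nilpotent elements, and it may fail to be algebraic on the nose — one only wants $L\subset\Lie G$. The approach is to split off a "nilpotent part'' and a "toral part''. First, using Lemma~\ref{locallyfinite.lem} note that $L$ is automatically finite dimensional. Let $\nn\subset L$ be the Lie subalgebra generated by all locally nilpotent elements of $L$; I would like to argue that $\nn$ is an ideal of $L$ — this should follow because the $L$-action on $\VF(X)$ by $\ad$ preserves the property of being locally nilpotent once everything sits inside a common finite dimensional $L$-invariant (in fact $\overline{G}$-invariant) subspace, using that $\ad$ of a locally finite field preserves Jordan type (Lemma~\ref{adjoint.lem}) together with the fact that any Lie algebra homomorphism respects the abstract Jordan decomposition on a finite dimensional module. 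By part~(1), $\nn=\Lie N$ for a connected unipotent algebraic group $N$. Then choose a maximal toral subalgebra (a maximal space of commuting semisimple elements) $S\subset L$; by Lemma~\ref{toral.lem} there is a smallest torus $T\subset\Aut(X)$ with $\Lie T\supset S$, and moreover every $S$-invariant — hence every $L$-invariant — subspace of $\VF(X)$ is $T$-invariant. The candidate algebraic group is $G:=\langle N,T\rangle$: it is algebraically generated with $\Lie$-closure containing both $\nn$ and $S$.

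To finish I would show $L\subset\Lie G$. Write $L=\nn\oplus\mathfrak{t}$ with $\mathfrak{t}$ a complement to $\nn$; by Jordan decomposition each element of $L$ has its semisimple part in some toral subalgebra conjugate into $S$ under $\overline{G}$ (using Lemma~\ref{toral.lem}(b) and that $L$ itself is $\overline{G}$-invariant in the relevant ambient space), and its nilpotent part in $\nn$. The Lie algebra $\tilde L$ generated by $\Lie N$ and $\Lie T$ is finite dimensional — it is contained in the finite dimensional space spanned by the $T$-weight components of $\Lie N$ together with $\Lie T$, since $T$ normalizes $N$ (as $\nn$ is $T$-invariant, hence $N$ is $T$-invariant because $N$ is generated by the unipotent one-parameter subgroups tangent to $\nn$, which $T$ permutes) — so Theorem~\ref{main2.thm} gives that $G$ is algebraic with $\Lie G=\tilde L$. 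One checks $L\subset\tilde L$: the semisimple parts land in $S\subset\Lie T$, the nilpotent parts in $\nn=\Lie N$, and sums of the two lie in $\tilde L$. Hence $L\subset\Lie G$ with $G$ connected algebraic, as desired.

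\medskip
\noindent\textbf{Main obstacle.} The delicate point is justifying that $\nn$ — the subalgebra generated by the locally nilpotent elements of $L$ — is an \emph{ideal} of $L$, and dually that the semisimple part of every element of $L$ can be conjugated into the fixed maximal toral $S$ by an element of $\overline{G}$. Both amount to transporting the classical structure theory of linear algebraic Lie algebras (the radical/nilradical picture, conjugacy of maximal tori) across the embedding $L\hookrightarrow\VF(X)$, and the subtlety is that $L$ is a priori only locally finite in $\OOO(X)$, not acting on a single finite dimensional space. The fix is to realize $L$ faithfully inside $\gl(V)$ for a well-chosen finite dimensional $L$-invariant generating subspace $V\subset\OOO(X)$ (Lemma~\ref{locallyfiniteVF.lem}), at which point everything becomes a statement about the linear algebraic Lie algebra $L|_V\subset\gl(V)$, and the classical results apply. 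Verifying that the passage $V\leadsto V^{(n)}$ does not disturb any of this is the routine-but-essential bookkeeping I would relegate to a lemma.
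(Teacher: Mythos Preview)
Your argument for part~(1) is correct and coincides with the paper's proof.

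For part~(2) there is a genuine gap. Your construction takes $\nn$ to be the subalgebra of $L$ generated by its locally nilpotent elements and $S\subset L$ a maximal toral subalgebra, then sets $G=\langle N,T\rangle$. But nothing guarantees that $L$ contains \emph{any} nonzero locally nilpotent or semisimple elements: the \name{Jordan} components $\delta_s,\delta_n$ of an element $\delta\in L$ need not lie in $L$. Concretely, take $X=\Atwo$ and $L=\kk\delta$ with $\delta=x\,\partial_x+\partial_y$. Then $\delta$ is locally finite with $\delta_s=x\,\partial_x$ and $\delta_n=\partial_y$, neither of which lies in $L$. Hence $\nn=0$ and $S=0$, your group $G$ is trivial, and $L\not\subset\Lie G$. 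Your ``main obstacle'' paragraph gestures at embedding $L\hookrightarrow\gl(V)$ and invoking classical structure theory, but the classical statements describe the algebraic hull of $L$ in $\GL(V)$, not $L$ itself; they do not force the \name{Jordan} parts back into $L$.

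The paper's route avoids this by first applying the \name{Levi} decomposition $L=L_0\ltimes R$ and handling the semisimple part $L_0$ via part~(1) (it is generated by root vectors, which are locally nilpotent). For the solvable radical $R$ the key step---the one missing from your sketch---is to pass to the \emph{$J$-saturation} $\tilde R\supset R$ (Lemma~\ref{J-saturation.lem}): the smallest locally finite solvable Lie subalgebra containing $R$ and closed under taking \name{Jordan} parts. Only after this enlargement does one get $\tilde R=S\ltimes\tilde R_n$ (Lemma~\ref{solvable.lem}) with $S$ toral and $\tilde R_n$ consisting of locally nilpotent elements, and then integrates $\tilde R_n$ to a unipotent $U$ and $S$ into a torus $T$. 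In the example above, $\tilde R=\kk\delta_s\oplus\kk\delta_n$ is two dimensional and the construction yields $G\simeq\Gm\times\Ga$ with $L\subset\Lie G$.
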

\begin{proof}[\protect Proof of (1)]
Assume that $L$ is finite dimensional and generated by locally nilpotent vector fields $\eta_1,\eta_2, \ldots,\eta_m$. 
By \cite[Proposition~7.6.1]{FuKr2018On-the-geometry-of} there are (uniquely determined) algebraic subgroups $U_i \subset \Aut(X)$, 
$U_i \simeq \Ga$, such that $\Lie U_i = \kk \eta_i$. Now Theorem~\ref{main2.thm}  implies that $G:=\langle U_i \mid i=1,\ldots,m \rangle \subset \Aut(X)$ 
is an algebraic group with $\Lie G = L$.
\end{proof}

\begin{exa}\label{semisimple.exa}
Assume that $L \subset \VF(X)$ is a locally finite semisimple Lie subalgebra. Then there is a semisimple group $G \subset \Aut(X)$ such that $L = \xi(\Lie G)$. In particular, $L$ is algebraic.

In fact, the semisimple Lie algebra $L$ is generated by the root subspaces with respect to some Cartan subalgebra 
(see \cite[Proposition 8.4(f)]{Hu1972Introduction-to-Li}), hence by nilpotent elements. Lemma~\ref{locallyfiniteVF.lem}(3) shows that these elements are locally nilpotent in $\VF(X)$, and thus $L$ is generated by locally nilpotent vector fields.
\end{exa}
The proof of (2) needs some preparation. It will be given at the end of section~\ref{saturated.sec}. 
Let  $L= L_0 \ltimes R$ be the Levi decomposition  where $L_0$ is semisimple and $R:=\rad L$ is the solvable radical (see \cite[10.1.6, pp. 305--306]{Pr2007Lie-groups}. 
The example above shows that $L_0 = \Lie G_0$ for a semisimple subgroup $G_0\subset \Aut(X)$.
It remains to find a solvable subgroup $Q \subset \Aut(X)$ normalized by $G_0$ such that $\Lie Q \supset R$. Then the product $G:= G_0 Q$ has the required property.
\begin{lem}\label{solvable.lem}
Let $M \subset \VF(X)$ be a locally finite solvable Lie subalgebra. Assume that $M$ contains with every element $\mu$ 
its semisimple and locally nilpotent parts $\mu_s$ and $\mu_n$. 
Then the locally nilpotent elements from $M$ form a Lie subalgebra $M_n$, and $M = S \ltimes M_n$ where $S$ is any maximal toral subalgebra of $M$ (see Definition~\ref{toral.def}).
\end{lem}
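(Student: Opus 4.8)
The plan is to linearize everything by choosing, as in Lemma~\ref{locallyfiniteVF.lem}, a finite dimensional subspace $V \subset \OOO(X)$ that generates $\OOO(X)$ and is invariant under $M$ — such a $V$ exists because $M$ is locally finite and finite dimensional by Lemma~\ref{locallyfinite.lem}, so I can enlarge any $M$-invariant subspace through finitely many generators. Restriction to $V$ gives a faithful Lie algebra homomorphism $M \hookrightarrow \gl(V)$, since a vector field is determined by its values on a generating set (Lemma~\ref{locallyfiniteVF.lem}); moreover, by that lemma, an element $\mu \in M$ is semisimple (resp.\ locally nilpotent) as a vector field if and only if $\mu|_V$ is semisimple (resp.\ nilpotent) in $\gl(V)$, and Jordan decomposition in $\VF(X)$ matches Jordan decomposition in $\gl(V)$. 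So the whole statement transports to the purely linear-algebraic assertion about the solvable subalgebra $\mathfrak m := M|_V \subset \gl(V)$: the nilpotent elements of $\mathfrak m$ form an ideal $\mathfrak m_n$, and $\mathfrak m = \mathfrak s \ltimes \mathfrak m_n$ for a maximal toral subalgebra $\mathfrak s$.

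For the linear statement I would proceed as follows. Since $\mathfrak m$ is solvable, \name{Lie}'s theorem puts $\mathfrak m$ in upper-triangular form with respect to a suitable basis of $V$ (after possibly decomposing $V$ into the generalized weight spaces common to $\mathfrak m$ — but upper-triangularizing directly is cleanest). Then the nilpotent elements of $\mathfrak m$ are exactly those with zero diagonal part, i.e.\ $\mathfrak m_n = \mathfrak m \cap \mathfrak n$ where $\mathfrak n$ is the strictly-upper-triangular nilradical; this is a Lie ideal of $\mathfrak m$ because $[\mathfrak m, \mathfrak m] \subset \mathfrak n$ (solvability) and because the diagonal map $\mathfrak m \to \mathfrak d$ (projection onto the diagonal torus) is a Lie algebra homomorphism, whose kernel is $\mathfrak m_n$. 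Here I use the hypothesis that $\mathfrak m$ is stable under Jordan decomposition: the diagonal part $\mu|_V^{\,\mathrm{diag}}$ is the semisimple part $\mu_s|_V$, which lies in $\mathfrak m$ by assumption, and $\mu - \mu_s$ is the nilpotent part, which lies in $\mathfrak m_n$; hence $\mathfrak m = \mathfrak m_s + \mathfrak m_n$ as vector spaces where $\mathfrak m_s$ is the image of the semisimple-part map. Now $\mathfrak m_s$ consists of pairwise commuting semisimple elements — commuting because any two semisimple parts $\mu_s, \nu_s$ are simultaneously diagonalizable (they all live in the diagonal torus $\mathfrak d$ after triangularization) — so $\mathfrak m_s$ is a toral subalgebra, in fact a maximal one since adding any further semisimple element of $\mathfrak m$ would, by the same diagonalization, already be in $\mathfrak m_s$. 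Set $\mathfrak s := \mathfrak m_s$; then $\mathfrak m = \mathfrak s \oplus \mathfrak m_n$ with $\mathfrak m_n$ an ideal and $\mathfrak s$ a subalgebra normalizing it, i.e.\ $\mathfrak m = \mathfrak s \ltimes \mathfrak m_n$.

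Translating back: $M_n := \{\mu \in M \mid \mu \text{ locally nilpotent}\}$ corresponds to $\mathfrak m_n$ and is therefore a Lie subalgebra (indeed an ideal) of $M$, and $S$, the image of the semisimple-part map $\mu \mapsto \mu_s$, is a maximal toral subalgebra with $M = S \ltimes M_n$. I would double-check two small points: first, that $\mathfrak s$ so constructed is genuinely \emph{maximal} toral in $\mathfrak m$ and not merely maximal among those arising as images of the Jordan projection — but since every semisimple element of $\mathfrak m$ equals its own semisimple part, it does lie in the image, so there is no gap; second, that the choice of $V$ does not matter — it does not, because $M_n$ and the semisimple-part operation are intrinsic to $M \subset \VF(X)$. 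The main obstacle, such as it is, is bookkeeping: making sure the Jordan decomposition in $\gl(V)$, in $\Ll(\OOO(X))$, and in $\VF(X)$ are all compatible, which is precisely what Lemma~\ref{locallyfiniteVF.lem}(4)--(5) and Lemma~\ref{adjoint.lem} provide, so the argument is essentially assembling already-established pieces rather than proving anything genuinely new.
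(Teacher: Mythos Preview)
Your linearization and use of \name{Lie}'s theorem are fine, and match the paper. The gap is in the sentence ``the diagonal part $\mu|_V^{\,\mathrm{diag}}$ is the semisimple part $\mu_s|_V$''. This is false: an upper-triangular matrix can be semisimple without being diagonal, e.g.\ $\left[\begin{smallmatrix}1&1\\0&2\end{smallmatrix}\right]$. Consequently the set $\mathfrak m_s := \{\mu_s \mid \mu \in \mathfrak m\}$ of semisimple parts need not lie in the diagonal, need not consist of pairwise commuting elements, and need not even be a linear subspace. Already for $\mathfrak m = \bb_2$ (which is $J$-saturated and solvable) the semisimple elements are the matrices with distinct diagonal entries together with the scalars; this is not a subspace, and e.g.\ $\left[\begin{smallmatrix}1&1\\0&2\end{smallmatrix}\right]$ and $\left[\begin{smallmatrix}1&0\\0&2\end{smallmatrix}\right]$ do not commute. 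So your construction of $\mathfrak s$ as the image of the semisimple-part map does not produce a toral subalgebra, and the decomposition $\mathfrak m = \mathfrak s \oplus \mathfrak m_n$ does not follow.

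The paper repairs exactly this point by reversing the logic: rather than defining $S$ as the set of all semisimple parts, one \emph{chooses} a maximal toral subalgebra $S \subset M$ at the outset and takes the weight decomposition $M = \bigoplus_\alpha M_\alpha$ under $\ad S$. The weight spaces $M_\alpha$ with $\alpha \neq 0$ consist of nilpotent elements (because $(\gl_d)_\alpha \cdot (\gl_d)_\beta \subset (\gl_d)_{\alpha+\beta}$ forces high powers to vanish). For $\mu \in M_0 = \Cent_M(S)$, the $J$-saturation hypothesis gives $\mu_s \in M$, and since $\mu_s$ commutes with $S$ the span $S + \kk\mu_s$ is toral; maximality of $S$ then forces $\mu_s \in S$, whence $M_0 = S \oplus (M_0 \cap \nn_d)$. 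The maximality of $S$ is what replaces your (incorrect) claim that all semisimple parts are automatically simultaneously diagonal.
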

\begin{proof}
Since $M$ is locally finite we may assume that $M \subset \gl(V)$ where $V \subset \OOO(X)$ is a finite dimensional  subspace generating $\OOO(X)$.
Since $M$ is solvable there is a basis of $V$ such that $M$ is contained in the upper triangular matrices 
$\bb_d \subset \gl_d$ where $d:=\dim V$ (Lie's Theorem). Clearly, the nilpotent elements from $M$ belong to the subalgebra 
$\nn_d \subset \bb_d$ of upper triangular matrices with zeros along the diagonal, and so $M_n:=M \cap \nn_d$ is the ideal of $M$ consisting of the nilpotent elements.
 By  Lemma~\ref{locallyfiniteVF.lem} every nilpotent element of $M$ is also locally nilpotent viewed as a vector field on $X$, and vice versa. If $M=M_n$ then $S=0$ 
 and $M=S\ltimes M_n$, as desired. 
 
In the general case,
choose a maximal toral subalgebra $S \subset M$, and consider the corresponding weight space decomposition 
$M = \bigoplus_{\alpha\in S^*} M_\alpha$ where $S^*$ is the group of characters of $S$ and 
$$
M_\alpha := \{\mu \in M \mid \ad \tau (\mu)=[\tau , \mu] = \alpha(\tau) \cdot \mu \text{ for }\tau \in S\}.
$$
Similarly, we get a decomposition $\bb_d = \bigoplus_{\alpha\in S^*} (\bb_d)_\alpha$. 
Using the identity $[s,a \cdot b]=[s,a]\cdot b + a \cdot [s,b]$ we get
$(\bb_d)_\alpha \cdot (\bb_d)_\beta \subset (\bb_d)_{\alpha + \beta}$ which implies that $M_\alpha$ consists of nilpotent elements for $\alpha\neq 0$. 
Hence, $\bigoplus_{\alpha\neq 0} M_\alpha \subset \nn_d$,
while $M_0=\cent_M(S)\supset S$.

Let $\mu \in M_0$ have Jordan decomposition $\mu = \mu_s + \mu_n$. Since $S$ commutes with $\mu$ 
it also commutes with $\mu_s$ and $\mu_n$, and so $\mu_s \in M_0$ and $\mu_n \in M_0\cap \nn_d$.  Since $S$ is a maximal toral subalgebra, we have $\mu_s\in S$, and so
$M_0 = S \oplus (M_0\cap\nn_d)$. As a consequence we see that  $M_n=(M_0\cap \nn_d)\oplus \bigoplus_{\alpha\neq 0} M_\alpha \subseteq \nn_d$ and $M = S \ltimes M_n$.
\end{proof}

From the first part of the proof we get the following result.
\begin{cor}\label{cor:lnd} 
If a locally finite solvable Lie subalgebra $M\subset \VF(X)$ is generated by locally nilpotent elements, then it consists of locally nilpotent elements.
\end{cor}
\ps
\subsection{Jordan-saturated subspaces}\label{saturated.sec}
The assumption in Lemma~\ref{solvable.lem} above leads to the following definition.
A locally finite subspace $W \subset \VF(X)$ is called {\it Jordan-saturated\/} (shortly {\it $J$-saturated}) if it contains 
with every element $\eta$ the semisimple and the locally nilpotent parts $\eta_s$ and $\eta_n$. 
Clearly, for any algebraic subgroup $G \subset \Aut(X)$  the Lie algebra $\Lie G\subset \VF(X)$ is $J$-saturated.
\begin{lem}\label{J-saturation.lem}
Let $R \subset \VF(X)$ be a locally finite and solvable Lie subalgebra. Then the smallest $J$-saturated Lie subalgebra $\tilde R \subset \VF(X)$ 
containing $R$ has the following properties:
\be
\item 
$\tilde R$ is locally finite and solvable.
\item
If a subspace $V \subset \OOO(X)$ is $R$-invariant, then it is invariant under $\tilde R$.
\item
If an automorphisms $\phi\in \Aut(X)$ normalizes $R$, then it normalizes $\tilde R$.
\ee
\end{lem}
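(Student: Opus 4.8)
The plan is to construct $\tilde R$ explicitly as the Lie subalgebra generated by all the Jordan parts of all elements of $R$, and then to check that this generation process terminates inside a finite-dimensional space, so that it is locally finite, and that it inherits solvability and the invariance/normalization properties directly from $R$. The key observation making everything finite is Lemma~\ref{adjoint.lem}: a locally finite vector field $\delta$ has $\ad\delta$ locally finite on $\VF(X)$ with $\ad\delta = \ad\delta_s + \ad\delta_n$ its Jordan decomposition, so the $\ZZ$-span of the $\ad$-orbit of any vector field is finite dimensional. Concretely, I would first reduce to a finite-dimensional model: since $R$ is locally finite, pick a finite-dimensional $R$-invariant subspace $V \subset \OOO(X)$ that generates $\OOO(X)$ (possible because $\OOO(X)$ is finitely generated and $R$ is locally finite — enlarge a generating set to an $R$-invariant one using local finiteness). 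Then the restriction map $R \to \gl(V)$ is injective by Lemma~\ref{locallyfiniteVF.lem}, and by part (4)--(5) of that lemma the Jordan parts $\eta_s,\eta_n$ of any $\eta\in R$ (as vector fields) restrict to the Jordan parts of $\eta|_V$ in $\gl(V)$, and in particular $V$ is again $\eta_s$- and $\eta_n$-invariant.

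Next I would define $\tilde R$ to be the Lie subalgebra of $\VF(X)$ generated by $\{\eta_s,\eta_n \mid \eta\in R\}$; equivalently, by $R$ together with all these Jordan parts (note $\eta = \eta_s + \eta_n \in \tilde R$, so $R \subset \tilde R$). For local finiteness and finite dimensionality: all these generators lie in the finite-dimensional space $R' := \{\delta\in\VF(X) \mid \delta(V)\subset V\}\cong\gl(V)$ — indeed each $\eta_s,\eta_n$ stabilizes $V$ by the previous paragraph — and $R'$ is a Lie subalgebra of $\VF(X)$, every element of which is locally finite by Lemma~\ref{locallyfiniteVF.lem}(1). Hence $\tilde R \subset R'$ is finite dimensional, and being a finite-dimensional subspace of $\VF(X)$ it is automatically locally finite — no, more carefully: $\tilde R$ restricts injectively into $\gl(V)$, hence is finite dimensional, and a finite-dimensional Lie subalgebra of $\VF(X)$ that lies in $R'$ acts on $V$, so $V$ is a finite-dimensional $\tilde R$-invariant generating subspace, whence $\tilde R$ is locally finite (apply Lemma~\ref{locallyfiniteVF.lem}(1) to each element, or observe the $\tilde R$-module generated by any $f$ is spanned by products of elements of $V$). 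This also shows $\tilde R$ is $J$-saturated: the Jordan parts of any $\eta\in\tilde R$ can be computed in $\gl(V)$ and lie in $R'$; but I must check they lie in $\tilde R$ itself, which follows since $\tilde R$ is a finite-dimensional algebraic-looking subalgebra — here I would invoke that the Jordan decomposition in $\gl(V)$ of an element $\eta$ of a Lie subalgebra $\mathfrak{a}\subset\gl(V)$ whose elements are all locally finite need not lie in $\mathfrak{a}$ in general, so instead I define $\tilde R$ by transfinite closure: set $R_0 := R$, and $R_{k+1} :=$ the Lie subalgebra generated by $R_k$ and all Jordan parts of elements of $R_k$; all $R_k \subset R'$, so the chain stabilizes at some finite stage $\tilde R := R_N$, which is then $J$-saturated by construction and minimal among $J$-saturated subalgebras containing $R$.

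For solvability (part (1)), I would argue that each step preserves solvability: if $R_k$ is solvable, it lies (after choosing a suitable basis of $V$, by Lie's theorem) in the upper-triangular matrices $\bb_d\subset\gl_d$, and the Jordan parts of an upper-triangular matrix are again upper-triangular (diagonal part and strictly-upper part), so $R_{k+1}\subset\bb_d$ is still solvable. Hence $\tilde R$ is solvable; this is essentially the same computation used in the proof of Lemma~\ref{solvable.lem}. For part (2): if $V_0\subset\OOO(X)$ is $R$-invariant, then enlarging our chosen generating $V$ if necessary we may assume $V_0\subset V$, and then for $\eta\in R$ the operator $\eta|_{V}\in\gl(V)$ preserves $V_0$, so its Jordan parts $\eta_s|_V, \eta_n|_V$ preserve $V_0$ too (Jordan parts are polynomials in the operator, hence preserve every invariant subspace) — iterate to get all of $\tilde R$ preserving $V_0$. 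For part (3): if $\phi\in\Aut(X)$ normalizes $R$, then $\phi$ acts on $\VF(X)$ by a Lie algebra automorphism, and conjugation by $\phi$ intertwines the Jordan decomposition, i.e. $(\phi\cdot\eta)_s = \phi\cdot\eta_s$ and similarly for the nilpotent part (since $\phi\cdot(-)$ commutes with taking powers and preserves semisimplicity and local nilpotence); hence $\phi$ sends the generating set $\{\eta_s,\eta_n\}$ of $R_{k+1}$ into $R_{k+1}$ at each stage, so it normalizes $\tilde R$. The main obstacle I anticipate is pinning down cleanly the "finite-dimensional model" and the interaction of Jordan decomposition with subspace enlargement — specifically making sure that one fixed finite-dimensional generating $V$ can be chosen so that it (a) is $R$-invariant, (b) contains any given $R$-invariant $V_0$, and (c) remains stable under all Jordan parts encountered — but local finiteness of $R$ together with Lemma~\ref{locallyfiniteVF.lem} handles each of these, and the chain $R_k\subset R'$ ensures the whole construction is confined to $\gl(V)$, so everything is ultimately a finite-dimensional linear-algebra argument.
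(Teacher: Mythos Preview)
Your proposal is correct and follows essentially the same route as the paper: choose a finite-dimensional $R$-invariant generating subspace $V\subset\OOO(X)$, iterate the operation ``adjoin Jordan parts and close under the bracket'', and observe that everything stays inside a fixed finite-dimensional solvable container so the process terminates; parts (2) and (3) are handled by the fact that Jordan parts are locally polynomials in the operator and that any automorphism commutes with the Jordan decomposition. The only cosmetic differences are that the paper adjoins one $\eta_n$ at a time and uses the upper-triangular algebra $\bb_d\subset\gl(V)$ (obtained from Lie's theorem) as its finite-dimensional container, whereas you adjoin all Jordan parts at each step and use $R'=\{\delta\mid\delta(V)\subset V\}$; your explicit $R'$ makes the termination argument slightly cleaner. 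One small remark: in your argument for (2) the clause ``enlarging $V$ so that $V_0\subset V$'' is unnecessary (and would fail for infinite-dimensional $V_0$); your parenthetical ``Jordan parts are polynomials in the operator'' already does the job, since for any $v\in V_0$ the cyclic $\eta$-subspace through $v$ is finite-dimensional and contained in $V_0$.
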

Notice that any algebraic Lie subalgebra of $\VF(X)$ which contains $R$  also contains $\tilde R$. 
\begin{proof}
As before, we fix a finite dimensional $R$-invariant subspace $V \subset \OOO(X)$ which generates $\OOO(X)$. 
Choosing a suitable basis of $V$ we may assume that the image of $R$ in $\gl(V)=\gl_d$ is contained in the upper triangular matrices $\bb_d$. 

If $\eta = \eta_s + \eta_n$ is the Jordan-decomposition of an element $\eta \in R$, then every finite dimensional $R$-invariant subspace 
$V \subset \OOO(X)$ is invariant under $\eta_s$ and $\eta_n$, hence invariant under the Lie subalgebra $R_1$ generated by $R+\kk \eta_n$. Thus $R_1$ is locally finite. 
The Jordan-decomposition of $\eta$ carries over to the image of $\eta$ in $\gl(V) = \gl_d$. Hence the images of $\eta_s$ and $\eta_n$ also belong to $\bb_d$ 
which shows that the image of $R_1$ is contained in $\bb_d$. 

This procedure of adding the locally nilpotent part and forming the Lie algebra has to end with a locally finite and solvable Lie subalgebra 
$\tilde R$ containing $R$ with the properties (1) and (2).

Finally, assume  that $\phi\in\Aut(X)$ stabilizes $R$, and let $\eta\in R$. Then $\phi(\eta) = \phi(\eta_s) + \phi(\eta_n)$ is the Jordan 
decomposition of $\phi(\eta)\in R$, hence $\phi(\eta_s)$ and $\phi(\eta_n)$ belong to $\tilde R$. 
Now it follows from the construction that $\tilde R$ is invariant under $\phi$.
\end{proof}
\begin{proof}[\protect Proof of Theorem~\ref{CD.thm}(2)]
Let $L = L_0 \ltimes R$ be the Levi decomposition where $L_0$ is semisimple and $R$ is the solvable radical of $L$. 
By part (1) and Example~\ref{semisimple.exa} there is a semisimple group $G_0 \subset \Aut(X)$ with $\Lie G_0 = L_0$. 
Clearly, $R$ is $G_0$-invariant, as well as $\tilde R$, the $J$-saturation of $R$, see Lemma~\ref{J-saturation.lem}. 
By Lemma~\ref{solvable.lem} we have $\tilde R = S \ltimes \tilde R_n$ where $S$ is toral and $\tilde R_n$ is solvable and consists of locally nilpotent elements. 
Thus, again by part (1), there is a unipotent group $U \subset \Aut(X)$ such that $\Lie U = \tilde R_n$. 

It follows from Lemma~\ref{toral.lem} that there is a well-defined minimal torus $T \subset \Aut(X)$ such that $\Lie T \supset S$, 
and that $T$ normalizes $R_n$. By Lemmas~\ref{toral.lem} and \ref{J-saturation.lem}, $T$ normalizes $U$ and $\tilde R_n$,  
and so the product $Q:=T U = UT \subset \Aut(X)$ is a connected solvable subgroup with $\Lie Q \supset \tilde R$. 
By construction, $Q$ is the smallest subgroup with this property. 
It remains to show that $G_0$ normalizes $Q$. Indeed,  $gQg^{-1}$ is a closed subgroup for any $g \in G_0$,  
and $\Lie gQg^{-1} \supset g\tilde Rg^{-1} = \tilde R$.

The uniqueness statement follows from the fact that $\Lie (G \cap G') = \Lie G \cap \Lie G'$ for any algebraic subgroups 
$G, G' \subset \Aut(X)$, see \cite[Proposition~7.7.1]{FuKr2018On-the-geometry-of}.

\end{proof}
\ps
\section{Nested ind-groups and nested subgroups}
\subsection{Nested groups and nested Lie algebras} 
For the following concept see \cite[Section 9.4]{FuKr2018On-the-geometry-of} and \cite{KoPeZa2017On-automorphism-gr}.

\begin{defn}\label{nested.def1}
An ind-group $\G$ is called {\it nested} if it admits an admissible filtration consisting of algebraic subgroups.
A Lie algebra $L$ is called {\it nested\/} if there exists a sequence 
$L_1 \subset L_2 \subset \cdots \subset L$ of finite dimensional Lie algebras $L_i$ such that $L  = \bigcup_i L_i$.\end{defn}
\begin{lem}\label{nested.lem1}
\be
\item
The Lie algebra $\Lie\G$ of a nested ind-group $\G$ is nested.
\item 
Every filtration of a nested ind-group by algebraic groups is admissible.
\item\label{item3}
A connected nested ind-group admits a filtration by connected algebraic subgroups. In particular, it is algebraically generated (Definition~\ref{AGgroup.def}).
\item
Assume that the base field $\kk$ is uncountable. If an ind-group $\G$ has a filtration by algebraic subgroups, then $\G$ is nested.
\ee
\end{lem}
\begin{proof}
(1) This is clear.
\ps
(2) Let $\G = \bigcup_k \G_k$ be a nested ind-group, i.e. the $\G_k$ are algebraic subgroups, and let $G_1 \subset G_2 \subset \cdots \subset \G$ be a sequence of algebraic subgroups such that $\G = \bigcup_i G_i$. Then each $G_i$ is contained in some $\G_k$, and $\G_k = \bigcup_i (\G_k \cap G_i)$ is a countable union of closed subgroups. It follows from Lemma~\ref{finite-genertation.lem} below that $\G_k = \G_k \cap G_i$ for some $i$ and so $\G_k \subset G_i$.
\ps
(3) If $\G = \bigcup_k \G_k$ is connected, then $\G = \bigcup_k \G_k^\circ$ by \cite[Proposition~2.2.1]{FuKr2018On-the-geometry-of}.
\ps
(4) If $\kk$ is uncountable, then every filtration by closed algebraic subsets is admissible, see \cite[Theorem~1.3.3]{FuKr2018On-the-geometry-of}.
\end{proof}
The following lemma is well-known.
\begin{lem}\label{finite-genertation.lem}
For any algebraic group $G$ there exists a finite set $\{g_1,g_2,\ldots,g_n\}\subset G$ such that $G = \overline{\langle g_1,g_2,\ldots,g_n\rangle}$. 
\end{lem}
\begin{proof}[Outline of Proof]
The statement is clear for a unipotent group $U$ since $\overline{\langle u \rangle} \simeq \Ga$ for a unipotent element $u\neq e$. It is also clear for a torus $T$ since there is always an element $t \in T$ such that $T = \overline{\langle t \rangle}$. Finally, for a semisimple group $G$ we fix a maximal torus $T$, choose a ``dense'' element $t \in T$ and unipotent elements in every root subgroup of $G$.
\end{proof}
\begin{defn}\label{nested.def2}
A subgroup $G$ of an ind-group $\G$ is called {\it quasi-nested\/} if there exists a sequence  $G_1 \subset G_2 \subset \cdots \subset \G$ of bounded subgroups of $\G$ (see section~\ref{VF.subsec}) such that $G = \bigcup_i G_i$. It is called {\it nested} if the $G_i$ are algebraic subgroups.
\end{defn}
The situation here is subtle. Every nested subgroup $G$ of an ind-group $\G$ has a natural structure of a (nested) ind-group given by the filtration by algebraic subgroups, and thus $\Lie G$ is well defined and is a subalgebra of $\Lie \G$. But in general $G$ is not a closed ind-subgroup as shown by the following example.
\begin{exa}
The subgroup $\mu_2 \subset \Gm$ of elements of order a power of $2$ is nested by the finite subgroups $\simeq \ZZ/2^n\ZZ$. But it is not closed.
\end{exa}

\begin{rems}\label{nested.rems}
\be
\item[(a)]
A quasi-nested subgroup $G = \bigcup_i G_i \subset \G$ defines a nested subgroup, namely $\bigcup_i \overline{G_i}$.
\item[(b)]
Every subgroup of a nested ind-group is quasi-nested.
\item[(c)]
Let $\phi\colon \G \to \H$ be a homomorphism of ind-groups. If $\G$ is nested, then the image $\phi(\G) \subset \H$ is a nested subgroup.
\item[(d)]
If the subgroup $G \subset \G$ is quasi-nested, $G = \bigcup_i G_i$, then one can always assume that the subgroups $G_i \subset G$ are closed in $G$.
\ee
\end{rems}
\begin{prop}
Assume that the base field $\kk$ is uncountable.
Then a path-connected quasi-nested subgroup $G \subset \G$ is nested.
\end{prop}
\begin{proof}
Let $G = \bigcup G_i \subset \G$ where the $G_i$ are closed  in $G$, see Remark~\ref{nested.rems}(d). Define $G_i^\circ \subset G_i$ to be the path-connected component of $e$, i.e. the set of elements of $G_i$ which can be connected to $e \in G_i$. Then $G_i^\circ$ is a path-connected bounded subgroup, hence an algebraic group by Corollary~\ref{path-connected-subgroup.cor}. 

We claim that $G = \bigcup_i G_i^\circ$. For any $g \in G$ there is an irreducible variety $Y$ and a morphism $\phi\colon Y \to \G$ such that $e,g\in \phi(Y) \subset G$. Since the subgroups $G_i \subset G$ are closed, the inverse images $\phi^{-1}(G_i) \subset Y$ are closed, and $\bigcup_i \phi^{-1}(G_i) = Y$. Since $\kk$ is uncountable there is an $i$ such that $Y = \phi^{-1}(G_i)$. Hence $\phi(Y) \subset G_i^\circ$ and so $ g \in\bigcup_i G_i^\circ$.
\end{proof}

\begin{que}\label{Q.4}
Let $G \subset \G$ be a path-connected nested subgroup, e.g. a unipotent nested subgroup. 
Is it true that $G$ is closed? {\rm(Cf. \cite{Pe2023Structure-of-conne}.)}
\end{que}
In order to give a positive answer to this question it suffices to show that for every closed algebraic subset $A \subset \G$ the intersection $A\cap G$ is contained in an algebraic subgroup of $G$. In fact, if $A\cap G \subset G_i$, then $A \cap G = A \cap G_i$ is closed in $G_i$.
\ps
The next result is an immediate consequence of Proposition~\ref{orbit-agg.prop} and Lemma~\ref{nested.lem1}(\ref{item3}).
\begin{cor}\label{orbits-nested.cor}
Let $G = \bigcup_k G_k\subset \Aut(X)$ be a path-connected nested subgroup. Then there is a $k_0$ such that for $k\geq k_0$ the groups $G$ and $G_k$ have the same orbits on $X$.
\end{cor}
\ps
\subsection{Integration of nested Lie algebras}
If $G \subset \Aut(X)$ is a nested subgroup, then $\Lie G \subset \VF(X)$ is a nested Lie algebra filtered by locally finite subalgebras. 
The converse of this is also true.
\begin{prop}
Let $L = \bigcup_i L_i \subset \VF(X)$ be a nested Lie subalgebra. 
Then the following hold.
\be
\item  If $L$ is generated by locally nilpotent vector fields, then
there exists a nested unipotent subgroup 
$G=\bigcup_i G_i\subset\Aut(X)$ such that $L=\bigcup_i\Lie G_i$.
\item  Assume that all $L_i$ are locally finite. 
Then there exists a smallest nested subgroup $G \subset \Aut(X)$ such that $L \subset \Lie G \subset \VF(X)$. 
Moreover, a closed ind-subgroup $\G \subset \Aut(X)$ contains $G$ if and only if  $L \subset \Lie\G$.
\ee
\end{prop}
\begin{proof} 
(1) Any finite collection $\SSS\subset L$ of locally nilpotent vector fields 
generates a Lie subalgebra, say, $M_\SSS$ of some $L_j$. According to Theorem~\ref{CD.thm}(1), 
$M_\SSS=\Lie G_\SSS$ for an algebraic subgroup
$G_\SSS$ of $\Aut(X)$. On the other hand, $L_i$ is contained in some $M_i:=M_{\SSS_i}$, hence $L = \bigcup_i M_i$. 
We may suppose that $M_i\subset M_{i+1}$, and so $G_i\subset G_{i+1}$ where $G_i:=G_{\SSS_i}$. The union $G:=\bigcup_i G_i$ is a nested subgroup of $\Aut X$ such that 
$L= \bigcup_i \Lie G_i$.
\ps
(2) Let $G_i \subset \Aut(X)$ be the smallest algebraic subgroup with $\Lie G_i \supset L_i$ (see Theorem~\ref{CD.thm}(2)). 
Then $G_i \subset G_{i+1}$ for all $i$ and so $G:= \bigcup_i G_i$ is a nested subgroup with the required properties.

Now assume that  $\Lie\G \supset L$ and define $G_i' := \G \cap G_i$. Then $\Lie G_i' = \Lie\G \cap \Lie G_i \supset L_i$,  hence $G_i' = G_i$ and the claim follows.
\end{proof}
\ps
\section{Solvability and triangulation}
\subsection{Solvable subgroups}\label{solvable.subsec}
This section is devoted to the proof of our Theorem~B from the introduction.
\begin{thm}\label{thmB-new}
Let $G = \langle H_i\mid i \in I\rangle$ be a solvable algebraically generated group. If the family $I$ is finite, then $G$ is a solvable algebraic group 
with Lie algebra $\Lie G = L(G) =\langle \Lie H_i\mid i \in I\rangle$. In general, $G$ is nested and is of the form $G = U_G \rtimes T$ 
where $T \subset \Aut(X)$ is a torus and $U_G$ is a nested unipotent group consisting of the unipotent elements of $G$.
\end{thm}
\begin{cor}\label{thmB.cor}
A solvable subgroup $G \subset \Aut(X)$ generated by unipotent elements is quasi-nested and consists of unipotent elements.
\end{cor}
\begin{proof}[Proof of Corollary]
If $G$ is generated by the unipotent elements $(u_i)_{i\in I}$, then the closure $\overline{G}\subset \Aut(X)$ contains 
the subgroup $\widetilde G$ generated by the unipotent subgroups $\overline{\langle u_i \rangle}$. 
We will see in the following Lemma~\ref{solvableGr.lem}(2) that $\overline{G}$ is also solvable. 
It then follows from the theorem above that $\widetilde G=U_{\widetilde G} \rtimes T$ is nested, hence $G$ is quasi-nested and is contained in $U_{\widetilde G}$. 
\end{proof}
The proof of Theorem \ref{thmB-new} needs some preparation. 
Given a solvable group $G$ we denote by $G^{(i)}$ the members of the derived series of $G$ and by $d(G)$ the derived length:
$$
G^{(0)} := G \supset G^{(1)} := (G,G) \supset G^{(2)} := (G^{(1)},G^{(1)}) 
\supset \cdots \supset G^{(d)} = \{e\}.
$$
If $G$ is nilpotent, then $G_i$ stands for the members of the lower central series of $G$ and $n(G)$ for its  nilpotency class:
$$
G_0:=G \supset G_1:=(G,G)\supset G_2 := (G,G_1) \supset \cdots \supset G_n=\{e\}.
$$
In the following lemma we collect some important facts about closures of solvable and nilpotent groups. 
Statement (2) can be found in \cite[ Lemma 2.3(3)]{FuPo2018On-the-maximality-}. 
\begin{lem}\label{solvableGr.lem}
Let $G, H \subset \G$ be subgroups of an ind-group $\G$.
\be
\item We have $(\overline{G},\overline{H}) \subset \overline{(G,H)}$, 
$\overline{G}^{(i)}\subset \overline{G^{(i)}}$ and $\overline{G}_i\subset \overline{G_i}$.
\item If $G$ is solvable, then so is $\overline{G}$, and $d(\overline{G})=d(G)$.
\item If $G$ is nilpotent, then so is $\overline{G}$, and $n(\overline{G})=n(G)$.
\ee
\end{lem} 
\begin{proof}
(1) 
Let $\gamma\colon \G \times \G \to \G$ be the commutator map, $(g,h) \mapsto ghg^{-1}h^{-1}$.
Then  $\gamma(\overline{G}\times\overline{H}) \subset \overline{\gamma(G \times H)}$, and so
$(\overline{G},\overline{H}) \subset \langle \overline{\gamma(G \times H)}\rangle \subset \overline{(G,H)}$.
Now the remaining inclusions follow.
\ps
(2)
By (1) we have $G^{(i)} \subset \overline{G}^{(i)} \subset \overline{G^{(i)}}$. Hence $\overline{G}^{(d(G))} = \{e\}$ and 
$\overline{G}^{(i)} \neq \{e\}$ for $i<d(G)$.
\ps
(3) 
By (1) we have $G_i \subset \overline{G}_i \subset \overline{G_i}$. Hence $\overline{G}_{n(G)} = \{e\}$ and 
$\overline{G}_i \neq \{e\}$ for $i<n(G)$.
\end{proof}
\ps
\begin{lem}\label{solvableLA.lem} 
Let $\G$ be an ind-group and $\H$ be a  closed subgroup of $\G$. 
Then the following hold.
\be
\item  $[\Lie \H,\Lie \G]\subset \Lie\overline{(\H,\G)}$.
\item If $\G$ is solvable, then so is $\Lie \G$.
\item If $\G$ is nilpotent, then so is $\Lie \G$. 
\ee
\end{lem}
\begin{proof} (Cf. \cite[Section~7.5]{FuKr2018On-the-geometry-of})
\ps
(1)  For a fixed $h \in \H$ consider the morphism 
\[\gamma_h\colon \G \to (\H,\G),\quad \gamma_h(g):= hgh^{-1}g^{-1}.\] 
The differential $(d\gamma_h)_e \colon \Lie \G \to \Lie \overline{(\H,\G)}$  is given by $(d\gamma_h)_e = \Ad(h) - \id$. 
Now fix $A \in \Lie\G$ and consider the morphism 
\[\alpha_A\colon \H \to \Lie \G,\quad \alpha_A(h) := \Ad(h)A - A=d\gamma_h (A).\] 
 The differential at $e$ of this map is $ -\ad(A)$; it sends $\Lie \H$ onto $[\Lie \H, A]$.
By the preceding, $(d\gamma_h)_e (A)\in\Lie \overline{(\H,\G)}$ for any $h\in\H$ and $A \in \Lie\G$. 
Hence, $[\Lie \H, A]\subset\Lie \overline{(\H,\G)}$ for any $A \in \Lie\G$.
This yields (1). 
\ps
(2) Now assume that $\G$ is solvable. From the derived series for $\G$ we get the normal series
$$
\G^{(0)} = \G \supset \overline{\G^{(1)}}\supset \overline{\G^{(2)}} \supset \cdots \supset \overline{\G^{(n)}} = \{e\}
$$
of closed subgroups with the property that the factor groups are all commutative (see Lemma~\ref{solvableGr.lem}(1)). Passing to the series
\[
\Lie \G \supset \Lie\overline{\G^{(1)}}\supset \Lie\overline{\G^{(2)}} \supset \cdots \supset \Lie\overline{\G^{(n)}} = 0
\]
we get from (1)
$$
\Lie\overline{\G^{(i+1)}} = \Lie\overline{(\G^{(i)},\G^{(i)})} = \Lie\overline{(\overline{\G^{(i)}},\overline{\G^{(i)}})}
\supset [\Lie\overline{\G^{(i)}},\Lie\overline{\G^{(i)}}].
$$
Thus the factors $\Lie\overline{\G^{(i)}}/\Lie\overline{\G^{(i+1)}}$ are all commutative, which implies that $\Lie\G$ is solvable.
\ps
(3) Assume now that $\G$ is nilpotent. From the lower central series for $\G$ we get the series of closed subgroups 
$$
\G_0 = \G \supset \overline{\G_1}\supset \overline{\G_2} \supset \cdots \supset \overline{\G_n} = \{e\}
$$ 
and the corresponding series
\[\tag{$*$}
\Lie \G \supset \Lie\overline{\G_1}\supset \Lie\overline{\G_2} \supset \cdots \supset \Lie\overline{\G_n} = \{0\}.
\]
By virtue of (1) we have
$$
\Lie\overline{\G_{i+1}} = \Lie\overline{(\G,\G_{i})} = \Lie\overline{(\overline{\G},\overline{\G_{i}})}
\supset [\Lie\overline{\G},\Lie\overline{\G_{i}}].
$$
Thus $(*)$ is a central series and so $\Lie\G$ is nilpotent.
\end{proof}
\begin{que} 
Let $\G$ be a path connected ind-group. 
Is it true that $\G$ is solvable (resp. nilpotent) if $\Lie\G$ is? 
\end{que}

\begin{rem}\label{solvable.rem} 
It is known that a path-connected ind-group $\G$ 
is commutative if $\Lie\G$ is, see \cite[Corollary 7.5.3]{FuKr2018On-the-geometry-of}. 
\end{rem}

\begin{lem}\label{finitelygeneratedLA.lem}
Let $L$ be a finitely generated Lie algebra. If $L$ is solvable, then $L$ is finite dimensional.
\end{lem}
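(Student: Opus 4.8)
The plan is to argue by induction on the derived length $d$ of $L$, peeling off the last non-zero term of the derived series at each step and controlling it as a module over a finite dimensional quotient. If $d\le 1$ then $L$ is abelian, and a Lie algebra generated by finitely many elements whose pairwise brackets vanish is the linear span of those generators, hence finite dimensional. For $d\ge 2$ set $A:=L^{(d-1)}$; this is an abelian ideal of $L$, and $\overline L:=L/A$ is solvable of derived length $\le d-1$ and is generated by the images of the generators of $L$, so the inductive hypothesis applies to $\overline L$ and gives $\dim\overline L<\infty$. As $\dim(L/A)<\infty$, everything comes down to proving that $A$ itself is finite dimensional.

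To this end, choose $z_1,\dots,z_r\in L^{(d-2)}$ whose classes form a basis of the finite dimensional space $L^{(d-2)}/A$, so that $L^{(d-2)}=\kk z_1+\dots+\kk z_r+A$. Expanding $A=L^{(d-1)}=[L^{(d-2)},L^{(d-2)}]$ bilinearly and using $[A,A]=0$ yields the identity $A=W+\sum_{a=1}^{r}\ad(z_a)(A)$, where $W:=\Span_\kk\{[z_a,z_b]\mid 1\le a,b\le r\}\subseteq A$ is finite dimensional. Since each $[z_a,z_b]$ lies in $L^{(d-1)}=A$ and $A$ is abelian, the operators $\tau_a:=\ad(z_a)|_A\in\End(A)$ commute pairwise. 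Granting, for the moment, that every $\tau_a$ is a nilpotent operator, the displayed identity iterates to $A=\kk[\tau_1,\dots,\tau_r]\cdot W$; the commutative subalgebra $\kk[\tau_1,\dots,\tau_r]\subseteq\End(A)$ is then finite dimensional, hence so is $A$, and the induction closes.

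The whole lemma has thus been reduced to the nilpotency of the operators $\tau_a$, and I expect this to be the main obstacle. For $d=2$ the space $L/A=L/[L,L]$ is abelian, so the $z_a$ may be taken among the generators of $L$, and then $\tau_a=\ad(z_a)|_A$ is nilpotent as soon as that generator acts ad-nilpotently on $L$ — which holds in the situations where this lemma gets applied, where $L$ is generated by finitely many unipotent algebraic subgroups, equivalently by locally nilpotent vector fields. For $d\ge 3$ one has $z_a\in L^{(d-2)}\subseteq[L,L]$, and I would pass to the image $\mathfrak a$ of $L$ in $\gl(A)$: it is a finite dimensional solvable Lie algebra (a quotient of $\overline L$) generated by the operators $\ad(x_i)|_A$ attached to the generators $x_i$ of $L$, which are again nilpotent in those situations. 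By \name{Lie}'s theorem a solvable Lie algebra of operators is triangularizable, and the nilpotent operators inside it form a subalgebra (its intersection with the strictly upper triangular ones); since $\mathfrak a$ is generated by nilpotent operators it therefore consists of nilpotent operators, in particular every $\tau_a$. The point to handle with care is that $A$ may a priori be infinite dimensional, so \name{Lie}'s theorem is not directly available on $\gl(A)$; I would circumvent this by first checking that $A$ is a locally finite $\mathfrak a$-module, or by descending to a suitable finite dimensional $\mathfrak a$-stable quotient of $A$, and applying \name{Lie}'s theorem there.
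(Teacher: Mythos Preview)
Your suspicion about the nilpotency of the $\tau_a$ is well founded: it cannot be deduced from the stated hypotheses, because the lemma is \emph{false} as written. Take $L=\kk x\ltimes\kk[t]$, where $\kk[t]$ is abelian and $x$ acts by multiplication by $t$; equivalently, $L$ has basis $x,y_0,y_1,\ldots$ with $[x,y_i]=y_{i+1}$ and $[y_i,y_j]=0$. Then $L$ is generated by $x$ and $y_0$, is solvable of derived length $2$, and is infinite dimensional. In your notation (with $d=2$) one may take $z_1=x$, and $\tau_1=\ad(x)|_A$ is the shift $y_i\mapsto y_{i+1}$, which is not nilpotent---precisely the obstruction you anticipated. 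The paper's own proof asserts that ``every member $L^{(i)}$ is finitely generated'', but in this example $L^{(1)}=\Span(y_1,y_2,\ldots)$ is abelian and infinite dimensional, hence not finitely generated as a Lie algebra; so the paper's argument breaks at the same point.

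Your proposed repair---adding the hypothesis that the generators act ad-locally-nilpotently, as holds in the application to Theorem~B where the $\eta_i$ are locally nilpotent vector fields---does rescue the case $d=2$, and in fact more cleanly than you indicate. Since $L$ is spanned by left-normed brackets in the generators $x_1,\ldots,x_m$, one gets $A=\kk[\tau_1,\ldots,\tau_m]\cdot W$ \emph{directly} (with $W=\Span\{[x_i,x_j]\}$ and $\tau_i=\ad(x_i)|_A$), not merely the weaker relation $A=W+\sum_a\tau_a(A)$; then commuting locally nilpotent operators applied to the finite dimensional $W$ yield a finite dimensional $A$, and no global nilpotency is needed. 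For $d\ge 3$ your plan to pass to the finite dimensional solvable image $\mathfrak a\subset\gl(A)$ is reasonable, but as you yourself flag, invoking \name{Lie}'s theorem requires first showing that $A$ is a locally finite $\mathfrak a$-module; that step is not yet justified and must be filled in before the argument closes.
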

\begin{proof}
In the derived series
$$
L^{(0)}:=L \supset L^{(1)}:=[L^{(0)},L^{(0)}] \supset L^{(2)}:=[L^{(1)},L^{(1)}] \supset \cdots \supset L^{(n)} = \{0\}
$$
every member $L^{(i)}$ is finitely generated. Therefore $L^{(i)}/L^{(i+1)}$ is a commutative and finitely generated Lie algebra, 
hence finite dimensional, and the claim follows.
\end{proof}

\begin{proof}[Proof of Theorem~\ref{thmB-new}] 
(a) We first consider the case where $I$ is finite.
The closure $\overline{G}$ is solvable (Lemma~\ref{solvableGr.lem}(2)) and so $\Lie\overline{G}$ is solvable 
(Lemma~\ref{solvableLA.lem}(2)). It follows that $L(G)= \langle \Lie H_i\mid i=1,\ldots,m\rangle_{\Lie}$ is also solvable 
(Theorem~\ref{main1.thm}(\ref{item1})), hence finite dimensional (Lemma~\ref{finitelygeneratedLA.lem}). Now the claim follows from Theorem~\ref{main2.thm}.
\ps
(b) 
In general, it follows from (a) that for any finite subset $F \subset I$ the subgroup $G_F:=\langle H_i \mid i \in F\rangle$ is a connected solvable algebraic subgroup of $\G$. If $I$ is countable we have an ascending filtration $F_0 \subset F_1 \subset F_2 \subset \cdots \subset I$ by finite subsets, and so $G = \bigcup_j G_{F_j}$ is nested.
\ps
(c)
For a general $I$ we first remark that the Lie algebra $L \subset \VF(X)$ generated by the $\Lie H_i$ has countable dimension, and so $L = \langle \Lie H_j\mid j\in J\rangle_{\Lie}$ for a countable subset $J \subset I$. We claim that $G$ is generated by the $H_j$, $j \in J$, which implies by (b) that $G$ is nested. In fact, 
for any $H_i$ there are finitely many $H_{j_1},\ldots,H_{j_m}$ with $j_k \in J$ such that $\Lie H_i \subset \langle \Lie H_{j_k}\mid k=1,\ldots,m\rangle_{\Lie}$. Thus $\Lie H_i \subset \Lie \tilde{H}_i$ where $\tilde{H}_i$ is the solvable algebraic subgroup generated by the $H_{j_k}$, and so  $H_i \subset \tilde{H}_i$ by \cite[Remark~17.3.3]{FuKr2018On-the-geometry-of}.
\ps
(d) 
If the nested subgroup $G=\bigcup_k G_k$ does not contain semisimple elements, then $G=U_G$ is a nested unipotent group. Otherwise we choose a torus $T \subset G$ of maximal dimension. We can assume that $T$ is contained in all $G_k$, hence $G_k = U_{G_k}\rtimes T$ for all $k$, and $U_{G_k} \subset U_{G_{k+1}}$. Thus $U_G := \bigcup_k U_{G_k}$ is a nested unipotent group containing all unipotent elements of $G$, and $G = U_G\rtimes T$.
\end{proof}
\ps
\subsection{Triangulation in \texorpdfstring{$\Aut(\An)$}{Aut(An)}}
By definition, an automorphism $\phi$ of $\An$ belongs to the  de Jonqui\`eres subgroup $\Jonq(n)$ (see Section~\ref{solvability-triangulation.subsec}) if and only if the comorphism $\phi^*$ stabilizes the flag
$$
\kk[x_1,\ldots,x_{n-1},x_n] \supset \kk[x_1,\ldots,x_{n-1}] \supset\cdots\supset \kk[x_1,x_2] \supset \kk[x_1].
$$
Equivalently, $\phi$ stabilizes the coflag
$$
\FFF \colon \An \to \AA^{n-1} \to \AA^{n-2} \to \cdots \to \AA^2 \to \AA^1
$$
which means that $\phi$ induce an automorphism on each $\AA^k$ such that the projections $\AA^k \to \AA^{k-1}$, $(a_1,\ldots,a_n)\mapsto (a_1,\ldots,a_{n-1})$ are $\phi$-equivariant.
\begin{rem}\label{rem1}
$\Jonq(n)$ is a connected, nested solvable group of the form $\Jonq(n) = \Jonq(n)_u \rtimes T$ where $T := \kst^n$ is the standard maximal torus and $\Jonq(n)_u$ is the subgroup of unipotent elements, cf. \cite[Section~15.1]{FuKr2018On-the-geometry-of}. $\Jonq(n)$ has derived length $d(\Jonq(n)) = n+1$  (\cite[Lemma~3.2]{FuPo2018On-the-maximality-}) and thus $d(\Jonq(n)_u) = n$, because $(\Jonq(n),\Jonq(n)) = \Jonq(n)_u$.
\end{rem}
\begin{thm}\label{thmD}
Let $U \subset \Aut(\An)$ be a nested unipotent subgroup. If $U$ has a dense orbit on $\An$, then $U$ acts transitively on $\An$, and $U$ is conjugate to a subgroup of $\Jonq(n)_u$. 
\end{thm}
\begin{proof}
The first statement follows Corollary~\ref{orbits-nested.cor} and the fact, that $U$-orbits are closed.
\ps
For the second claim
we have to construct a $U$-stable coflag $\An \to \AA^{n-1} \to \cdots\to \Aone$ where the projections are of the form $\AA^k \simeq \AA^{k-1} \times \Aone \overset{\text{pr}}{\longrightarrow} \AA^{k-1}$.

Let $U_x \subset U$ be the isotropy group of some point $x \in \An$. By Lemma~\ref{normaliser.lem} below we can find an element $a \in \Norm_U U_x \setminus U_x$. Then the subgroup $A:=\overline{\langle a \rangle}$ is  isomorphic to $\Ga$ and normalizes $U_x$. Then there is a right action of $A$ on $U/U_x \simto \An$ which commutes with the $U$-action. Thus we get an action of $U$ on the quotient $\An\quot A:=\Spec\OOO(\An)^A$ such that the quotient map $p\colon \An \to \An\quot A$ is $U$-equivariant. It follows that $U$ has a dense orbit in $\An\quot A$ and so $\An\quot A \simeq \AA^{n-1}$, because orbits of unipotent groups are closed and isomorphic to affine spaces, cf. \cite[Theorem~11.1.1]{FuKr2018On-the-geometry-of}.

Moreover, the $A$-action on $\An$ admits a local slice, hence a local section of $p$, see \cite[Section 1.4, Principle 1(c)]{Fr2006Algebraic-theory-o}. Since $U$ acts transitively on $\An$ and $p$ is $U$-equivariant there exist  local sections of $p$ in any $z\in \An$ and thus $p\colon \An \to \An\quot A$ is a principal $A$-bundle. By a theorem of Serre \cite[Section 5.1]{Se1958Espaces-fibres-alg} we have $H^1(X,\Ga)=0$ for any affine variety $X$, and hence this bundle is trivial:
$$
\begin{CD}
\An @>{\simeq}>> \AA^{n-1} \times \AA^1 \\
@VV{p}V @VV{\pr}V \\
\An\quot A  @>{\simeq}>>  \AA^{n-1}
\end{CD}
$$
Thus, we have constructed a $U$-equivariant projection $\An \to \AA^{n-1}$, and the claim follows by induction.
\end{proof}
\begin{lem}\label{normaliser.lem}
Let $U \subset \Aut(X)$ be a nested unipotent subgroup. For any point $x \in X\setminus X^U$ the normalizer $\Norm_U U_x$ strictly contains the isotropy group $U_x$.
\end{lem}
\begin{proof}
The claim of the lemma is well-known for a unipotent algebraic group $U$ since such a group is nilpotent. 
In general, $U = \bigcup_k U_k$ with closed unipotent algebraic subgroups $U_k$ such that $U_k \subset U_{k+1}$, and, similarly, $U_x = \bigcup_k (U_k)_x$ with closed unipotent algebraic subgroups $(U_k)_x = U_x \cap U_k$. Corollary~\ref{orbits-nested.cor} implies that we can find a $k_0$ such that 
$(U_k)_x$ has the same orbits on $X$ as $U_x$ for all $k\geq k_0$.

Now we use the following general fact. If a group $G$ acts on a space $X$, and if $x \in X$, then $g \in G$ belongs to the normalizer of the isotropy group $G_x$ if and only if the isotropy group of $gx$ is equal to $G_x$, i.e. if and only if $gx \in X^{G_x}$.

Assume that $g \in U_k$ belongs to the normalizer of $(U_k)_x$. Then $gx \in X^{(U_k)_x} = X^{U_x}$, and so $g$ is in the normalizer of $U_x$. If $g \notin (U_k)_x = U_k \cap U_x$, then $g \notin U_x$, and the claim follows. 
\end{proof}

It is a basic fact in algebraic transformation groups that every affine $G$-variety $X$ admits a closed $G$-equivariant embedding $X \into V$ where $V$ is $G$-module. The corresponding statement for ind-groups does not hold, see \cite[Proposition~2.6.5]{FuKr2018On-the-geometry-of}.

\begin{que}\label{triangulation}  Let $G \subset \Aut(X)$ be a solvable or nilpotent connected subgroup. Does there exists a closed embedding $X \into \An$ such that $G$ extends to a subgroup of the de Jonqu\`eres group $\Jonq(n)$? Is this true if $G$ is nested?
\end{que}
\ps
\subsection{Nested unipotent subgroups of \tp{$\Aut(X)$}{Aut(X)} are solvable}\label{nested-unipotent.subsec}
We know from Theorem B that a solvable subgroup 
$G$ of $\Aut(X)$ generated by a family of unipotent 
algebraic groups is a nested unipotent group.
The next result shows that the converse holds as well.
\begin{thm}\label{thmC} 
A nested unipotent subgroup $U \subset \Aut(X)$ is 
solvable of derived length $\leq \max\{\dim Ux\mid x \in X\}\leq \dim X$.
\end{thm}
\begin{proof} 
It suffices to consider the case of a unipotent algebraic subgroup $U \subset \Aut(X)$.  Then every orbit $O=Ux$ is closed and isomorphic to an affine space. Therefore, by Theorem \ref{thmD} and Remark~\ref{rem1}, the image of $U$ in $\Aut(O)$ has derived length $\leq \dim O$. If $m := \max\{\dim Ux \mid x \in X\}$, then the $m$th member $U^{(m)}$ of the derived series of $U$ acts trivially on every orbit, hence on $X$, and so $U^{(m)} = \{e\}$.
\end{proof}

\begin{cor}\label{cor:nested-solvable}
\be
\item
A solvable algebraically generated subgroup $G\subset\Aut(X)$
is of derived length $\le \dim X+1$.
\item
Let $\G=\bigcup_k \G_k\subset\Aut(X)$ be a connected nested subgroup where
the $\G_k$ are solvable algebraic groups. Then $\G$ is solvable 
of derived length $\le \dim X+1$.
\ee
\end{cor}
\begin{proof}
(1) By Theorem~\ref{thmB-new}, $G=U_G\rtimes T$ 
where $T\subset G$ is a maximal torus 
and $U_G$ is a nested unipotent subgroup. 
By Theorem~\ref{thmC}, $G$ is solvable of derived length $\le \dim X+1$.
\ps
(2) By Lemma~\ref{nested.lem1}(3) we may suppose that all the $\G_k$ are connected. 
Thus, $\G_k=U_k\rtimes T_k$ 
where $U_k=R_u(\G_k)$ and $T_k$ is the maximal torus of $\G_k$. 
Clearly, we have $U_k\subset U_{k+1}$, and we can arrange that 
$T_k\subset T_{k+1}$ for all $k$. 
Then again $\G=U\rtimes T$ where $U:=\bigcup_k U_k$ and $T:=\bigcup_k T_k$ 
is a maximal torus of $\G$ of dimension $\le \dim(X)$.
Now the assertion follows from Theorem~\ref{thmC} above.
\end{proof}

\begin{rem}\label{rem2} 
A unipotent algebraic group is nilpotent, but the nilpotency class of unipotent subgroups of $\Aut(X)$ might not be bounded, and thus a nested unipotent subgroup is not necessarily nilpotent.

For example, $\JJJ_n:=\Jonq(n)_u$ is a closed nested unipotent subgroup of $\Aut(\An)$. For $n\ge 2$ its Lie algebra $\Lie\JJJ_n$ is not nilpotent, and so $\JJJ_n$ is neither, by Lemma \ref{solvableLA.lem}(3). 
Consider, for instance, the case $n=2$, and define 
\[
L_d := \langle \partial/\partial x_1, x_1^d\partial/\partial x_2\rangle\subset \Lie\JJJ_2.
\]
It is easily seen that the $d$th member $(L_d)_d$ of the lower central series of $L_d$ does not vanish, and so $(\Lie\JJJ_2)_d\neq 0$ for any $d\geq 1$. 
\end{rem}

\ps
\section{An interesting example}\label{sec:example}
In this section we study the subgroup $F \subset \Aut(\Atwo)$ generated by $\bu:=(x+y^2,y)$ and $\bv:=(x,y+x^2)$. 
We will show that $F$ is a free group in two generators and we will describe the closure $\F:=\overline{F}$ and its Lie algebra $\Lie \F$.
It turns out that the following holds (see Theorem~\ref{example.thm}).
\be
\bullitem
$\F$ is a free product of two nested closed abelian unipotent ind-subgroups $\J$ and  $\J^- \subset \Aut(\Atwo)$. In particular, $\F$ is torsion free.
\bullitem
As a Lie algebra, $\Lie \F$ is generated by $\Lie U$ and $\Lie V$ where $U:=\overline{\langle \bu \rangle} \simeq \Ga$ 
and $V:=\overline{\langle \bv \rangle} \simeq \Ga$.
\bullitem 
Any algebraic subgroup of $\F$ is abelian and unipotent and is conjugate to a  subgroup of $\J$ or $\J^-$.
\ee
\ps
\subsection{Notation and first results}\label{notation}
Let 
\[
T:=\{(ax,by) \mid a,b \in \kk^*\} \subset \GL_2(\kk)\subset\Aut(\Atwo)
\] 
denote the standard 2-dimensional torus, and let
\[
\SAut(\Atwo):= \{\bg \in \Aut(X) \mid \Jac(\bg) = 1\}
\]
where the {\it Jacobian} of $\bg = (f,h)$ is defined in the usual way: 
$\Jac(\bg) = \det \left[\begin{smallmatrix}  \frac{\partial f}{\partial x} & \frac{\partial f}{\partial y} \\ \frac{\partial h}{\partial x} & \frac{\partial h}{\partial y} \end{smallmatrix}\right]$. 
Furthermore, we set

$$
\Aut_0(\Atwo) := \{\bg \in \Aut(\Atwo) \mid \bg(0) = 0\},
$$
the group of automorphisms fixing the origin, and 
$$
\SAut_0(\Atwo):=\SAut(\Atwo)\cap\Aut_0(\Atwo).
$$
There is a canonical homomorphism $D\colon\Aut_0(\Atwo) \to \GL_2(\kk)$ defined by $D(\bg):=(d\bg)_0$.
If $\bg=(f,h)$, then $\bg \in \Aut_0(\Atwo)$ if and only if $f(0)=h(0)=0$, i.e. $f$ and $h$ have no constant terms, and $D(\bg) = (f_1,h_1)$, the endomorphism given by the linear terms of $\bg\in\Aut_0(\Atwo)$.
We will be interested in the kernel of $D$:
\begin{align*}
\A:=\ker D &= \{\bg \in \Aut_0(\Atwo) \mid (d\bg)_0 = \id\}\\
&= \{(f,h) \in \Aut_0(\Atwo) \mid (f_1,h_1) = (x,y)\}.
\end{align*}
We get a split exact sequence of ind-groups
\[\tag{$*$}
\begin{CD}
1 @>>> \A @>>> \Aut_0(\Atwo) @>D>> \GL_2(\kk) @>>> 1
\end{CD}
\]
Thus $\A$ is a closed path-connected ind-subgroup of $\Aut(\Atwo)$ which contains
$\bu=(x+y^2,y)$ and $\bv=(x,y+x^2)$, and so $\F \subset \A$. 

Let $S:=\{(\zeta x, \zeta^2 y)\mid \zeta^3=1\} \subset T\cap \SLtwo \subset \SAut(\Atwo)$ be the cyclic diagonal subgroup of order 3.
The elements of $S$ commute with $\bu$ and $\bv$ and thus with all elements from $F$ and hence from $\F=\overline{F}$. 

\begin{lem}\label{example.lem} 
One has  $S=\Cent_{\Aut(\Atwo)}(F)$.
\end{lem}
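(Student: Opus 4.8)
The plan is to show both inclusions. The inclusion $S \subset \Cent_{\Aut(\Atwo)}(F)$ is already observed in the text: the diagonal elements $(\zeta x, \zeta^2 y)$ with $\zeta^3 = 1$ conjugate $\bu = (x+y^2, y)$ to $(\zeta^{-1}(\zeta x + \zeta^4 y^2), \zeta^{-2}\zeta^2 y) = (x + y^2, y)$ and similarly fix $\bv$, hence commute with every element of $F = \langle \bu, \bv\rangle$. So the content is the reverse inclusion $\Cent_{\Aut(\Atwo)}(F) \subset S$.

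First I would take an arbitrary $\bg \in \Aut(\Atwo)$ commuting with both $\bu$ and $\bv$ and extract constraints from each commutation relation separately. Writing $\bg = (f(x,y), h(x,y))$, the identity $\bg \circ \bu = \bu \circ \bg$ reads $(f(x+y^2,y), h(x+y^2,y)) = (f(x,y) + h(x,y)^2, h(x,y))$. The second coordinate gives $h(x+y^2, y) = h(x,y)$, i.e.\ $h$ is invariant under the shift $x \mapsto x + y^2$; since this shift generates a $\Ga$-action on $\Atwo$ whose ring of invariants is $\kk[y]$, we conclude $h = h(y)$ depends on $y$ alone. Symmetrically, commuting with $\bv = (x, y+x^2)$ forces $f = f(x)$ to depend on $x$ alone. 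Then the first coordinate of $\bg\circ\bu = \bu\circ\bg$ becomes $f(x+y^2) = f(x) + h(y)^2$; differentiating in $x$ gives $f'(x+y^2) = f'(x)$ for all $x,y$, so $f'$ is constant, $f(x) = ax + c$; plugging back, $c + a y^2 = h(y)^2 - (\text{const})$ wait — more carefully $f(x+y^2) - f(x) = a y^2 = h(y)^2 + (f(x) - f(x)) $, so $h(y)^2 = a y^2$, forcing $h(y) = \beta y$ with $\beta^2 = a$ (no constant term, by comparing lowest-degree terms, and $h$ a square forces the linear form). Running the symmetric argument with $\bv$ gives $h(y) = b y + c'$ and $f(x)^2 = b x^2$. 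Combining: $\bg$ is linear diagonal, $\bg = (ax, by)$ with $a = b^2$ and $b = a^2$, hence $a = a^4$, $b = b^4$; since $\bg$ is invertible $a, b \neq 0$, so $a^3 = b^3 = 1$ and $b = a^2$. That is exactly $\bg \in S$.

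I would organize the computation so the two commutation relations are exploited in parallel, stating a small lemma that a polynomial invariant under $x \mapsto x+y^2$ lies in $\kk[y]$ (and symmetrically), which is the only non-formal input and follows from $\bu$ generating a $\Ga$-action with invariant ring $\kk[y]$, or directly by viewing $f(x+y^2,y)-f(x,y)$ as a polynomial identity in the independent variable entering additively. The main obstacle — really the only place needing care — is pinning down that the first coordinate $f$ and second coordinate $h$ are genuinely \emph{linear} diagonal and carry no constant or higher-degree terms: this comes from matching degrees in the relation $f(x+y^2) = f(x) + h(y)^2$ (the left side has $x$-degree equal to $\deg f$, forcing $\deg f \le 1$ after the $x$-derivative argument, and then $h(y)^2 = a y^2$ with $h$ a polynomial forces $h = \pm\sqrt{a}\, y$). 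Everything else is bookkeeping, and the conclusion $a^3 = 1$, $b = a^2$ identifies $\Cent_{\Aut(\Atwo)}(F)$ with $S$.
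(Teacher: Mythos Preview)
Your proof is correct. The paper's argument is more geometric: it observes that an automorphism $\bg$ commuting with $\bu$ and $\bv$ commutes with the one-parameter groups $U=\overline{\langle\bu\rangle}$ and $V=\overline{\langle\bv\rangle}$, hence fixes their unique common fixed point (the origin) and permutes the $U$-orbits (horizontal lines) and the $V$-orbits (vertical lines); this already forces $\bg\in T$, and then a short computation with the relations $\bg\bu=\bu\bg$, $\bg\bv=\bv\bg$ on diagonal matrices yields $\bg\in S$. Your route reaches the same intermediate conclusion $f=f(x)$, $h=h(y)$ by writing out the commutation identities explicitly and reading off $\Ga$-invariance coordinate by coordinate, and then you extract linearity from the functional equation $f(x+y^2)=f(x)+h(y)^2$ rather than from the fact that $(f(x),h(y))$ is an automorphism of $\Atwo$ fixing the origin. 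The paper's phrasing is cleaner and more conceptual (and generalizes more readily to other pairs of $\Ga$-actions), while yours is entirely elementary and self-contained, requiring no appeal to orbit structure or closures. One cosmetic point: the ``wait --- more carefully'' aside and the partially redundant second pass through the symmetric relation should be cleaned up in a final write-up, but the mathematics is sound.
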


\begin{proof}
Note that any $\bg \in \Aut(\Atwo)$ commuting with $\bu$ and $\bv$, and therefore with $U$ and $V$, 
fixes the origin which is the unique common fixed point of $U$ and $V$. It also leaves invariant the set of orbits of $U$ and of $V$, 
i.e. the pencils of horizontal and vertical lines. Hence, $\bg \in T$, and an easy calculation shows that, indeed, $\bg\in S$.
\end{proof}

Considering the action of $S$ on $\Aut(\Atwo)$ by conjugation, one sees that $\SAut(\Atwo)$ and $\A$ are stable under this action, and thus
\begin{equation*}
\F \subset \A^S := \{\bg \in \A \mid \bg \circ s = s \circ \bg \text{ for all } s \in S\}.
\end{equation*}
Moreover, $S$ also acts on the Lie algebras $\Lie \Aut(\Atwo)$ and $\Lie\A$, and so
\beq\label{G-equ}
\Lie \F \subset \Lie \A^S \subset (\Lie\A)^S.
\eeq
\ps
\subsection{The Lie algebra of \tp{$\F$}{F}}\label{Lie-algebra.subsec}
The {\it divergence $\Div$} of a vector field $\delta = f \frac{\partial}{\partial x} + h \frac{\partial}{\partial y}$ 
is defined as $\Div \delta = \frac{\partial f}{\partial x} + \frac{\partial h}{\partial y}$. 
For the Lie algebras of $\Aut(\Atwo)$, $\SAut(\Atwo)$ and $\Aut_0(\Atwo)$ we have the following description:
\be
\bullitem
$\Lie\Aut(\Atwo) \simto \VF^c(\Atwo):=\{\delta\in\VF(\Atwo) \mid \Div \delta \in \kk\}$,
\bullitem 
$\Lie\SAut(\Atwo) \simto \VF^0(\Atwo):=\{\delta\in\VF(\Atwo) \mid \Div \delta =0 \}$,
\bullitem
$\Lie\Aut_0(\Atwo) \simto \VF_0(\Atwo):= \{\delta\in\VF^c(\Atwo) \mid \delta(0) = 0\}$,
\ee
The first two are given in \cite[Proposition~15.7.2]{FuKr2018On-the-geometry-of}, and the last is an immediate consequence. The split exact sequence $(*)$ from the previous section~\ref{notation} implies that 
\begin{align*}
\Lie \A &= \ker (dD_e\colon \VF_0(\Atwo) \to \gl_2)\\
&= \{\delta=f {\frac{\partial}{\partial x}} + h {\frac{\partial}{\partial y}}\in\VF_0(\Atwo) \mid f_1=h_1=0\}.
\end{align*}
Now consider the following vector fields from $\VF^0(\Atwo)$,
\[
\p_{i,j}: =(j+1)x^{i+1}y^{j}\frac{\p}{\p x} - (i+1)x^{i}y^{j+1}\frac{\p}{\p y}
\]
where
\[
(i,j) \in \Lambda := \{(k,l)\in \ZZ^2\,\mid k,l\geq -1, k+l \geq 0 \}.
\]
The torus $T$ acts by conjugation on $\Aut(\Atwo)$ and on $\VF(\Atwo)$, and the $\p_{i,j}$ are 
eigenvectors of weight $(i,j)$. Moreover, the action of $T$ on $\VF^0(\Atwo)$ is multiplicity-free, and we have the weight decompositions
\[
\VF^0(\Atwo) = \bigoplus_{(i,j)\in \Lambda} \kk \p_{i,j} \text{ \ and \ }
\Lie\A = \bigoplus_{(i,j)\in \Lambda, i+j>0} \kk \p_{i,j}.
\]
Note that $\p_{i,j}$ is fixed by the finite subgroup $S\subset T$ if and only if $i-j\equiv 0 \mod 3$. 
Setting $\Lambda_0:=\{(k,l)\in\Lambda \mid k-l \equiv 0 \mod 3, (k,l)\neq (0,0)\}$ we get
\beq\label{Lie-AS.eq}
\Lie \A^S \subset (\Lie \A)^S = \bigoplus_{(i,j)\in \Lambda_0} \kk \p_{i,j}.
\eeq
The next lemma shows that $\Lie\F = \Lie \A^S = (\Lie\A)^S$. In contrast to the situation of algebraic groups, 
this does not automatically  imply that $\F = \A^S$. In fact, \name{Furter-Kraft} give an example of a closed ind-subgroup 
of a path-connected ind-group which has the same Lie algebra, but is strictly smaller (see \cite[Theorem~17.3.1]{FuKr2018On-the-geometry-of}). 
In order to show that $\F = \A^S$ (see Theorem~\ref{example.thm}) we therefore need an additional argument.
\begin{lem}\label{example-lie.lem}
The Lie algebra $(\Lie\A)^S$ is generated by $\Lie U$ and $\Lie V$, hence $\Lie \F = \Lie \A^S = (\Lie\A)^S$.
\end{lem}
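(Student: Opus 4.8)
The goal is to show that the Lie algebra $\mathfrak{L} := \langle \Lie U, \Lie V\rangle_{\text{\tiny Lie}}$ coincides with $(\Lie\A)^S = \bigoplus_{(i,j)\in\Lambda_0}\kk\,\p_{i,j}$. One inclusion is free: $\mathfrak{L}\subseteq\Lie\F$ because $U,V\subseteq F$, and $\Lie\F\subseteq(\Lie\A)^S$ by \eqref{G-equ} and \eqref{Lie-AS.eq}. So the whole content is the reverse inclusion $(\Lie\A)^S\subseteq\mathfrak{L}$, i.e.\ every $\p_{i,j}$ with $(i,j)\in\Lambda_0$ lies in the Lie algebra generated by $\p_{1,0}$ and $\p_{0,1}$ (these being, up to scalars, the locally nilpotent fields spanning $\Lie U$ and $\Lie V$: $\bu = (x+y^2,y)$ has associated field $y^2\frac{\p}{\p x}$, which is $\p_{-1,1}$ up to scalar; similarly $\bv$ gives $x^2\frac{\p}{\p y}\sim\p_{1,-1}$). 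First I would fix the exact normalization: record $\Lie U = \kk\,\p_{-1,1}$ and $\Lie V = \kk\,\p_{1,-1}$ (check signs/coefficients against the definition of $\p_{i,j}$), and note $(-1,1),(1,-1)\in\Lambda_0$ since $-1-1=-2\not\equiv 0$... wait — one must verify $(-1,1)$ and $(1,-1)$ actually lie in $\Lambda_0$: $-1-1 = -2 \equiv 1 \bmod 3$, so they do \emph{not}; the correct generators of $\Lie U,\Lie V$ must be the fields with weights making them $S$-invariant. I would recompute: the tangent vector to $\overline{\langle\bu\rangle}$ is the l.n.\ derivation $\delta_\bu$ with $\delta_\bu(x)=y^2$, $\delta_\bu(y)=0$, and one writes this in the basis $\{\p_{i,j}\}$ — it is a scalar multiple of $\p_{-1,1}$, whose weight is $(-1,1)$; however $S$-invariance of the \emph{field} is automatic here because $\bu$ commutes with $S$, so $(-1,1)$ must in fact satisfy $-1-1\equiv 0$, forcing a re-examination of the congruence ``$i-j\equiv 0$'' versus ``$i+j\equiv 0$'' — I would simply trust the paper's $S$-fixed condition and the fact that $\bu,\bv$ commute with $S$, and record the two generating weights accordingly.

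The core combinatorial step is the following bracket computation. Using the Lie bracket of the $\p_{i,j}$, which from the explicit formula takes the shape
\[
[\p_{i,j},\p_{k,l}] = c_{(i,j),(k,l)}\,\p_{i+k,\,j+l}
\]
for an integer structure constant $c_{(i,j),(k,l)}$ (a polynomial in $i,j,k,l$ that one computes once), I would show that starting from the two generators one can reach every weight in $\Lambda_0$ and that the structure constants encountered are nonzero. Concretely: the weights of the generators are two primitive lattice vectors; their $\ZZ_{\geq 0}$-combinations, together with the $\Lambda$ constraint $k,l\geq -1$, should exhaust $\Lambda_0\setminus\{(0,0)\}$. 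I would run an induction on $i+j$ (the $T$-grading degree): at each step, express a target $\p_{i,j}$ as a bracket $[\p_{a,b},\p_{i-a,j-b}]$ of two fields of strictly smaller degree already known to be in $\mathfrak{L}$, choosing the splitting $(a,b),(i-a,j-b)$ so that both lie in $\Lambda_0$ and $c_{(a,b),(i-a,j-b)}\neq 0$. The base cases are the generators themselves and possibly a small finite list of low-degree weights handled by hand.

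The main obstacle I anticipate is the \emph{non-vanishing of the structure constants} along the chosen path, and the \emph{reachability} of boundary weights where $i=-1$ or $j=-1$ (these lie on the edge of $\Lambda$ and have fewer admissible splittings). For the structure constants: from $\p_{i,j} = (j+1)x^{i+1}y^j\frac{\p}{\p x} - (i+1)x^i y^{j+1}\frac{\p}{\p y}$ one gets (after a direct but slightly tedious computation) a formula like $c_{(i,j),(k,l)} = \det\left[\begin{smallmatrix} i+1 & k+1\\ j+1 & l+1\end{smallmatrix}\right]$ up to sign and a constant — i.e.\ proportional to $(i+1)(l+1)-(j+1)(k+1)$. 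If that is right, the bracket vanishes exactly when $(i+1,j+1)$ and $(k+1,l+1)$ are parallel, which one must avoid; since the two generators' shifted weights are linearly independent and $\gcd$-related appropriately, a careful choice of splitting keeps the determinant nonzero except in controlled degenerate cases, which can then be circumvented by a different splitting or by first producing an auxiliary element in a generic direction. For the boundary weights, I would handle $\p_{-1,j}$ and $\p_{i,-1}$ separately, obtaining them as brackets $[\p_{-1,1},\p_{0,j-1}]$, etc., and checking the determinant condition there explicitly. Once all $\p_{i,j}$, $(i,j)\in\Lambda_0$, are shown to lie in $\mathfrak{L}$, we get $(\Lie\A)^S\subseteq\mathfrak{L}\subseteq\Lie\F\subseteq(\Lie\A)^S$, forcing equality throughout, which is exactly the assertion $\Lie\F = \Lie\A^S = (\Lie\A)^S$.
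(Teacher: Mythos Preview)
Your overall strategy is the same as the paper's, and it is correct: show by a lattice/bracket argument that every $\p_{i,j}$ with $(i,j)\in\Lambda_0$ lies in the Lie algebra generated by $\Lie U$ and $\Lie V$, then use the chain of inclusions from \eqref{G-equ} and \eqref{Lie-AS.eq}. However, you have a concrete computational slip that is the source of all your confusion about the congruence condition. The derivation $\delta_\bu$ with $\delta_\bu(x)=y^2$, $\delta_\bu(y)=0$ is \emph{not} a multiple of $\p_{-1,1}$: in the formula $\p_{i,j}=(j+1)x^{i+1}y^j\frac{\p}{\p x}-(i+1)x^i y^{j+1}\frac{\p}{\p y}$ one needs $i+1=0$ and $j=2$ to get $y^2\frac{\p}{\p x}$, so $\Lie U=\kk\,\p_{-1,2}$ and symmetrically $\Lie V=\kk\,\p_{2,-1}$. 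With these correct weights one has $i-j=-3$ and $i-j=3$, both $\equiv 0\bmod 3$, so the generators visibly lie in $\Lambda_0$ and your worry evaporates.

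With the correct generators the paper's argument is actually simpler than your proposed splitting scheme: one checks directly (and your own determinant formula $c_{(i,j),(k,l)}\propto (i+1)(l+1)-(j+1)(k+1)$ confirms it) that $\ad\p_{-1,2}(\p_{i,j})$ is a nonzero multiple of $\p_{i-1,j+2}$ whenever $i\geq 0$, and $\ad\p_{2,-1}(\p_{i,j})$ is a nonzero multiple of $\p_{i+2,j-1}$ whenever $j\geq 0$. So instead of choosing arbitrary splittings $(a,b)+(i-a,j-b)$ and worrying about vanishing determinants, you only ever bracket with the two fixed generators; the step vectors $(-1,2)$ and $(2,-1)$ let you walk from $(-1,2)$ and $(2,-1)$ to every point of $\Lambda_0$ while staying in $\Lambda$, and the nonvanishing condition is just $i\geq 0$ (resp.\ $j\geq 0$), which is always satisfiable at each step. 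Your more general splitting approach also works, but it introduces bookkeeping (boundary cases, parallel-vector degeneracies) that the paper's iterated-$\ad$ approach avoids entirely.
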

\begin{proof}
We have $\Lie U=\kk\p_{-1,2}$ and $\Lie V=\kk\p_{2,-1}$. An easy calculation shows that 
\[
\ad\p_{-1,2} (\p_{i,j})=a\,\p_{i-1,j+2} \, (i\geq 0)  \text{ and } \ad\p_{2,-1} (\p_{i,j}) = b \, \p_{i+2,j-1} \, (j\geq 0)
\] 
with non-zero constants $a,b \in \kk$. This implies that the Lie algebra generated by $\p_{-1,2}$ and $\p_{2,-1}$ 
contains all $\p_{i,j}$ with $(i,j) \in \Lambda_0$, hence  $\langle \Lie U, \Lie V\rangle_{\text{Lie}} \supset (\Lie \A)^S$, 
see formula \eqref{Lie-AS.eq}. Since $\Lie U, \Lie V \subset \Lie \F$ the claim follows from the inclusions in \eqref{G-equ}.
\end{proof}
It is easy to see that the vector field $\p_{i,j}$ is locally nilpotent if and only if $i = -1$ or $j=-1$. In these cases, 
there are unique algebraic subgroups $U_{-1,k} \simeq \Ga$ and $U_{k,-1} \simeq \Ga$ with $\Lie U_{-1,k} = \kk\p_{-1,k}$ and $\Lie U_{k,-1} = \kk\p_{k,-1}$. 
Note that $U_{-1,2} = U =\overline{\langle\bu\rangle}$ and $U_{2,-1} = V =\overline{\langle\bv\rangle}$. 
Moreover, the subgroups $U_{-1,k}$ and $U_{-1,l}$ commute as well as the vector fields $\p_{-1,k}$ and $\p_{-1,l}$.
\ps
\subsection{De Jonqui\`eres subgroups and amalgamated products}\label{Jonquiere.subsec} 
This and the subsequent sections are largely inspired by \cite[Section~4]{ArZa2013Acyclic-curves-and}.
Consider the  de Jonqui\`eres group 
\[
\Jonq:= \{(ax+h(y), cy + d) \mid a,c\in\kk^*, d\in \kk, h \in \kk[y]\}
\]
and its subgroups
\begin{gather*}
\Jonq_0:= \{(ax+yf(y),by) \mid a,b \in \kk^*, f\in\kk[y]\} = \Jonq \cap \Aut_0(\Atwo), \\
\J_0:=\{(x+yf(y),y) \mid f\in \kk[y]\} \subset \Jonq_0. 
\end{gather*}
$\J_0$ is a  commutative closed unipotent ind-subgroup, isomorphic to $\kk[y]^+$. 
It admits a filtration by closed unipotent subgroups $\prod_{i=1}^d U_{-1,i}\simeq \Ga^{d}$, $d\geq 1$. Note that $\Jonq_0 = \J_0 \rtimes T$.
\begin{lem}\label{free-product.lem}
The group $\A^S$ is the free product $(\J_0)^S * (\J_0^-)^S$ where   $\J_0^-:= \tau\circ \J_0 \circ
\tau$ and $\tau:=[\begin{smallmatrix} 0 & 1 \\ 1 & 0 \end{smallmatrix}]$.
\end{lem}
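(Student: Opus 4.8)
The plan is to prove that $\A^S = (\J_0)^S * (\J_0^-)^S$ by invoking the classical amalgamated free product structure of $\Aut(\Atwo)$ and then carefully intersecting with the relevant subgroups. Recall the \name{Jung--van der Kulk} theorem: $\Aut(\Atwo) = \Aff \ast_{B} \Jonq$, the amalgamated product of the affine group and the \name{de Jonqui\`ere} group over their intersection $B = \Aff \cap \Jonq$. First I would restrict this to the subgroup $\A$ of automorphisms with trivial linear part fixing the origin. One checks that under the normal-form/reduced-word description of the amalgam, an element of $\A$ that is not in $\A \cap \Jonq = \J_0$ can be written in a canonical alternating form; the relevant fact is that $\A$ is itself a free product, $\A = \J_0 \ast \J_0^-$, where $\J_0^- = \tau \J_0 \tau$ is the ``transposed'' copy obtained by conjugating with the coordinate swap $\tau$. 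This reduction should follow by a standard argument with the Jung--van der Kulk normal form: any $\bg\in\A$ decomposes as an alternating product of elements of $\J_0$ and of $\J_0^-$, and uniqueness of the reduced word in $\Aut(\Atwo)$ forces uniqueness here too.

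Next I would take $S$-fixed points. Since $S$ is a finite (hence linearly reductive) group acting on the ind-group $\A$ by conjugation, and since $\A = \J_0 \ast \J_0^-$ with $S$ permuting — in fact stabilizing — each free factor (note $S \subset T$ normalizes $\J_0$, and $\tau$ conjugates $S$ to itself since $S$ is diagonal, so $S$ stabilizes $\J_0^-$ as well), one expects $\A^S$ to be the free product of the fixed-point subgroups $(\J_0)^S \ast (\J_0^-)^S$. The key point making this work is that the action of $S$ preserves the length function of the free product $\A = \J_0 \ast \J_0^-$: if $\bg = w_1 w_2 \cdots w_n$ is the reduced word of $\bg\in\A$ with $w_k$ alternately in $\J_0\setminus\{e\}$ and $\J_0^-\setminus\{e\}$, then $s\bg s^{-1} = (s w_1 s^{-1})(s w_2 s^{-1})\cdots(s w_n s^{-1})$ is again reduced, with $s w_k s^{-1}$ in the same free factor as $w_k$. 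Hence $\bg$ is $S$-fixed if and only if each syllable $w_k$ is $S$-fixed, by uniqueness of reduced words. This is the decisive step and I expect it to be where one must be most careful: one needs that $S$-conjugation genuinely maps each $w_k\in\J_0$ back into $\J_0$ (not merely into the amalgam), which holds because $S\subset T$ and $\J_0 = \J_0\rtimes T$-normalized, and likewise for $\J_0^-$ using $\tau S\tau = S$.

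Concretely, I would carry out the argument in the following order. (1) State Jung--van der Kulk and extract the free-product decomposition $\A = \J_0 \ast \J_0^-$; here one uses that $\tau \notin \A$ but the ``elementary'' triangular generators of $\Jonq_0$ that lie in $\A$ generate $\J_0$, and that $\A$ is generated by $\J_0$ together with $\tau\J_0\tau$, with no relations beyond those within each factor — this last assertion being the content of the normal-form uniqueness inherited from $\Aut(\Atwo)$. (2) Observe $S$ normalizes both $\J_0$ and $\J_0^-$. (3) Show $S$-conjugation preserves reduced words syllable-by-syllable in $\J_0 \ast \J_0^-$. (4) Conclude $\A^S = \{\bg : \text{every syllable of } \bg \text{ is } S\text{-fixed}\} = (\J_0)^S \ast (\J_0^-)^S$.

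The main obstacle will be step (1): carefully justifying that $\A$ sits inside the amalgam $\Aff \ast_B \Jonq$ as the internal free product $\J_0 \ast \J_0^-$, i.e.\ that there are no extra relations among alternating $\J_0$/$\J_0^-$ words beyond those in $\Aut(\Atwo)$. This requires tracking how the coordinate-swap $\tau$ interacts with reduced words of the Jung--van der Kulk amalgam and may need the precise description of when a product of triangular automorphisms collapses. Once that structural fact is in hand, the passage to $S$-fixed points via the length-preservation argument is essentially formal (a standard lemma about free products with a group of automorphisms preserving the factors), and the rest is routine. I would also remark that the same reasoning will be reused in the sequel to analyze $\Lie\F$ and the algebraic subgroups of $\F$, so it is worth stating the $S$-fixed-point free product lemma in slightly greater generality.
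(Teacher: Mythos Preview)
Your Step~(1) is where the approach breaks down, and not merely because it is delicate: the decomposition $\A=\J_0*\J_0^-$ is false. First, $\J_0\not\subset\A$, since e.g.\ $(x+y,y)\in\J_0$ has nontrivial linear part. The natural repair $\A=(\J_0\cap\A)*(\J_0^-\cap\A)$ also fails. Take $c\in\kk^*$ and set
\[
j_1=(x+cy+y^2,y),\quad j_2=j_4=(x+y^2,y),\quad j_3=(x-cy+y^2,y),
\]
and $\bg:=j_1\tau j_2\tau j_3\tau j_4\tau$. Each $j_k\in\J_0$ has degree $\geq 2$, so this is the unique normal form of $\bg$ in the sense of the \name{Jung--van der Kulk} amalgam. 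A direct check shows the linear part of $\bg$ is the identity, hence $\bg\in\A$. But any element of $\langle\J_0\cap\A,\ \J_0^-\cap\A\rangle$, once expanded as an alternating $\J_0/\tau$ word, has all its $\J_0$-syllables in $\J_0\cap\A$ (trivial linear part); by uniqueness of the normal form this would force $j_1\in\J_0\cap\A$, contradicting $c\neq 0$. So $\bg\in\A\setminus\langle\J_0\cap\A,\ \J_0^-\cap\A\rangle$, and $\A$ is not even generated by these two subgroups.

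The paper circumvents this by reversing the order of your two operations: it works in the larger group $\Aut_0(\Atwo)$, where one does have a unique normal form (alternating $j_k\in\J_0$ and $\tau$'s, followed by a single $t\in T$), and imposes $S$-invariance \emph{before} restricting to $\A$. Using $\tau s\tau=s^{-1}$ for $s\in S$, conjugating the normal form by $s$ replaces $j_k$ by $s^{\pm1}j_ks^{\mp1}$ and introduces a parity obstruction; uniqueness then forces the number of $\tau$'s to be even and each $j_k\in(\J_0)^S$. The point is that $(\J_0)^S=\{(x+y^2f(y^3),y)\}$ already consists of elements with trivial linear part, so the problematic linear contributions that spoiled your Step~(1) are killed by the $S$-invariance itself. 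Only after this does one use $d\bg_0=\id$ to see that the trailing $t$ is trivial. In short: it is precisely the $S$-invariance that forces the syllables into $\A$, so you cannot first pass to $\A$ and then take $S$-fixed points.
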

\begin{proof}
(1) 
The amalgamated product structure $\Aut(\Atwo) = \Jonq*_\B \Aff_2$ where $\B:=\Jonq \cap \Aff_2$ 
implies that every element $\bg \in \Aut(\Atwo)\setminus\B$ can be expressed in one of the following 4 forms, called {\it the type} of $\bg$,
\begin{gather*}
\bg = j_1 a_1 j_2 \cdots  j_n, \qquad \bg = j_1a_1j_2 \cdots j_n a_{n},\\
\bg = a_1 j_1 a_2  \cdots j_n, \qquad \bg = a_1 j_1 a_2 \cdots j_n a_{n+1}
\end{gather*}
where $j_k\in\Jonq\setminus\B$ and $a_k \in \Aff_2\setminus\B$. 
The length of the expression, its type and the degrees of the $j_k$ are uniquely determined by $\bg$.
\ps
(2) 
Let now $\bg \in \Aut_0(\Atwo)$. Using $\Jonq=\Jonq_0\rtimes \Trans_2$ and  
$\Aff_2 = \GL_2 \rtimes \Trans_2$ where $\Trans_2 \simeq \Ga^2$ is the group of translations, one can assume that $j_k \in \Jonq_0\setminus B$ 
and $a_k \in \GL_2\setminus B$ where $B \subset \GL_2$ is the Borel subgroup of upper triangular matrices. 
Since $\GL_2 \setminus B = U \tau B = B \tau U$ where $U \subset B$ is the unipotent radical, we see that any $\bg \in \Aut_0(\Atwo)$ has a presentation in one of the 4 forms
\begin{gather*}
\bg = j_1 \tau j_2 \cdots \tau j_n, \qquad \bg = j_1 \tau j_2 \cdots j_n \tau,\\
\bg = \tau j_1 \tau  \cdots \tau j_n, \qquad \bg = \tau j_1 \tau \cdots \tau j_n \tau
\end{gather*}
where $j_k \in \Jonq_0 \setminus B$. Again, the length, the type and the degrees of the $j_k$ are uniquely determined by $\bg$. 
\ps
(3)
Now we remark that 
$\Jonq_0=\mathfrak{J}_0\rtimes T$ and 
$T\tau = \tau T$. It follows that 
we can reach one of the following forms
\begin{gather*}
\bg = j_1 \tau j_2 \cdots \tau j_n t, \qquad \bg = j_1 \tau j_2 \cdots j_n \tau t,\\
\bg = \tau j_1 \tau  \cdots \tau j_n t, \qquad \bg = \tau j_1 \tau \cdots \tau j_n \tau t
\end{gather*}
where $j_k \in \J_0$ and $t \in T$. We claim that this form is uniquely determined by $\bg$. 
Since the type and the length are given by $\bg$, it suffices to compare the similar decompositions
$$
\bg = j_1 \tau j_2 \cdots j_n t = j_1' \tau j_2' \cdots j_n' t'  \text{ and  }
\bg = j_1 \tau j_2 \cdots j_n \tau t = j_1' \tau j_2' \cdots j_n' \tau t'.
$$ 
It follows that
$$
\tau j_2 \cdots j_n t = j_1^{-1}j_1' \tau j_2' \cdots j_n' t'  \text{  \ resp. \ }
\tau j_2 \cdots j_n \tau t = j_1^{-1} j_1' \tau j_2' \cdots j_n' \tau t',
$$
hence $j_1' = j_1$. Now the claim follows by induction.
\ps
(4) 
Using the equalities $st = ts$ and $s\tau s = \tau$ for $s \in S$ and $t \in T$ we find for an $S$-invariant $\bg\in\Aut_0(\Atwo)$ 
of the first form $\bg = j_1 \tau j_2 \cdots j_n t$ in (3) above that
$$
\bg = s \bg s^{-1} = 
\begin{cases}
(s j_1 s^{-1}) \tau (s^{-1} j_2 s) \tau (s j_3 s^{-1})\cdots (s j_n s^{-1}) t & \text{if $n$ is odd,}\\
(s j_1 s^{-1}) \tau (s^{-1} j_2 s) \tau (s j_3 s^{-1})\cdots (s^{-1} j_n s^{-1}) t &  \text{if $n$ is even.}
\end{cases}
$$
In the first case, we get that each $j_k$ is $S$-invariant. In the second case, we obtain  $s j_n s = j_n$, a contradiction. 
Looking at the other forms of $\bg$ in (3) we see that for an $S$-invariant $\bg$  the number of $\tau$'s must be even and each $j_k$ is $S$-invariant, i.e.  $j_k \in (\J_0)^S \subset \A^S$.
\ps
(5)
Finally, assume that $\bg \in \A^S$. If $\bg$ is given in one of the forms of (3), then $t = \id$, because 
$d\bg_0 = \id$ and  $(d j_k)_0 = \id$ for $j_k \in (\J_0)^S$. Since the number of $\tau$'s is even, 
we see that such an automorphism $\bg$ is a product of the form $ \cdots j_k j_k' j_{k+1}j_{k+1}' \cdots$ where $j_k \in (\J_0)^S$ 
and $j_k' \in (\J_0^-)^S = (\tau \J_0 \tau)^S = \tau(\J_0)^S \tau$.
\end{proof}
We have seen in the proof above that every $\bg \in \Aut_0(\Atwo)$ has a unique presentation in one of the 4 forms
\begin{gather*}
\bg = j_1 \tau j_2 \cdots \tau j_n, \qquad \bg = j_1 \tau j_2 \cdots j_n \tau,\\
\bg = \tau j_1 \tau  \cdots \tau j_n, \qquad \bg = \tau j_1 \tau \cdots \tau j_n \tau
\end{gather*}
where $j_k \in \Jonq_0 \setminus B$. As usual, we define the {\it degree\/} of an element $(f,h)\in\Aut(\Atwo)$ as $\deg (f,h):=\max\{\deg f, \deg h\}$. 
The following lemma is known, see e.g. \cite[Lemma~4.1]{Ka1979Automorphism-group}.
\begin{lem}\label{degree.lem}
If $\bg$ is as above, then $\deg \bg = \deg j_1 \cdot \deg j_2 \cdots \deg j_n$.
\end{lem}%
\begin{proof}
It suffices to consider the cases $\bg = \tau j_1 \tau  \cdots \tau j_n = (f_n,h_n)$. We now prove by induction that $\deg f_k > \deg h_k$ 
and that $\deg f_{k+1} = \deg f_k \cdot \deg j_{k+1}$. Clearly, $\deg f_1 > \deg h_1=1$, because $j_1 \notin B$. If $j_{k+1} = (x,y+p(x)$, then 
$$
(f_{k+1},h_{k+1}) = \tau\circ j_{k+1} \circ (f_k,h_k) = (h_k + p(f_k), f_k).
$$
Since $\deg j_{k+1} = \deg p >1$ and $\deg h_k < \deg f_k$ we get $\deg f_{k+1} = \deg p \cdot \deg f_k > \deg f_k$, hence the claim.
\end{proof}

\ps
\subsection{The fixed points under \tp{$S$}{S}}\label{fixed-points.subsec}
Recall that $\F \subset \Aut(\Atwo)$ denotes the closure of the free group $F$ generated by $\bu$ and $\bv$.
\begin{prop}\label{main.prop}
$(\J_0)^S \subset \F$. In particular, $\F = \A^S = (\J_0)^S * (\J_0^-)^S$.
\end{prop}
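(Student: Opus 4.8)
The strategy is to read off the inclusion $(\J_0)^S\subset\F$ from the already established equality $\Lie\F=(\Lie\A)^S$ (Lemma~\ref{example-lie.lem}), exploiting that a \emph{closed} ind-subgroup of $\Aut(\Atwo)$ must contain every one dimensional unipotent group whose Lie line it contains. First I reduce the first assertion to those one dimensional subgroups. Recall $\Lie U_{-1,k}=\kk\p_{-1,k}$ with $\p_{-1,k}=(k+1)y^{k}\frac{\p}{\p x}$, so that $U_{-1,k}=\{(x+cy^{k},y)\mid c\in\kk\}$; since the $\p_{-1,k}$ pairwise commute, the products $U_{-1,2}U_{-1,5}\cdots U_{-1,3d-1}\simeq\Ga^{d}$ consist of the maps $(x+\sum_{j<d}c_{j}y^{3j+2},y)$. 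A direct check -- conjugating $(x+g(y),y)$ by $s=(\zeta x,\zeta^{2}y)$ turns it into $(x+\zeta g(\zeta y),y)$, which forces the exponents of $g$ to be $\equiv 2\pmod 3$ -- shows that $(\J_0)^S$ is exactly the union of these products, i.e.\ $(\J_0)^S=\langle U_{-1,3j+2}\mid j\geq 0\rangle$ as an abstract group. As $\F$ is a group, it therefore suffices to prove $U_{-1,3j+2}\subset\F$ for every $j\geq 0$.

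Fix $j\geq 0$ and set $k:=3j+2$. The weight $(-1,k)$ satisfies $(-1)-k=-3(j+1)\equiv 0\pmod 3$ and $(-1,k)\neq(0,0)$, hence $(-1,k)\in\Lambda_{0}$, so $\p_{-1,k}\in(\Lie\A)^S=\Lie\F$ by Lemma~\ref{example-lie.lem} and formula~\eqref{Lie-AS.eq}. The vector field $\p_{-1,k}$ is locally nilpotent (its first index is $-1$), so by \cite[Proposition~7.6.1]{FuKr2018On-the-geometry-of} the group $U_{-1,k}\simeq\Ga$ is the unique minimal algebraic subgroup of $\Aut(\Atwo)$ with Lie algebra $\kk\p_{-1,k}$. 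Since $\F$ is closed, $\F\cap U_{-1,k}$ is a closed, hence algebraic, subgroup of $U_{-1,k}$, and $\Lie(\F\cap U_{-1,k})=\Lie\F\cap\kk\p_{-1,k}=\kk\p_{-1,k}\neq 0$; as $\Ga$ has no proper positive dimensional closed subgroup, this forces $\F\cap U_{-1,k}=U_{-1,k}$, i.e.\ $U_{-1,k}\subset\F$. Hence $(\J_0)^S\subset\F$.

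For the remaining identities I use the symmetry coming from the coordinate swap $\tau\colon(x,y)\mapsto(y,x)$ of Lemma~\ref{free-product.lem}, for which $\tau\bu\tau=\bv$, $\tau\bv\tau=\bu$ and $\tau^{2}=\id$. Thus $\tau F\tau=F$, so $\tau\F\tau=\overline{\tau F\tau}=\F$; moreover $\tau$ normalizes $S$ (indeed $s\tau s=\tau$ for $s\in S$), whence $\tau(\J_0)^S\tau=(\tau\J_0\tau)^S=(\J_0^-)^S$. Applying $\tau(\cdot)\tau$ to the inclusion just proved yields $(\J_0^-)^S\subset\F$. Consequently $\F$ contains the subgroup generated by $(\J_0)^S$ and $(\J_0^-)^S$, which by Lemma~\ref{free-product.lem} is all of $\A^S=(\J_0)^S*(\J_0^-)^S$; together with the inclusion $\F\subset\A^S$ noted earlier this gives $\F=\A^S=(\J_0)^S*(\J_0^-)^S$.

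The only genuinely delicate ingredient is the equality $\Lie(\F\cap U_{-1,k})=\Lie\F\cap\Lie U_{-1,k}$ for the closed ind-subgroup $\F$ and the algebraic group $U_{-1,k}$: this is precisely what forces the a priori ``thin'' group $\F$ -- the closure of a nonabelian free group on two generators -- to contain each member of the infinite chain of $\Ga$'s $U_{-1,2}\subset U_{-1,2}U_{-1,5}\subset\cdots$, and it is the same mechanism that underlies the correspondence between nested Lie subalgebras and nested ind-subgroups discussed in Section~\ref{sec:AGG}. Everything else in the argument is bookkeeping with the weight decomposition of $\VF^{0}_{0}(\Atwo)$ and with the free product structure of $\A^S$ already in hand.
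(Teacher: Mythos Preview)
Your argument has a genuine gap at the step $\Lie(\F\cap U_{-1,k})=\Lie\F\cap\Lie U_{-1,k}$, which you yourself flag as ``delicate'' but do not justify. For a closed ind-subgroup $\G$ and an algebraic subgroup $H$ of $\Aut(X)$ one always has $\Lie(\G\cap H)\subset\Lie\G\cap\Lie H$, but the reverse inclusion can fail. The paper's own example in Section~\ref{sec:free-sbgrp} makes this concrete: the closed ind-subgroup $\G^t\subsetneqq\G=\Aut_{\kk[z]}(\kk[z][x,y])$ of tame automorphisms satisfies $\Lie\G^t=\Lie\G$, yet the one dimensional unipotent group $U$ attached to the \name{Nagata} derivation has $\Lie U\subset\Lie\G^t$ while $U\not\subset\G^t$; hence $\G^t\cap U=\{e\}$ and $\Lie(\G^t\cap U)=0\neq\Lie U=\Lie\G^t\cap\Lie U$. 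This is precisely the obstruction the paper warns about in the paragraph following Lemma~\ref{example-lie.lem}: knowing $\Lie\F=(\Lie\A)^S$ does not by itself yield $\F=\A^S$, and an ``additional argument'' is required. Your appeal to ``the correspondence between nested Lie subalgebras and nested ind-subgroups'' in Section~\ref{sec:AGG} does not supply one; nothing there establishes that a closed ind-subgroup must contain every $\Ga$ whose Lie line it contains.

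The paper's proof bypasses this trap by an explicit limit construction inside $\Aut(\Atwo)$. Using Lemma~\ref{terms-in-fk.lem} one shows inductively that the iterate $\phi^{k+1}=(\bu\bv)^{k+1}\in F$, after right-composition with a suitable element of $\prod_{j\le k}U_{-1,3j-1}$ already known to lie in $\F$, can be conjugated by the torus element $(t^{-3k-2}x,t^{-1}y)$ so that the limit as $t\to 0$ is a nontrivial element of $U_{-1,3k+2}$. Since $\F$ is closed and $T$-stable, this limit lies in $\F$, and hence $U_{-1,3k+2}\subset\F$. This is a genuinely geometric argument producing actual group elements of $\F$, and it cannot be replaced by the Lie-algebra bookkeeping you propose.
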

\begin{proof}
We have seen in Section~\ref{Jonquiere.subsec} that $\J_0 \simeq \kk[y]^+$ admits a filtration by the closed unipotent subgroups $\prod_{i=2}^d U_{-1,i}$, $d\geq 2$. 
It follows that $\J_0^S = \{(x + y^2 f(y^3), y) \mid f \in \kk[y]\} \simto \kk[y]^+$, and that $\J_0^S$ admits a filtration by the closed unipotent subgroups $\prod_{j=1}^k U_{-1,3j-1}$. 
Note that this product contains any automorphism of $\Atwo$ of the form $(x + \sum_{j=1}^{k} c_{j}y^{3j-1},y)$.
We now show by induction that $U_{-1,3j-1} \subset \F$.

Assume that this holds for $j\leq k$. Set
\begin{multline*}
\phi:=\bu\circ \bv =
(x+y^{2},y) \circ (x,y+x^{2}) = 
\\
= (x+y^{2} + 2x^2 y + x^4, y + x^{2})\in F=\langle \bu,\bv\rangle
\end{multline*}
and consider the product 
$$
\phi\circ (x -\sum_{j=1}^k c_j y^{3j-1}, y) =: (f_k,h_k) \in F \cdot\prod_{j=1}^kU_{-1,3j-1}.
$$
Then
$$
\textstyle f_k(0,y) = - \sum_{1}^k c_j y^{3j-1} + y^2 + 2  y(\sum_{1}^k c_j y^{3j-1})^2 +  
(\sum_{1}^k c_j y^{3j-1})^4.
$$
In particular, the exponents of $y$ in $f_k(0,y)$ are all $\equiv 2\mod 3$.
Setting $c_1=1$ the second power of $y$ disappears and we get
$$
\textstyle f_k(0,y) = - \sum_{2}^k c_j y^{3j-1} + 2  y(\sum_{1}^k c_j y^{3j-1})^2 +  
(\sum_{1}^k c_j y^{3j-1})^4.
$$
Hence, for $j\leq k$, the coefficient $d_j$ of $y^{3j-1}$ has the form
$$
d_j = 
-c_j + 2 \sum_{j_1+j_2=j} c_{j_1} c_{j_2} + \sum_{j_1+j_2+j_3+j_4=j+1} c_{j_1}c_{j_2}c_{j_3}c_{j_4}.
$$
It follows that for any $j\leq k$ the system of equations $d_1=d_2=\cdots=d_j=0$ admits a unique solution
 in positive integers $c_i$  which does not depend on $k$ ($c_2 = 2, c_3=9, c_4=52,c_5=340, c_6 = 2394, \ldots$).
Moreover, we see that the coefficient $d_{k+1}$ 
of $y^{3k+2}$ is positive. But then
\[
(t^{-3k-2}x, t^{-1}y)\circ (f_k,h_k) \circ (t^{3k+2}x,t y) \overset{t \to 0}{\longrightarrow}
(x + d_{k+1}y^{3k+2}, y) \in U_{-1,3k+2}.
\]
Since $\F$ is closed and stable under the action of the torus $T$ we get $U_{-1,3k+2} \subset \F$, and the claim follows. 
\end{proof}
Summing up we have the following result. 
\begin{thm}\label{example.thm}
Let $F \subset \Aut(\Atwo)$ be the subgroup generated by $\bu:=(x+y^2,y)$ and $\bv:=(x,y+x^2)$, and denote by $\F:=\overline{F}$ 
its closure in $\Aut(\Atwo)$. Furthermore, define $\A:=\{\bg \in \Aut(\Atwo) \mid \bg(0) = 0, d\bg_0 = \id\}$.
\be
\item 
$F$ is a free group in the two generators $\bu,\bv$, containing elements which are not locally finite.
\item
$\F = \A^S = \Cent_\A(S)$ where $S:=\{(\zeta x, \zeta^2 y)\mid \zeta^3=1\} \subset T$ is the centralizer of $F$ in $\Aut(\Atwo)$.
\item
$\F$ is a free product $\J * \J^-$ where $\J := \{(x+y^2 f(y^3), y) \mid f\in\kk[y]\} = (\J_0)^S$ and 
$\J^- = \{(x, y + x^2 f(x^3)) \mid f\in\kk[x]\}=\tau\J\tau$. In particular, $\J$ is a commutative nested unipotent closed ind-subgroup of $\Aut(\Atwo)$ isomorphic to $\kk[y]^+$.
\item
Every algebraic subgroup of $\F$ is commutative and unipotent and is conjugate to a subgroup of $\J$ 
or of $\J^-$.
\ee
\end{thm}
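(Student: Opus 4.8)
The plan is to read off all four assertions from the material already assembled; the only genuinely new work is in part~(4). Throughout I keep the notation of Lemma~\ref{example.lem}, Lemma~\ref{free-product.lem} and Proposition~\ref{main.prop}.

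\textbf{Parts (2) and (3).} These are repackaging. By definition $\A^S = \Cent_\A(S)$, and $S = \Cent_{\Aut(\Atwo)}(F)$ is Lemma~\ref{example.lem}, while the equalities $\F = \A^S = (\J_0)^S * (\J_0^-)^S$ are Proposition~\ref{main.prop}. It remains to identify the two factors. Computing the conjugation action of $s = (\zeta x,\zeta^2 y)$, $\zeta^3=1$, on an element $(x+yf(y),y)\in\J_0$ gives $s\,(x+yf(y),y)\,s^{-1} = (x + \zeta^2 y f(\zeta y),\,y)$, which equals $(x+yf(y),y)$ for all cube roots of unity precisely when $yf(y)\in y^2\kk[y^3]$; hence $(\J_0)^S = \{(x+y^2f(y^3),y)\mid f\in\kk[y]\}=\J$. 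Since $\tau s\tau = s^{-1}$, the element $\tau$ normalizes $S$ and conjugates $\J_0$ onto $\J_0^-$, so $(\J_0^-)^S = \tau(\J_0)^S\tau = \tau\J\tau = \{(x,y+x^2f(x^3))\mid f\in\kk[x]\}=\J^-$. Finally $\J$ is closed (it is the $S$-fixed locus inside the closed subgroup $\J_0$), commutative and unipotent as is $\J_0$, isomorphic to $\kk[y]^+$ via $f\mapsto(x+y^2f(y^3),y)$, and nested by the closed algebraic subgroups $\prod_{j=1}^{k}U_{-1,3j-1}\simeq\Ga^{k}$. As $\J\simeq\J^-\simeq\kk[y]^+$ are torsion free, so is the free product $\F=\J*\J^-$.

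\textbf{Part (1).} With the free product in hand, $\bu=(x+y^2\cdot 1,y)\in\J$ and $\bv=(x,y+x^2\cdot 1)\in\J^-$, both of infinite order since $\bu^n=(x+ny^2,y)$. In any free product the subgroup generated by an infinite-order element of each factor is free of rank two on those two elements, because every alternating word in them is already in reduced form; hence $F=\langle\bu,\bv\rangle$ is free on $\bu,\bv$. For the existence of a non-algebraic element, put $\phi:=\bu\bv$ and write $\phi^{k}=(f_k,h_k)$. By Lemma~\ref{terms-in-fk.lem} the degree of $f_k$, hence of $\phi^{k}$, is at least $3k-1$; were $\phi$ algebraic, $\overline{\langle\phi\rangle}$ would be an algebraic subgroup of $\Aut(\Atwo)$ and so contained in one of the finite-dimensional pieces of the ind-group $\Aut(\Atwo)$, i.e.\ of bounded degree, contradicting $\deg\phi^{k}\to\infty$.

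\textbf{Part (4).} Let $G\subset\F$ be an algebraic subgroup. Since $\F$ is torsion free, so is $G$. For $g\in G$ the closed commutative subgroup $\overline{\langle g\rangle}\subset G$ has the form $\Gm^{p}\times\Ga^{q}$ times a finite cyclic group; torsion-freeness forces $p=0$ and the finite factor trivial, so $\overline{\langle g\rangle}$ is connected and $g\in G^{\circ}$. Thus $G=G^{\circ}$ is connected; it has trivial maximal torus (a positive-dimensional torus has torsion), hence is unipotent, in particular nilpotent, and therefore contains no nonabelian free subgroup. Consequently $G$ is freely indecomposable: a nontrivial free product $C*D$ contains a nonabelian free subgroup unless $|C|=|D|=2$, in which case it is infinite dihedral and has torsion. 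Moreover, if $G\neq\{e\}$ it is divisible (a nontrivial connected unipotent group in characteristic zero), hence not infinite cyclic, and not a nonabelian free group. Applying the Kurosh subgroup theorem to $\F=\J*\J^-$, the subgroup $G$ is a free product of a free group with conjugates (in $\F$) of subgroups of $\J$ and of $\J^-$; as $G$ is freely indecomposable and neither trivial, infinite cyclic, nor nonabelian free, this decomposition collapses to a single such factor, so $G=gHg^{-1}$ for some $g\in\F$ and some subgroup $H\subset\J$ or $H\subset\J^-$. In particular $G$ is conjugate to a subgroup of $\J$ or of $\J^-$, and therefore commutative and unipotent.

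\textbf{Where the difficulty sits.} The substance is entirely in part~(4): one must forbid an algebraic subgroup from ``straddling'' the free-product decomposition, and the tool for this is the Kurosh theorem (equivalently, the action of $\F$ on the Bass--Serre tree of $\J*\J^-$: $G$ has no hyperbolic element, since a hyperbolic isometry of that tree admits no compatible system of $n$-th roots whereas a nontrivial element of a connected unipotent group does, and a group acting on a tree with all elements elliptic fixes a vertex or an end, the latter forcing a fixed vertex once the Busemann homomorphism is seen to vanish). The two supporting facts --- that $\F$ is torsion free, and that torsion-free algebraic groups in characteristic zero are connected and unipotent --- are elementary but indispensable. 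Parts~(1)--(3) are bookkeeping on top of Lemmas~\ref{example.lem} and~\ref{free-product.lem} and Proposition~\ref{main.prop}.
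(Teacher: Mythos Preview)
Your proof is correct. Parts (1)--(3) match the paper's treatment, with welcome extra detail: the explicit computation of $(\J_0)^S=\J$, and the derivation of freeness of $F$ from the ambient free product structure rather than leaving it as ``clear''.

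For part (4) you take a genuinely different route. The paper observes that an algebraic subgroup $G\subset\F$, being contained in some $\Aut(\Atwo)_d$, consists of automorphisms of bounded degree and hence of bounded \emph{length} in $\J*\J^-$; then \name{Serre}'s theorem (\cite[Theorem~8]{Se2003Trees}) on subgroups of bounded length in a free product immediately yields a conjugate inside one factor. Your argument instead extracts structural constraints from algebraicity itself: torsion-freeness of $\F$ forces $G$ to be connected unipotent, hence nilpotent and divisible, and these properties make the Kurosh decomposition of $G$ collapse to a single conjugate factor. Both approaches are valid. The paper's is shorter because boundedness comes for free from the ind-structure, while yours makes the algebraic-group input more transparent and establishes ``unipotent'' directly rather than as a consequence of landing in $\J$ or $\J^-$. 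One caution: your parenthetical Bass--Serre sketch (all elements elliptic $\Rightarrow$ global fixed vertex or end) is delicate for non-finitely-generated groups; the paper's boundedness argument avoids this, and your main Kurosh argument does not rely on it.
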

\begin{proof}
(1) 
The first part is clear. For the second we
notice that $\deg(\bu\bv)^k$ tends to infinity for $k\to\infty$. Hence, $\bu\bv \in \F$ is not locally finite.
\ps
(2) \& (3)
This is Proposition~\ref{main.prop} and Lemma~\ref{example.lem}.
\ps
(4)
For an algebraic subset $X \subset \F$ the degrees of the elements from $X$ are bounded above. It follows from Lemma~\ref{degree.lem} that the elements of $X$ have bounded length in $\F$. Hence  
a famous result of \name{Serre}'s (see \cite[Theorem~8]{Se2003Trees}) implies that
an algebraic subgroup of the free product $\F = \J * \J^-$ is
conjugate to a subgroup of one of the factors.
\end{proof}
\begin{rems}\label{example.rems}
\be
\item
According to Lemma 3.2.1 the subgroup $\F \subset \Aut(X)$ is the closure of the subgroup generated by $U$ and $V$, 
and $\Lie\F$ is the Lie algebra generated by $\Lie U$ and $\Lie V$. Thus our example is a positive instance of our Question~\ref{Q1}.

\item
The free product $U*V$ is strictly contained in $\F$, because, by Lemma~\ref{degree.lem}, the degree of an element from $U*V$ is a power of 2, and so $U*V$ cannot contain all the groups $U_{-1,3j-1}$. Even more, by the same lemma, every algebraic subgroup of $U*V$ is of bounded length, hence conjugate to either $U$ or $V$ by \name{Serre}'s result \cite[Theorem~8]{Se2003Trees}).

\item\label{not-universal.item}
The theorem above shows that there is an ind-structure on the free product of the two ind-groups $\J$ 
and $\J^-$. However, this structure {\it is not universal\/}, i.e. it is not true that 
given two homomorphisms of ind-groups $\J \to \G$ and $\J^- \to \G$, then the induced homomorphism $\J*\J^- \to \G$ is a homomorphism of ind-groups.

In order to see this consider the two projections $\J \to U \subset \Aut(\Atwo)$ and $\J^- \to V \subset \Aut(\Atwo)$. We claim that the induced homomorphism $\phi\colon \J*\J^- \to \Aut(\Atwo)$ is not an ind-homomorphism. In fact, the image of $\phi$ is $U*V \subset \J*\J^-$ and $\phi$ induces the identity on $U*V$. Since $U*V$ is dense in $\J*\J^-$ it follows that $\phi$ is the identity on $\J*\J^-$, a contradiction.

\ee
\end{rems}

\ps
%
\section{A canonical Lie algebra for an ind-group}\label{canonical-LA.sec}
For an ind-variety $\V$ the tangent space $T_x\V$ in a point $x \in \V$ can be defined as the limit
$\varinjlim T_x X$ where $X$ runs through all algebraic subsets of $\V$ containing $x$. There is   a canonical subspace of $T_x\V$ generated by those $T_xX$ where $X$ is smooth in $x$:
$$
T^{(s)}_x \V :=\Span(T_xX \mid X \subset \V \text{ algebraic and }x \in X_\reg).
$$
If $\V$ is {\it strongly smooth in $x$}, i.e. there is an admissible filtration $\V = \bigcup_k \V_k$ such that each $\V_k$ is smooth in $x$, then $T^{(s)}_x\V = T_x \V$. 
In general, we could not prove any useful results for this subspace, but for ind-groups it turns out that it has very remarkable properties.
\ps
\subsection{Definition and first properties}
\begin{defn} For an ind-group $\G$ we define
$$
L_\G : = \Span(T_eX \mid X \subset \G \text{ algebraic and }e \in X_\reg) \subset \Lie\G.
$$
\end{defn}
The next proposition shows that $L_\G \subset \Lie \G$ is a canonical Lie subalgebra.
\begin{prop}\label{LG.prop}\strut
\be
\item[{\rm (a)}]
$L_{\G} \subset \Lie \G$ is stable under the adjoint action of $\G$, i.e. $L_{\G}$ is
an ideal in $\Lie\G$.
\item[{\rm (b)}]
For every algebraic subgroup $G\subset \G$ we have $\Lie G \subset L_{\G}$.
\item[{\rm (c)}]
$L_{\G^\circ} = L_\G$.
\item[{\rm (d)}]
$L_\G$ is finite dimensional if and only if $\G^\circ$ is an algebraic group, and in this case we have $L_\G = \Lie \G$.
\ee
\end{prop}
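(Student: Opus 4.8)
The plan is to prove the three assertions in order, reducing everything to the manipulations with the spaces $M_Y$ already recorded in Lemma~\ref{MYphi.lem}. For (a), fix $g \in \G$ and an irreducible closed algebraic subset $Y \subset \G$. I would like to say that conjugation by $g$ carries $Y_{\reg}$ to $(gYg^{-1})_{\reg}$ and that the latter is again an irreducible closed algebraic subset; the conjugation map $\Int(g) = \lambda_g \circ \rho_{g^{-1}}$ is an automorphism of ind-varieties, so it sends $Y$ to another irreducible closed algebraic subset $Y' := gYg^{-1}$ and sends $Y_{\reg}$ onto $Y'_{\reg}$. Applying Lemma~\ref{MYphi.lem}(b) twice (first $M_{Yg^{-1}} = M_Y$, then $M_{gY'} = \Ad g\,(M_{Y'})$ with $Y' = Yg^{-1}$), or more directly computing $M_{gYg^{-1}} = \Ad g\,(M_Y)$ from part (b), gives $\Ad g\,(M_{Y_{\reg}}) = M_{(gYg^{-1})_{\reg}}$. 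Since $gYg^{-1}$ ranges over all irreducible closed algebraic subsets as $Y$ does, taking the span over all $Y$ shows $\Ad g\,(L_\G) = L_\G$. As $\G$ is path-connected-generated in the relevant sense — more precisely, since $L_\G$ is $\Ad\G$-stable and $\Ad$ is the differential of conjugation — it follows that $L_\G$ is stable under $\ad(\Lie\G)$, i.e.\ $L_\G$ is an ideal; here I would invoke the general fact (as in \cite[Proposition~6.3.4]{FuKr2018On-the-geometry-of}, used already in the proof of Theorem~\ref{main1.thm}(1)) that a subspace stable under $\Ad\G$ is stable under $\ad\Lie\G$.

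For (b), let $G \subset \G$ be an algebraic subgroup. Then $G$ is in particular an irreducible (if connected) closed algebraic subset of $\G$, and it is smooth, so $G_{\reg} = G$. For $g \in G$ the right translation $\rho_{g^{-1}}$ maps $G$ isomorphically onto $Gg^{-1} = G$, and its differential at $g$ identifies $T_gG$ with $T_eG = \Lie G$; hence already $T_eGg^{-1} = \Lie G$ for each $g$, so $M_G = \Lie G$. Therefore $\Lie G = M_G = M_{G_{\reg}} \subset L_\G$ by the very definition of $L_\G$. If $G$ is not connected one applies this to $G^\circ$ and notes $\Lie G = \Lie G^\circ$.

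For (c), let $\H \subset \G$ be a closed subgroup. Every irreducible closed algebraic subset $Y \subset \H$ is also an irreducible closed algebraic subset of $\G$ (closedness is with respect to the ind-topology, and $\H$ is closed in $\G$, so a set closed in $\H$ and contained in some $\H_k \subset \G_m$ is closed in $\G$). The smooth locus $Y_{\reg}$ is intrinsic to $Y$, so it is the same whether computed inside $\H$ or inside $\G$; and the space $M_{Y_{\reg}}$ is built from differentials of right translations on $Y$, which agree under the inclusion $T_e\H \hookrightarrow T_e\G$. Thus $M_{Y_{\reg}}^{(\H)}$ maps into $M_{Y_{\reg}}^{(\G)}$ under $d(\iota)\colon \Lie\H \hookrightarrow \Lie\G$, and taking spans over all such $Y$ gives $L_\H \subset L_\G$ inside $\Lie\G$. (One should remark that a priori $L_\H$ could be strictly smaller than $L_\G \cap \Lie\H$, since $\G$ may contain irreducible algebraic subsets not contained in $\H$; the claim is only the inclusion.)

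The routine points above are genuinely routine given Lemma~\ref{MYphi.lem}; the one place deserving care, and the main obstacle, is the passage in (a) from $\Ad\G$-stability to $\ad\Lie\G$-stability, i.e.\ the assertion that $L_\G$ being stable under all $\Ad g$ forces it to be an ideal in $\Lie\G$. In the finite-dimensional setting this is automatic for connected $G$, but for an ind-group one must either differentiate the conjugation action along curves through $e$ landing in a given algebraic subgroup (using that $L_\G$ is stable under each algebraic $\Ad G|_{G}$, hence under $\ad\Lie G$, and that $\Lie\G$ is exhausted by such $\Lie G$ by part (b) — this is a pleasant circularity that actually closes, since (b) is proved independently), or quote the precise statement from \cite[Proposition~6.3.4]{FuKr2018On-the-geometry-of}. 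I would take the latter route, remarking that $L_\G$ is a union of finite-dimensional $\Ad G$-stable subspaces as $G$ runs over algebraic subgroups of $\G$, so differentiating is legitimate.
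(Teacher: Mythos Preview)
Your argument is correct and follows the paper's approach exactly: for (a) you compute $\Ad g\,(M_{Y_{\reg}}) = M_{(gYg^{-1})_{\reg}}$ just as the paper computes $\Ad h(M_Y) = M_{hYh^{-1}}$, for (b) you observe $M_G = \Lie G$ via right translation, and for (c) both you and the paper note the inclusion is immediate. The one difference is that you treat the passage from $\Ad\G$-stability to ``ideal'' as a genuine obstacle requiring justification, whereas the paper's phrasing ``stable under the adjoint action of $\G$, \emph{i.e.}\ $L_\G$ is an ideal'' indicates that $\Ad\G$-stability is being taken as the content of the claim, with the word ``ideal'' a restatement rather than an additional conclusion; so your extra paragraph on $\Ad$ versus $\ad$, while not wrong, is addressing a point the paper does not regard as part of the proposition.
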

\begin{proof}
(a) If $X \subset \G$ is an algebraic subset with $e \in X_\reg$ and if $g \in \G$, then $X':=g X g^{-1}$ is 
an algebraic subset and $e \in X'_\reg$. Moreover, $\Ad g (T_e X) = T_e X' \subset L_\G$, and the claim follows. 
\ps
(b) and (c) are clear.
\ps
(d) If $\G^\circ$ is an algebraic group, then the claims follow from (b) and (c). If $L_\G$ is finite dimensional, then so is $L_{\G^\circ}$ by (c). 
As a consequence, the dimension of an irreducible subvariety $X \subset \G^\circ$ is bounded by $\dim L_{\G^\circ}=\dim L_\G$, 
and the claim follows from Proposition~\ref{Ramanujam.prop}.
\end{proof}

The following results show that one has $L_\G = \Lie\G$ in many cases, but we have no example where $L_\G \neq \Lie \G$.
\begin{cor}\label{LG.cor}
\be
\item 
Let $\G$ be algebraically generated by a family $\{G_i\}_{i\in I}$ 
of connected algebraic subgroups. Then $L(\G)\subset L_{\G}$ is an ideal. 

\item 
If $\G$ is strongly smooth in $e$, then $L_{\G}=\Lie(G)$. 
This holds, in particular, for a nested ind-group $\G$. 
\ee
\end{cor}

Recall that
\[
\SAut(\AA^n):=\{\alpha\in\Aut(\AA^n)\,|\,{\rm jac}(\alpha)=1\}
\]
is a normal subgroup of $\Aut(\AA^n)$ with Lie algebra $\Lie \SAut(\AA^n) = \VF^0(\AA^n)$, 
the algebra of polynomial vector fields on $\AA^n$ with zero divergence (see Section~\ref{Lie-algebra.subsec}).
\begin{cor}\label{AutAn.cor}
We have $L_{\SAut(\An)} = \Lie\SAut(\An)$ and $L_{\Aut(\An)} = \Lie\Aut(\An)$.
\end{cor}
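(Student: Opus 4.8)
The plan is to prove both equalities by showing that $L_{\G}$ contains a Lie subalgebra which is already known to be all of $\Lie\G$. For $\G = \SAut(\An)$, recall that $\Lie\SAut(\An) = \VF^0(\An)$ is generated as a Lie algebra by locally nilpotent vector fields: indeed, $\VF^0(\An)$ is spanned by the "elementary" divergence-free fields, each of which (up to coordinate choice) has the form $p(x_1,\dots,\widehat{x_i},\dots,x_n)\,\partial/\partial x_i$ with the variable $x_i$ absent from $p$, and such a field is locally nilpotent, hence lies in $\Lie U$ for a one-dimensional unipotent subgroup $U \simeq \Ga$ of $\SAut(\An)$. By Proposition~\ref{LG.prop}(b), $\Lie U \subset L_{\SAut(\An)}$ for each such $U$, and by Proposition~\ref{LG.prop}(a), $L_{\SAut(\An)}$ is a Lie subalgebra (in fact an ideal) of $\Lie\SAut(\An)$. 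Since these locally nilpotent fields generate $\VF^0(\An) = \Lie\SAut(\An)$ as a Lie algebra, we conclude $L_{\SAut(\An)} = \Lie\SAut(\An)$.

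First I would make the generation statement for $\VF^0(\An)$ precise: a divergence-zero polynomial vector field is a $\kk$-linear combination of fields of the form $\partial_i(p)\,\partial_j - \partial_j(p)\,\partial_i$ for monomials $p$ (these span, since $H^1$ of the polynomial de Rham complex vanishes on $\An$), and each such "rotational" field can be written as a bracket or sum of brackets of triangular locally nilpotent fields — or, even more directly, one invokes the known fact that $\SAut(\An)$ is generated by its one-parameter unipotent subgroups (the "elementary" automorphisms $x_i \mapsto x_i + p$ with $p$ independent of $x_i$), whose Lie algebras exhaust a generating set of $\VF^0(\An)$. Either route is standard; I would cite the structure of $\VF^0(\An)$ rather than grind through the combinatorics.

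For the second equality, the inclusion $L_{\Aut(\An)} \subset \Lie\Aut(\An)$ is automatic. For the reverse, note $\Lie\Aut(\An) = \VF^c(\An)$, the vector fields of constant divergence, so $\Lie\Aut(\An) = \Lie\SAut(\An) \oplus \kk\,\eps$ where $\eps$ is any fixed field of divergence $1$ — for instance the Euler-type field $x_1\,\partial/\partial x_1$, which is locally finite (indeed semisimple) and lies in $\Lie\Gm$ for a one-dimensional torus $\Gm \subset \GL_n \subset \Aut(\An)$. By Proposition~\ref{LG.prop}(b), $\Lie\Gm \subset L_{\Aut(\An)}$, so $\eps \in L_{\Aut(\An)}$; combined with $\Lie\SAut(\An) = L_{\SAut(\An)} \subset L_{\Aut(\An)}$ (using Proposition~\ref{LG.prop}(c) applied to the closed subgroup $\SAut(\An) \subset \Aut(\An)$), we get $\Lie\Aut(\An) \subset L_{\Aut(\An)}$, hence equality.

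The only real obstacle is the Lie-generation claim for $\VF^0(\An)$, i.e. that the divergence-free polynomial vector fields are generated as a Lie algebra by locally nilpotent ones — everything else is a formal consequence of Proposition~\ref{LG.prop}. I expect this to be handled by citing the explicit generators of $\SAut(\An)$ (elementary unipotent automorphisms) together with Lemma~\ref{locallyfiniteVF.lem}, which identifies their infinitesimal generators as locally nilpotent vector fields spanning a Lie-generating set; no delicate estimate is needed.
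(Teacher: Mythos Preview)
Your argument is correct, but it follows a different path from the paper's. The paper invokes \name{Shafarevich}'s result that $\Lie\SAut(\An)=[\Lie\Aut(\An),\Lie\Aut(\An)]$ is a \emph{simple} Lie algebra; since $L_{\SAut(\An)}$ is a nonzero ideal (Proposition~\ref{LG.prop}(a) and~(b)), it must equal $\Lie\SAut(\An)$. Then $\Lie\Aut(\An)=\Lie\SAut(\An)\oplus\Lie\kk^{*}$ (scalar multiplications) finishes the $\Aut(\An)$ case exactly as you do. Your route instead establishes directly that $\VF^{0}(\An)$ is Lie-generated by triangular locally nilpotent derivations, each lying in $L_{\SAut(\An)}$ by Proposition~\ref{LG.prop}(b); closure of $L_{\SAut(\An)}$ under brackets then gives the result. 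The paper's approach is shorter (a single citation replaces your generation computation), while yours is more explicit and avoids the simplicity theorem --- indeed, the bracket identity $[q\,\partial_{i},\,x_{i}^{a}\,\partial_{j}]=\partial_{i}(x_{i}^{a}q)\,\partial_{j}-\partial_{j}(x_{i}^{a}q)\,\partial_{i}$ for $q$ independent of $x_{i},x_{j}$ already yields every ``rotational'' field as a single bracket of two triangular LNDs, so the combinatorics you defer is in fact a one-line check.

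One wording issue to fix: your phrase ``$\VF^{0}(\An)$ is spanned by the elementary divergence-free fields of the form $p(x_{1},\dots,\widehat{x_{i}},\dots,x_{n})\,\partial/\partial x_{i}$'' is false as a \emph{linear} span (e.g.\ $x\,\partial_{x}-y\,\partial_{y}\in\VF^{0}(\Atwo)$ is not a $\kk$-linear combination of such fields). You mean Lie-generated, which is what you use; just say that.
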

Note that $\Aut(\Atwo)$ is not strongly smooth in $e$ (see \cite[Corollary~14.1.2]{FuKr2018On-the-geometry-of}) and so we cannot apply Corollary~\ref{LG.cor}(2).
\begin{proof} By  \cite[Lemma~3]{Sh1981On-some-infinite-d}, the Lie algebra 
\[
\Lie\SAut(\An)=[\Lie\Aut(\An),\Lie\Aut(\An)]
\] 
is simple, and by  Proposition~\ref{LG.prop}(a) $L_{\SAut(\An)}$ is a (non-zero) ideal in $\Lie\SAut(\An)$. 
This proves the first claim.
\ps
The second claim is a consequence of Proposition~\ref{LG.prop}(b) due to the fact  that
\[
\Lie \Aut(\An) = \Lie\SAut(\An) \oplus \Lie\kk^*,
\] 
where $\kk^*\subset\Aut(\An)$ 
is realized as the subgroup of  the scalar multiplications $\lambda\cdot\id$, 
$\lambda\in\kk^*$, cf.~\cite[Prop.~15.7.2]{FuKr2018On-the-geometry-of}.
\end{proof}
\ps
\subsection{Functorial properties}
\begin{prop}\label{functorial.prop}
Let $\phi\colon \H \to \G$ be a homomorphism of ind-groups. 
\be
\item
Then $d\phi_e(L_\H) \subset L_\G$. In particular, $L_\G$ is invariant under every ind-group automorphism $\G \simto \G$.
\item
Assume that $\kk$ is uncountable. If $\phi$ is surjective, then the differential $d\phi_e\colon L_\H \to L_\G$ is surjective.
\ee
\end{prop}
\begin{proof}
(a) Let $Y \subset \H$ be an irreducible algebraic subset such that $e \in Y_\reg$. 
There is an open dense subset $U \subset Y_\reg$ such that $\phi(U) \subset \overline{\phi(Y)}_\reg$ 
and that $\phi|_U\colon U \to \phi(U)$ is smooth in every point $u \in U$. In particular, for all $u \in U$, the image  $\phi(Y\!u^{-1})$ is smooth in $e$, hence the image of $T_e \,Y\!u^{-1}$ is contained in $L_\G$ for all $u \in U$. Now the claim follows from Lemma~\ref{dense-subset.lem} below, because $U$ is dense in $Y_\reg$.
\ps
(b) Let $X \subset \G$ be an irreducible algebraic subset such that $e \in X_\reg$. We have to show that $T_e X \subset d\phi_e(L_\H)$. 
Since $\kk$ is uncountable there exists an irreducible closed algebraic subset $Y \subset \H$ such that $\phi(Y) = X$ (\cite[Prop.~1.3.2]{FuKr2018On-the-geometry-of}). As in (a) we choose a dense open $U \subset Y$ such that $\phi(U)$ is an open subset of $X_\reg$ and that $\phi|_U \colon U \to \phi(U)$ is smooth. 
Then $d\phi_e (T_e \,Y\!u^{-1}) = T_e\,X\phi(u)^{-1}$ for all $u \in U$. It follows that $T_e\,Xv^{-1} \subset d\phi_e (L_\H)$ for all $v \in \phi(U)$, and hence for all $v \in X_\reg$, again by Lemma~\ref{dense-subset.lem} below.
\end{proof}
\begin{rem}
The assumption that $\kk$ is uncountable is necessary, as seen from the following example. Take the bijective homomorphism $\kk^+ \to \Ga$ where $\kk^+$ is endowed with the discrete topology.
\end{rem}
\begin{lem}\label{dense-subset.lem}
Let $Y \subset \H$ be an irreducible algebraic subset. Then 
$\Span(T_e \,Y\!g^{-1} \mid g\in Y) \subset \Lie\H$ is finite dimensional, and 
$$
\Span(T_e \,Y\!g^{-1} \mid g\in Y_\reg) = \Span(T_e \,Y\!g^{-1} \mid g\in U)
$$
for any dense subset $U \subset Y_\reg$.
\end{lem}
\begin{proof}
Set $X:=\overline{YY^{-1}} \subset \G$. Then
$T_e \,Y\!g^{-1} \subset T_e X$ for all $g \in Y$ which implies the first claim.
\ps
For the second claim let $\Grass_n T_eX$ denote the Grassmannian of $n$-dimensional subspaces of $T_eX$ where $n := \dim Y$, and consider the morphism $\mu\colon Y_\reg \to \Grass_n T_eX$ given by $g \mapsto T_e \, Y\!g^{-1}$. Since $\Grass_n V \subset \Grass_n T_e X$ is closed for any subspace $V \subset T_eX$ it follows that a dense subset of $Y_\reg$ has the same span as $Y_\reg$.
\end{proof}
\begin{que}\label{question.surj}
Assume we have a surjective homomorphism $\G \to G$ where $\G$ is a path-connected ind-group,
$G$ an algebraic group and $\kk$ uncountable.
Does $\G$ contain an algebraic
subgroup which is sent surjectively  onto $G$?
\end{que}
\ps
\subsection{Subgroups of finite codimension}
Let $\G=\bigcup_d \G_d$ be an ind-group.
\begin{defn} 
We say that a closed subgroup $\H \subset \G$ is  {\it of finite codimension\/} if
$\G_d \cdot \H = \G$ for some $d\in\NN$. 
\end{defn}
For instance, if $\G$ is connected and acts on an algebraic variety $X$, then the stabilizer
$\G_x$ of any point $x\in X$ is of finite codimension. Indeed, by \cite[Prop.~7.1.2]{FuKr2018On-the-geometry-of} the orbit $\G x$ is an irreducible subvariety of $X$  and $\G x = \G_{d}x$
for some $d$. The latter implies $\G = \G_{d}\cdot \G_{x}$.

We do not know if every subgroup $\H$ of finite codimension of a connected ind-group $\G$
is the stabilizer of a point of a $\G$-variety, i.e. if $\G/\H$ has a canonical structure
of an algebraic variety. 

\begin{exa}
The connected component 
$\G^\circ$ has finite codimension in $\G$  if and only if it has finite index in $\G$. Indeed, a closed algebraic subset $Z \subset \G$ 
can only meet finitely many of the components of $\G$. Similarly, if $\H$ has finite codimension in $\G$, then the image of $\H$ in $\G/\G^\circ$ has finite index.
\end{exa}
One could expect that for a subgroup $\H \subset \G$ of finite codimension the Lie algebra $\Lie\H \subset \Lie \G$ 
has finite codimension as well. What we can show is the following result.

\begin{prop}\label{finite-codimension.prop} Assume that the base field $\kk$ is uncountable.
Let $\G$ be an  ind-group and $\H \subset \G$ a closed subgroup of finite codimension. 
If $\H$ is subnormal in $\G$, i.e. there is a finite series $\H=\H_0 \subset \H_1\subset \cdots\subset \H_\ell = \G$ 
such that $\H_i$ is normal in $\H_{i+1}$, then $L_{\H}$ has finite codimension in $L_{\G}$. 
\end{prop}

The proof needs some preparation. For an algebraic subset $X \subset \G$ we define
$$
M_X:=\Span(T_e \,X\!g^{-1} \mid g \in X) \subset \Lie\G.
$$
We have seen in Lemma~\ref{dense-subset.lem} that $M_X$ is finite dimensional. Moreover, if $X$ is smooth, 
then $M_U = M_X$ for an open dense subset $U$ of $X$, and $M_X \subset L_\G$.

\begin{lem}\label{finite-codimension.lem} 
Assume that $\kk$ is uncountable.
Let $\G$ be an ind-group and $\H \subset \G$ a closed normal subgroup. If there exists a
closed irreducible algebraic subset $Z \subset \G$ such that $Z\cdot \H = \G$,
then $L_{\G} \subset M_{Z} + L_{\H}$. In particular, $L_{\H}$ has finite codimension in $L_{\G}$.
\end{lem}
\begin{proof}

(1)
Let $X \subset \G$ be a closed irreducible algebraic subset. Since $\kk$ is uncountable we can find an irreducible algebraic subset 
$Y \subset \H$ such that $Z\cdot Y \supset X$. Thus $X$ is contained in the image of the multiplication morphism 
$\mu\colon Z \times Y \to \G$, $\mu(z,y) := z\cdot y$. For every $g:= z \cdot y$
where $(z,y)\in Z \times Y$ we have the following commutative diagram
\[
\begin{CD}
Z \times Y @>\mu>> \G\\
@V{\rho_{z^{-1}}\times \Int z \, \circ \, \rho_{y^{-1}}}V{\simeq}V  @V{\simeq}V{\rho_{g^{-1}}}V\\
Zz^{-1}\times z(Y y^{-1})z^{-1}  @>\mu>> \G
\end{CD}
\]
which induces the commutative diagram of tangent maps
\[
\begin{CD}
T_{z}Z \oplus T_{y}Y @>{d\mu_{(z,y)}}>> T_{g}\G\\
@V{d\rho_{z^{-1}}\times \Ad z \, \circ \, d\rho_{y^{-1}}}V{\simeq}V  @V{\simeq}V{d\rho_{g^{-1}}}V\\
T_{e}(Zz^{-1})\oplus T_{e}(z(Y y^{-1})z^{-1})  @>d\mu_{(e,e)}>> T_{e}\G
\end{CD}
\]
(2) There exists an irreducible open subset $U \subset \mu^{-1}(X)_\reg$ such that $\mu(U) \subset X_\reg$ 
is open and dense, and that $\mu|_U \colon U \to X$ is smooth. Replacing $Y$ by $\overline{\pr_Y(U)}$ if necessary 
we can assume that $\pr_Y(U) \subset Y_\reg$. For $g = z\cdot y$ where $(z,y) \in U$ 
it follows from the diagram above that 
$$
T_e \,X\!g^{-1} \subset T_e \, Zz^{-1} + \Ad z(T_e \, Y\!y^{-1}) \subset M_Z + \Ad z(L_\H),
$$
hence $M_{X_\reg} \subset M_Z + L_\H$ by Lemma~\ref{dense-subset.lem}. Since the $M_{X_\reg} \subset L_\G$ generate $L_\G$ the claim follows.
\end{proof}

\begin{proof}[Proof of Proposition~\ref{finite-codimension.prop}]
We can clearly assume that $\H$ is normal in $\G$. 

(1) We first claim that $\H \cap \G^\circ$ has finite codimension in $\G^\circ$. The ind-group $\G$ is a countable disjoint union 
$\G = \bigcup_{j\in J} g_j \G^\circ=\bigcup_{j\in J} \G^\circ g_j$ where the components are open, closed, and connected, cf. \cite[Section~2.2]{FuKr2018On-the-geometry-of}. 
Similarly we get a countable disjoint union of open and closed subsets of $\H$ in the form $\H = \bigcup_{i \in I} h_i (\H\cap\G^\circ)$.

Assume that $Z \cdot \H = \G$ for some algebraic subset $Z \subset \G$. Then $Z = \bigcup_{j\in J_0} Z_j g_j$ for a finite subset $J_0 \subset J$ and algebraic subsets $Z_j \subset \G^\circ$. It follows that 
$$
\G=Z\cdot \H = \bigcup_{j\in J_0} Z_j g_j \cdot \H \text{ \ and \ } Z_jg_j\cdot \H = \bigcup_{i \in I} Z_j g_j h_i \cdot (\H\cap \G^\circ).
$$
Each $Z_j g_j h_i \cdot (\H\cap\G^\circ)$ belongs to a connected component of $\G$, and $Z_j g_j h_i \cdot (\H \cap \G^\circ) \subset \G^\circ$ if and only if $g_j h_i \in \G^\circ$. Thus
$$
\G^\circ =\bigcup_{j\in J_0, \, g_jh_i\in \G^\circ} Z_j g_j h_i \cdot (\H \cap \G^\circ) = 
(\bigcup_{j\in J_0, \, g_jh_i\in \G^\circ} Z_j g_j h_i) \cdot (\H \cap \G^\circ).
$$
By construction, $\bigcup_{j\in J_0, \, g_jh_i\in \G^\circ} Z_j g_j h_i$ is a finite union of algebraic subsets and thus contained in $\G^\circ_k$ for some $k$ where $\G^\circ = \bigcup_k \G^\circ_k$ is an admissible filtration.
\ps
(2) 
Since $\G^\circ$ is connected we can assume that the $\G^\circ_k$ are irreducible (\cite[Prop.~1.6.3 and Prop.~2.2.1(2)]{FuKr2018On-the-geometry-of}) and so $Z' \cdot (\H \cap \G^\circ) = \G^\circ$ with an irreducible algebraic subset $Z' \subset \G^\circ$. Now Lemma~\ref{finite-codimension.lem} and Proposition~\ref{LG.prop}(c) imply that $L_{\H} = L_{\H \cap \G^\circ}$ has finite codimension in $L_\G = L_{\G^\circ}$.
\end{proof}

\textbf{Acknowledgement.}
We would like to thank the referee for his very careful reading of the paper and his comments which helped to improve the paper considerably. 
Our thanks also go to \name{Alexander Perepechko} for his helpful remarks.  
A part of the paper was written during a stay of the second author at the Max Planck Institute of Mathematics in Bonn. 
He thanks this institution for its support and excellent working conditions.

\par\bigskip
\renewcommand{\MR}[1]{}
\bigskip

\end{document}